\documentclass[12pt]{amsart}
\usepackage{amsmath}
\usepackage{geometry,amsthm,graphics,tabularx,amssymb,shapepar,eucal,verbatim}
\usepackage{latexsym,amsfonts,amssymb,amsmath,amsxtra,bbm,color,mathrsfs}
\usepackage{amscd}
\usepackage{graphicx}
\usepackage[colorlinks=true, allcolors=blue]{hyperref}
\usepackage{pdfsync}
\usepackage[all]{xypic}

\usepackage{braket}
\newcommand{\BC}{{\mathbb {C}}}

\newcommand{\CP}{{\mathcal {P}}}

\newcommand{\RH}{{\mathrm {H}}}

\newcommand{\RM}{{\mathrm {M}}}

\newcommand{\Ad}{{\mathrm{Ad}}}

\newcommand{\Gal}{{\mathrm{Gal}}}
\newcommand{\GL}{{\mathrm{GL}}}

\newcommand{\Hom}{{\mathrm{Hom}}}

\newcommand{\Ind}{{\mathrm{Ind}}}

\newcommand{\N}{{\mathrm{N}}}

\newcommand{\Res}{{\mathrm{Res}}}

\newcommand{\SL}{{\mathrm{SL}}}

\newcommand{\Span}{{\mathrm{Span}}}
\newcommand{\tr}{{\mathrm{tr}}}

\newcommand{\diag}{\operatorname{diag}}

\newcommand{\absk}[1]{\left\lvert#1\right\rvert_k}

\newcommand{\be}{\begin {equation}}
\newcommand{\ee}{\end {equation}}
\newcommand{\bee}{\begin {equation*}}
\newcommand{\eee}{\end {equation*}}

\theoremstyle{Theorem}

\theoremstyle{Theorem}

\theoremstyle{Theorem}

\theoremstyle{Theorem}

\theoremstyle{Plain}

\theoremstyle{remark}

\theoremstyle{remark}

\theoremstyle{Definition}
\newtheorem{dfn}{Definition}[section]

\newtheorem{cord}[dfn]{Corollary}
\newtheorem{prpd}[dfn]{Proposition}
\newtheorem{thmd}[dfn]{Theorem}
\newtheorem{lemd}[dfn]{Lemma}

\newtheorem{remarkd}[dfn]{Remark}

\usepackage{tikz-cd}
\usepackage{extarrows}
\usepackage{bm}
\usepackage{pifont}
\usepackage{float}

\begin{document}

\title[Trilinear and Ginzburg-Rallis models]{On the trilinear and Ginzburg-Rallis models}
\author[X. Wang]{Xinrui Wang}
\address{School of Mathematical Sciences, Zhejiang University, Hangzhou, Zhejiang, P. R. China}
\email{12435014@zju.edu.cn}

\subjclass[2020]{22E50, 43A80}
\keywords{Ginzburg-Rallis models, trilinear models, parabolic inductions}

\begin{abstract}
    Let $k$ be a non-archimedean local field of characteristic zero. We give sufficient conditions under which the Ginzburg-Rallis models of the induced representations of $\GL_6(k)$  from a parabolic subgroup of type $[2^3]$ are isomorphic to the  trilinear models of the inducing data. We also give nonvanishing criterion for these
    trilinear models and Ginzburg-Rallis models.
\end{abstract}

\maketitle
\tableofcontents

\section{Introduction and the main results}

This paper is devoted to the study of local Ginzburg-Rallis models for $\GL_6$, which is the Whittaker induction of the trilinear models for $\GL_2^3$ in the framework of the 
relative Langlands program (\cite{BZSV}). 

We first recall the definition of the Ginzburg-Rallis model. Let $k$ be a local field of characteristic zero with normalized absolute value $\absk{\cdot}$, and let $ G := \GL_6(k) $. For a positive integer $n$, let $\RM_n(k)$ be the space of $n\times n$ matrices over $k$. The Ginzburg-Rallis subgroup of $G$ is
\[
    H := \left\{
    \begin{bmatrix}
        g & gX & gZ\\
          & g  & gY\\
          &    & g
    \end{bmatrix}
    \, \middle|\, g \in \GL_2(k), X, Y, Z \in \RM_2(k) \right\} = H_0 \ltimes N,
\]
where $ H_0 := \{ \diag(g, g, g) \mid g \in \GL_2(k) \} $ is the image of the diagonal embedding of $\GL_2(k)$ into $G$,  and 
\[
    N := \left\{
    \begin{bmatrix}
        1_2 & X   & Z  \\
            & 1_2 & Y  \\
            &     & 1_2
    \end{bmatrix}
    \, \middle| \, X, Y, Z \in \RM_2(k) \right\}.
\]
Here and henceforth, $1_n$ denotes the $ n \times n $ identity matrix.

Fix a character $ \eta: k^\times \to \mathbb{C}^\times $ and a nontrivial additive character $ \psi: k \to \mathbb{C} $. Then we have the following character 
\[
    \varphi_H: H\to\BC^\times,\quad h =
        \begin{bmatrix}
            g & gX & gZ\\
              & g  & gY\\
              &    & g
        \end{bmatrix}
        \mapsto \eta_H(h) \psi_H(h): = \eta(\det g) \psi(\tr\,X + \tr\,Y).
\]

Let $\pi$ be an admissible smooth representation of $G$, which is of Casselman-Wallach type if $k$ is archimedean.  The Ginzburg-Rallis model of $\pi$ is the space
\[
    \Hom_H \left( \pi, \varphi_H \right),
\]
which is at most one-dimensional if $\pi$ is irreducible by \cite{JSZ, JLX}. In the case that $\pi$ is tempered, it is studied in \cite{W} using local trace formula.

From now on we assume that $k$ is non-archimedean. In this paper we will study the  Ginzburg-Rallis models of certain induced representations of $G$. Let $P$ be the following block upper triangular 
parabolic subgroup of $G$ of type $[2^3]$:
\[
    P = \left\{
    \begin{bmatrix}
        g_1 & X   & Z  \\
            & g_2 & Y  \\
            &     & g_3
    \end{bmatrix}
    \, \middle| \, g_1, g_2, g_3 \in \GL_2(k), X, Y, Z \in \RM_2(k) \right\} = MN,
\]
where $ M = \{ \diag(g_1, g_2, g_3) \mid g_i \in \GL_2(k), i = 1, 2, 3 \} $.

For a topological group $A$, let $\delta_A$ denote its modular character. One computes that for $ g_1, g_2, g_3 \in \GL_2(k) $ and $ X, Y, Z \in \mathrm{M}_2(k) $,
\[
    \delta_P \left(
    \begin{bmatrix}
        g_1 & X   & Z  \\
            & g_2 & Y  \\
            &     & g_3
    \end{bmatrix}
    \right) = \absk{ \det \left( g_1 g_3^{-1} \right) }^4.
\]

Let $ \tau_i, 1 \le i \le 3 $ be irreducible admissible smooth representations of $\GL_2(k)$, and $ \tau := \tau_1 \otimes \tau_2 \otimes \tau_3 $ 
which is a representation of $M$. Extend $\tau$ to a representation of $P$ trivially across $N$ and also denote it by $\tau$. Let $V_\tau$ be the representation space of $\tau$. Consider the induced representation
\[
    \pi_\tau := \Ind^G_P( \tau ) \quad \text{(normalized smooth induction)},
\]
consisting of all locally constant functions $ f: G \rightarrow V_\tau $ satisfying
\[
    f(pg) = \delta_P^{1/2}(p) \tau(p) f(g),\quad \text{for all } p \in P, g \in G.
\]
We will study the Ginzburg-Rallis model $\Hom_H\left(\pi_\tau,\varphi_H\right)$ and compare it with the following trilinear model (see \cite{Prasad})
\[
    \Hom_{H_0} ( \tau, \eta_H ).
\]
Our results will be complementary to the works mentioned above.

Let us recall the classification of irreducible admissible smooth representations of $ \mathrm{GL}_2(k) $ (see \cite[9.11]{B-H}). For a positive integer $n$, denote by $B_n$ the upper triangular Borel subgroup of $\GL_n(k)$.  An irreducible admissible smooth representation $\sigma$ of $\GL_2(k)$ is one of the following:
\begin{itemize}
    \item $\sigma$ is supercuspidal.
    \item $ \sigma = \Ind^{ \GL_2(k) }_{B_2}( \chi_1 \otimes \chi_2 ) $ (normalized smooth induction), where $\chi_1$ and $\chi_2$ are characters of $k^\times$ with $ \chi_1 \chi_2^{-1} \ne \absk{ \cdot }^{ \pm1 }$.
    \item $\sigma$ is the unique irreducible quotient of the representation
    \[
        \Ind^{ \GL_2(k) }_{B_2} \left( \chi \absk{ \cdot }^{-1/2} \otimes \chi \absk{ \cdot }^{1/2} \right) = ( \chi \circ \det ) \otimes \Ind^{ \GL_2(k) }_{B_2} \delta_{B_2}^{-1/2}
    \]
    for some character $\chi$ of $k^\times$, which is called a Steinberg representation and denoted by $ \mathrm{St}( \chi ) $.
    \item $ \sigma = \chi \circ \det $ is the one-dimensional subrepresentation of the representation $ \Ind^{ \GL_2(k) }_{B_2} \left( \chi \absk{ \cdot }^{-1/2} \otimes \chi \absk{ \cdot }^{1/2} \right) $ for some character $\chi$ of $k^\times$.
\end{itemize}
Here, $ \delta_{B_2} = \absk{ \cdot } \otimes \absk{ \cdot }^{-1} $ is the modular character of $B_2$. Infinitely dimensional irreducible smooth representations of $\GL_2(k)$ that are isomorphic to the representations of the form $ \Ind^{ \GL_2(k) }_{B_2}( \chi_1 \otimes \chi_2 ) $ are called principal series representations and others are essential discrete series representations.   

Let $ \omega_{ \tau_i } $ denote the central character of the representation $\tau_i$ for $ i = 1, 2, 3 $. For a smooth representation $V$ of finite length of a totally disconnected group, we denote by $ V_\mathrm{ss} $ the semi-simplification of $V$, i.e., if $V$ admits a filtration $ 0 = V_0 \subseteq V_1 \subseteq \cdots \subseteq V_n = V $ with each $ V_{i + 1} / V_i $ irreducible, then $ V_\mathrm{ss} := \bigoplus_{i = 0}^{n - 1} V_{i + 1} / V_i $. Let 
\[ 
    T_2 = \{ \diag( x_1, x_2 ) \mid x_1, x_2 \in k^\times \} \text{ and } N_2 = \left\{
    \begin{bmatrix}
        1 & x\\
          & 1
    \end{bmatrix}
    \, \middle| \, x \in k \right\}. 
\]
For each smooth representation $V$ of $\GL_2(k)$, define its Jacquet module by 
\[
    J(V) := V / \Span_{ \mathbb{C} } \{ n \cdot v - v \mid n \in N_2, v \in V \}.
\]
It naturally carries a $T_2$-representation structure. 

\begin{dfn}
    Let $\tau_1$, $\tau_2$ and $\tau_3$ be irreducible admissible smooth representations of $\GL_2(k)$. A representation $ \tau = \tau_1 \otimes \tau_2 \otimes \tau_3 $ of $M$ is called \textbf{good} if for each $ i = 1, 2, 3 $ and $ j, l \in \{ 1, 2, 3 \} \setminus \{ i \} $ with $ j < l $, 
    \[
        \Hom_{T_2} \left( J( \tau_i )_\mathrm{ss}, ( \eta \circ \det ) \otimes \left( \omega_{ \tau_j }^{-1} \otimes \omega_{ \tau_l }^{-1} \right) \right) = 0.
    \]   
\end{dfn}

The first main result of this paper is as follows.

\begin{thmd}
    \label{mainthm1}
    Let $ \tau = \tau_1 \otimes \tau_2 \otimes \tau_3 $ be a good representation of $M$. Then it holds that
        \[
            \Hom_H \left( \pi_\tau, \varphi_H \right) \cong \Hom_{H_0} \left( \tau, \eta_H \right).
        \]
\end{thmd}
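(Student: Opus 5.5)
We will use Mackey theory (the Bernstein–Zelevinsky geometric lemma) for the restriction of $\pi_\tau$ to $H$, via the double coset decomposition $P\backslash G/H$. Since $H \supseteq N$ and $H_0\subset M$, the open orbit should correspond to the identity-type coset $PH$ twisted appropriately. More precisely, $P\backslash G$ is the flag variety of flags $0\subset V_2\subset V_4\subset k^6$ of type $[2^3]$, and $H$ acts on it. We first classify the $H$-orbits. Since $N\subset P$ is the unipotent radical, $H$-orbits on $P\backslash G$ are the $H_0$-orbits on $P\backslash G/N$. We stratify by relative position with respect to $P$ itself (Bruhat cells $P\backslash PwP$, $w\in S_3$ acting on blocks, together with finer data), and then refine each Bruhat cell by the residual $H_0\cong \GL_2(k)$-action. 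This yields a finite $H$-stable filtration of $P\backslash G$ by locally closed pieces. The main stratum is the closed orbit $P\cdot 1\cdot H = P$ (the point $P$ is fixed by $H$, since $H\subset P$).

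Its contribution is
\[
\Hom_H(\delta_P^{1/2}\tau|_H,\varphi_H)=\Hom_{H_0}(\tau,\eta_H)\cdot[\psi_H|_N \text{ trivial?}].
\]
Here $\psi_H$ is nontrivial on $N$ while $\tau$ is $N$-trivial, so this term vanishes. Hence the relevant orbit is instead the one on which $\psi_H$ is generic: the open cell $Pw_0H$, with $w_0$ the long block Weyl element. On this cell, $w_0 N w_0^{-1}\cap P$ is trivial on the $X,Y$ part, and the stabilizer in $H$ of the point $Pw_0$ is $H_0$ together with the part of $N$ meeting $w_0^{-1}Pw_0$. We compute directly that the stalk contribution is
\[
\Hom_{H_0}\left(\tau^{w_0},\eta_H\right)\cong \Hom_{H_0}(\tau,\eta_H),
\]
the last step because the trilinear form is symmetric under permuting factors. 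The modular characters cancel since $\delta_P^{1/2}$ is trivial on $H_0$.

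Formally, we filter $\pi_\tau|_H$ by compactly supported sections over the $H$-stable strata, $0\subset \pi_{\mathrm{open}}\subset\cdots\subset\pi_\tau$. We then show:
\begin{itemize}
\item[(a)] the open stratum gives $\Hom_H(\pi_{\mathrm{open}},\varphi_H)\cong\Hom_{H_0}(\tau,\eta_H)$ by Frobenius reciprocity for compact induction;
\item[(b)] every other stratum $S$ has $\Hom_H(\text{sections on }S,\varphi_H)=0$, and also $\mathrm{Ext}^1$-vanishing or a dimension argument giving the extension of functionals.
\end{itemize}
For (b) on a stratum with representative $w$ and stabilizer $H_w=H\cap w^{-1}Pw$, Frobenius reciprocity reduces to $\Hom_{H_w}(\tau^w\delta,\varphi_H\delta')$. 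Either $\psi_H$ is nontrivial on $H_w\cap w^{-1}Nw$, giving vanishing, or the unipotent part of $H_w$ maps into the unipotent radical $N_2$ of a Borel of some factor $\GL_2$. In the latter case the functional factors through the Jacquet module $J(\tau_i)$, and the torus of $H_0$ acts on the remaining factors $\tau_j,\tau_l$ through their central characters (the relevant embedded torus lands in their centers). Tracking the characters shows that the condition becomes exactly a nonzero element of $\Hom_{T_2}(J(\tau_i)_{\mathrm{ss}},(\eta\circ\det)\otimes(\omega_{\tau_j}^{-1}\otimes\omega_{\tau_l}^{-1}))$, which the goodness hypothesis kills; passing to $J(\tau_i)_{\mathrm{ss}}$ is harmless because vanishing of Hom on each constituent implies vanishing on $J(\tau_i)$. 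To get an isomorphism rather than an injection, we also need $\mathrm{Ext}^1_H$ vanishing for the closed strata. We get this because the same central/torus character mismatch, applied to a central torus element acting by distinct scalars on each finite-length constituent, forces all $\mathrm{Ext}^i$ to vanish (the standard argument via a central element of $T_2$ acting on $J(\tau_i)$ by generalized eigenvalues different from the target).

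\textbf{Main obstacle.} We expect the hardest part to be the orbit classification and stabilizer computation. This means identifying exactly which strata have $\psi_H$ trivial on the unipotent part of the stabilizer, and verifying that the remaining torus lands as $(\mathrm{diag}(x_1,x_2)$ on $\tau_i$, scalars on $\tau_j,\tau_l)$ with precisely the twist, including the modular characters, that yields the character in the definition of good. We would first do this via explicit representatives in $S_6$ modulo $S_2^3$ and verify the modular factors cancel, matching the $\delta_{B_2}^{1/2}$ normalization in $J$.
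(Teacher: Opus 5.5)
Your proposal is correct and follows the paper's argument. You filter $\pi_\tau|_H$ by compactly supported sections over the $H$-orbits in $P\backslash G$, read off $\Hom_{H_0}(\tau,\eta_H)$ from the open orbit $P(15)(26)H$ with stabilizer $H_0$, and kill every other orbit in all degrees. That vanishing comes either from $\psi_H$ being nontrivial on $N_\gamma$, or, for the three surviving orbits $(152643)$, $(134625)$, $(13)(25)(46)$, from goodness via $J(\tau_i)$ and the diagonal torus of $H_0$; the paper uses homology, Hochschild--Serre and the K\"unneth formula on $T_2$ instead of your $\mathrm{Ext}$/central-element argument, and the two are interchangeable here.

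Two small corrections for when you do the computation. The strata must come from all $21$ double cosets $W_P\backslash W/W_P$, not only block permutations in $S_3$, because those three orbits lie in non-block Bruhat cells. On them the unipotent part of the stabilizer projects onto $N_2$ in all three factors with $\psi$ generic on two, so twisted Jacquet modules $J_\psi(\tau_j)$, $J_\psi(\tau_l)$ also appear; this does not affect the vanishing.
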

    
\begin{remarkd}
    For tempered $\tau$, the statement was proved in \cite{W} without requiring $\tau$ to be good.
\end{remarkd}

We will prove the following criterion for good representations.

\begin{prpd}
    \label{goodcr}
    A representation $ \tau = \tau_1 \otimes \tau_2 \otimes \tau_3 $ of $M$ is good if and only if the following conditions are satisfied: 
    \begin{itemize}
        \item[(G1)] For each $ i = 1, 2, 3 $, if $ \tau_i = \chi_i \circ \det $ for some character $\chi_i$ of $k^\times$, then
        \[
            \chi_i \otimes \chi_i \ne ( \eta \circ \det ) \otimes \left( \omega_{ \tau_j }^{-1} \otimes \omega_{ \tau_l }^{-1} \right)
        \]
        where $ j, l \in \{ 1, 2, 3 \} \setminus \{ i \} $ with $ j < l $;
        \item[(G2)] For each $ i = 1, 2, 3 $, if $ \tau_i = \mathrm{St} \left( \chi_i \right) $ for some character $\chi_i$ of $k^\times$, then
        \[
            \chi_i \otimes \chi_i \ne ( \eta \circ \det ) \otimes \left( \omega_{ \tau_j }^{-1} \otimes \omega_{ \tau_l }^{-1} \right) \otimes \delta_{B_2}^{-1},
        \]
        where $ j, l \in \{ 1, 2, 3 \} \setminus \{ i \} $ with $ j < l $;
        \item[(G3)] For each $ i = 1, 2, 3 $, if $ \tau_i = \mathrm{Ind}_{B_2}^{ \GL_2(k) } ( \chi_{ij} \otimes \chi_{il} ) $ for some characters $ \chi_{ij} $ and $ \chi_{il} $ of $k^\times$, where $ j, l \in \{ 1, 2, 3 \} \setminus \{ i \} $ with $ j < l $, then
        \[
            \chi_{ij} \otimes \chi_{il} \ne  ( \eta \circ \det ) \otimes \left( \omega_{ \tau_j }^{-1} \otimes \omega_{ \tau_l }^{-1} \right) \otimes \delta_{B_2}^{-1/2}
        \]
        and
        \[
            \chi_{ij} \otimes \chi_{il} \ne ( \eta \circ \det ) \otimes \left( \omega_{ \tau_l }^{-1} \otimes \omega_{ \tau_j }^{-1} \right) \otimes \delta_{B_2}^{1/2}.
        \]
    \end{itemize}
\end{prpd}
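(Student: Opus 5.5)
The plan is to compute $J(\tau_i)_{\mathrm{ss}}$ explicitly for each of the four types of irreducible representations in the classification recalled above. Goodness is then a condition on each $\tau_i$ separately: for fixed $i$, the target character $\mu_i := (\eta\circ\det)\otimes(\omega_{\tau_j}^{-1}\otimes\omega_{\tau_l}^{-1})$ of $T_2$ must not occur as a constituent of the semisimple $T_2$-module $J(\tau_i)_{\mathrm{ss}}$. Here $(\eta\circ\det)$ restricted to $T_2$ is $\eta\otimes\eta$. Since $J(\tau_i)$ has finite length and $J(\tau_i)_{\mathrm{ss}}$ is a finite direct sum of characters of $T_2$, we have
\[
\Hom_{T_2}\bigl(J(\tau_i)_{\mathrm{ss}},\mu_i\bigr)=0 \iff \mu_i \text{ is not a constituent of } J(\tau_i)_{\mathrm{ss}}.
\]
The proposition therefore reduces to listing these constituents, with the convention that $J$ is the unnormalized Jacquet module as defined in the paper.

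The computations, case by case, are as follows.
\begin{itemize}
\item If $\tau_i$ is supercuspidal, then $J(\tau_i)=0$ by definition (or the standard characterization). So there is no condition, consistent with (G1)--(G3) imposing nothing in this case.
\item If $\tau_i=\chi_i\circ\det$, then $N_2$ acts trivially and $J(\tau_i)=\chi_i\otimes\chi_i$. This gives (G1).
\item For the principal series $\Ind_{B_2}^{\GL_2(k)}(\chi_1\otimes\chi_2)$, the geometric lemma (Bruhat filtration with the two cells $B_2$ and $B_2wB_2$) gives the semisimplified unnormalized Jacquet module
\[
(\chi_1\otimes\chi_2)\delta_{B_2}^{1/2}\;\oplus\;(\chi_2\otimes\chi_1)\delta_{B_2}^{1/2}.
\]
\end{itemize}

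For the principal series, write $\tau_i=\Ind(\chi_{ij}\otimes\chi_{il})$. The non-occurrence of $\mu_i$ as $(\chi_{ij}\otimes\chi_{il})\delta_{B_2}^{1/2}$ is exactly the first inequality in (G3), using $\delta_{B_2}^{1/2}=|\cdot|^{1/2}\otimes|\cdot|^{-1/2}$. The non-occurrence of $\mu_i$ as $(\chi_{il}\otimes\chi_{ij})\delta_{B_2}^{1/2}$ is, after swapping the two coordinates of $T_2$, the condition
\[
\chi_{ij}\otimes\chi_{il}\ne(\eta\circ\det)\otimes(\omega_{\tau_l}^{-1}\otimes\omega_{\tau_j}^{-1})\otimes\delta_{B_2}^{1/2}.
\]
The swap sends $\delta_{B_2}^{-1/2}$ to $\delta_{B_2}^{1/2}$, and this is the second inequality. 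Note that the labelling $\chi_{ij},\chi_{il}$ is arbitrary, since $\Ind(\chi_1\otimes\chi_2)\cong\Ind(\chi_2\otimes\chi_1)$ when irreducible. The condition as stated is symmetric under this relabelling, and I will check that explicitly.

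For $\mathrm{St}(\chi)$, use the exact sequence
\[
0\to\chi\circ\det\to\Ind\bigl(\chi|\cdot|^{-1/2}\otimes\chi|\cdot|^{1/2}\bigr)\to\mathrm{St}(\chi)\to0
\]
together with the exactness of $J$. The semisimplified Jacquet module of the induced representation is
\[
(\chi\otimes\chi)\;\oplus\;(\chi\otimes\chi)\delta_{B_2}.
\]
Indeed, $\chi|\cdot|^{-1/2}\otimes\chi|\cdot|^{1/2}$ times $\delta_{B_2}^{1/2}$ gives $\chi\otimes\chi$, and the swapped term gives $(\chi\otimes\chi)\delta_{B_2}$. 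Removing $J(\chi\circ\det)=\chi\otimes\chi$ leaves
\[
J(\mathrm{St}(\chi))=(\chi\otimes\chi)\delta_{B_2}.
\]
So the condition is $\chi_i\otimes\chi_i\ne\mu_i\delta_{B_2}^{-1}$, which is (G2). Finally, a representation that is a principal series in the sense of being isomorphic to an irreducible induced representation is covered by (G3), and the four cases are exhaustive and disjoint. This finishes the equivalence.

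The main obstacle is getting the normalizations exactly right: the unnormalized versus normalized Jacquet module, the sign of $\delta_{B_2}$ in the Bruhat computation, and the coordinate swap. The careful step is the geometric-lemma computation for $\Ind_{B_2}^{\GL_2(k)}(\chi_1\otimes\chi_2)$. I will verify it directly by the standard argument. The open cell $B_2wN_2$ contributes $C_c^\infty(N_2)$-valued functions whose $N_2$-coinvariants are given by integration over $N_2$, and the torus acts on the resulting functional with twist $(\chi_2\otimes\chi_1)\delta_{B_2}^{1/2}$. The closed cell is evaluation at $1$, giving $(\chi_1\otimes\chi_2)\delta_{B_2}^{1/2}$.
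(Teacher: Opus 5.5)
Your proposal is correct and follows the paper's route. In both, you list $J(\tau_i)_{\mathrm{ss}}$ in each of the four cases and reduce the vanishing of the $\Hom_{T_2}$-space to checking that $(\eta\circ\det)\otimes(\omega_{\tau_j}^{-1}\otimes\omega_{\tau_l}^{-1})$ is not one of the constituent characters; the paper simply quotes these Jacquet modules from the Restriction--Induction Lemma of Bushnell--Henniart, whereas you derive them yourself from the Bruhat filtration and the Steinberg exact sequence, with the same normalizations.
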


Our next three theorems give the nonvanishing criterion for the trilinear model of a good representation $\tau$ of $M$, which implies the nonvanishing of the Ginzburg-Rallis model 
of $ \pi_\tau $ in view of Theorem \ref{mainthm1}. The criterion consists of the following three cases:
\begin{itemize}
    \item when not all three representations are the essential discrete series, it is given by Corollary \ref{mainthm2};
    \item when all three representations are essential discrete series and at least one of them is a special representation,  it is given by Corollary \ref{mainthm4};
    \item when all three representations are supercuspidal,  it is given by Theorem \ref{mainthm3}.
\end{itemize}

\begin{cord}
    \label{mainthm2}
    Let $\tau_1$, $\tau_2$, and $\tau_3$ be infinite-dimensional irreducible admissible smooth representations of $\GL_2(k)$ such that the product of their central characters equals $\eta^2$. Suppose that not all of $\tau_1$, $\tau_2$, and $\tau_3$ are essential discrete series representations. If $ \tau = \tau_1 \otimes \tau_2 \otimes \tau_3 $ is good, then
    \[
        \dim \Hom_H \left( \pi_\tau, \varphi_H \right) = \dim \Hom_{H_0} ( \tau, \eta_H ) = 1.
    \]
\end{cord}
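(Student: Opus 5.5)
By Theorem \ref{mainthm1}, since $\tau$ is good we already have $\Hom_H(\pi_\tau,\varphi_H)\cong \Hom_{H_0}(\tau,\eta_H)$. So the plan is to show that the trilinear space $\Hom_{\GL_2(k)}(\tau_1\otimes\tau_2\otimes\tau_3,\eta\circ\det)$ is exactly one-dimensional. We twist to reduce to Prasad's setting: put $\tau_3' := \tau_3\otimes(\eta^{-1}\circ\det)$. Then
\[
\Hom_{H_0}(\tau,\eta_H)=\Hom_{\GL_2(k)}(\tau_1\otimes\tau_2\otimes\tau_3',\mathbb{C}).
\]
The product of the central characters of $\tau_1,\tau_2,\tau_3'$ is $\eta^2\cdot\eta^{-2}=1$, which is the necessary central character condition in Prasad's theory \cite{Prasad}. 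All three representations remain infinite-dimensional irreducible, and twisting preserves being an essential discrete series. Hence at least one of $\tau_1,\tau_2,\tau_3'$ is a principal series.

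The main step is to invoke Prasad's theorem \cite{Prasad}: if $\sigma_1,\sigma_2,\sigma_3$ are infinite-dimensional irreducible with trivial product of central characters, and not all of them are discrete series, then the space of invariant trilinear forms is one-dimensional. The $\varepsilon$-factor dichotomy is only needed when all three are discrete series. If one prefers to argue directly, the strategy is as follows. Permute so that the principal series is $\sigma_1=\Ind_{B_2}^{\GL_2(k)}(\chi_1\otimes\chi_2)$. By Frobenius reciprocity,
\[
\Hom_{\GL_2}(\sigma_1\otimes\sigma_2\otimes\sigma_3,\mathbb{C})\cong \Hom_{B_2}\bigl((\sigma_2\otimes\sigma_3)|_{B_2},\ \chi_1^{-1}\otimes\chi_2^{-1}\cdot\delta_{B_2}^{1/2}\bigr).
\]
We then analyze $\sigma_2\otimes\sigma_3$ restricted to $B_2$ via the orbit (Mackey) filtration on $\mathbb{P}^1\times\mathbb{P}^1$, or equivalently via Kirillov models. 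There is an open orbit, on which the stabilizer is the center times a torus, and which contributes exactly one functional by the central character condition. The closed orbit contributes terms controlled by Jacquet modules.

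The main obstacle is the contribution from the small, Jacquet-module part. It could produce extra $\Hom$'s, or non-split extensions could kill the generic functional, precisely when an exponent of $J(\sigma_i)$ matches a specific character built from the other central characters. This is the kind of coincidence the good condition is designed to exclude, via Definition-level vanishing $\Hom_{T_2}(J(\tau_i)_{\mathrm{ss}},(\eta\circ\det)\otimes(\omega_{\tau_j}^{-1}\otimes\omega_{\tau_l}^{-1}))=0$. Prasad's argument shows that for infinite-dimensional representations the answer is one-dimensional regardless, using an $\mathrm{Ext}^1$ vanishing for the boundary pieces. So I would cite that result rather than redo it. I would note that goodness is used only through Theorem \ref{mainthm1}, together with a check that Prasad's hypotheses (irreducible, infinite-dimensional, trivial central character product) are met after the twist.
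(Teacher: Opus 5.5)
Your proposal is correct and matches the paper's proof. Theorem \ref{mainthm1}, which uses goodness, identifies $\Hom_H(\pi_\tau,\varphi_H)$ with $\Hom_{H_0}(\tau,\eta_H)$, and Prasad's Theorem 1.2 then gives dimension one when not all $\tau_i$ are essential discrete series; your twist of $\tau_3$ by $\eta^{-1}\circ\det$ only makes explicit the reduction to Prasad's trivial-central-character normalization, which the paper absorbs by quoting his theorem with $\eta$, and the sketched direct Mackey argument is not needed.
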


\begin{cord}
    \label{mainthm4}
    Assume that the residue characteristic of $k$ is odd. Let $\tau_1$, $\tau_2$, and $\tau_3$ be infinite-dimensional essential discrete series representations of $\GL_2(k)$ such that the product of their central characters equals $\eta^2$. Suppose that not all of $\tau_1$, $\tau_2$, and $\tau_3$ are supercuspidal representations. Assume that $ \tau = \tau_1 \otimes \tau_2 \otimes \tau_3 $ is good. If at least one of the conditions are satisfies:
    \begin{itemize}
        \item There is exactly one Steinberg representation, say $ \tau_1 = \mathrm{St}( \chi_1 ) $ for some character $\chi_1$ of $k^\times$. $\tau_2$ and $\tau_3$ are supercuspidal satisfying
        \[
            ( \chi_1 \eta^{-1} \circ \det ) \otimes \tau_2 \ncong \tau_3^\vee.
        \]
        \item There are exactly two Steinberg representations among $\tau_1$, $\tau_2$, and $\tau_3$. 
    \end{itemize}
    Then
    \[
        \dim \Hom_H \left( \pi_\tau, \varphi_H \right) = \dim \Hom_{H_0} ( \tau, \eta_H ) = 1.
    \]
\end{cord}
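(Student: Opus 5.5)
By Theorem \ref{mainthm1}, since $\tau$ is good, the first equality $\dim \Hom_H(\pi_\tau,\varphi_H)=\dim\Hom_{H_0}(\tau,\eta_H)$ is immediate. The remaining task is to show that the trilinear form is nonzero in the two listed cases. Since it is at most one-dimensional, this gives dimension one.

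We reduce to Prasad's theorem. Twisting $\tau_1$ by $\eta^{-1}\circ\det$ turns $\Hom_{H_0}(\tau,\eta_H)$ into $\Hom_{\GL_2(k)}(\tau_1'\otimes\tau_2\otimes\tau_3,\BC)$ with $\tau_1'=(\eta^{-1}\circ\det)\otimes\tau_1$. The hypothesis that the product of central characters is $\eta^2$ becomes $\omega_{\tau_1'}\omega_{\tau_2}\omega_{\tau_3}=1$, which is the standard setting of \cite{Prasad}. The twist preserves all types: a Steinberg representation stays Steinberg, with $\mathrm{St}(\chi_1)$ becoming $\mathrm{St}(\chi_1\eta^{-1})$, and a supercuspidal stays supercuspidal. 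Prasad's dichotomy says that, for the split quaternion algebra, the invariant form exists if and only if $\varepsilon(1/2,\tau_1'\otimes\tau_2\otimes\tau_3)=+1$, up to the sign normalization. For a triple in which some representation is not a discrete series, this sign is always $+1$ (and an $\mathrm{Hom}$ is nonzero directly), which is why Corollary \ref{mainthm2} needs no condition. Here all three are discrete series, so we must either compute the root number or construct invariant forms by hand. The hypothesis of odd residue characteristic suggests the intended route is Prasad's explicit results, which were proved under that assumption for supercuspidals.

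In the second case, exactly two Steinberg representations, say $\tau_1'=\mathrm{St}(\mu_1)$ and $\tau_2=\mathrm{St}(\mu_2)$, realize $\tau_1'$ as a quotient of $\Ind_{B_2}^{\GL_2(k)}(\mu_1|\cdot|^{1/2}\otimes\mu_1|\cdot|^{-1/2})$, or dually as a subrepresentation. Then use Frobenius reciprocity: $\Hom_{\GL_2}(\Ind(\cdot)\otimes\tau_2\otimes\tau_3,\BC)$ becomes a $B_2$-invariant functional on $\tau_2\otimes\tau_3$. Apply the Mackey-type filtration of $\tau_2\otimes\tau_3|_{B_2}$ via Kirillov models, with $\tau_3$ supercuspidal. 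This shows the induced representation admits a $\GL_2$-invariant form. The next step is to show that the form does not factor only through the one-dimensional constituent, which is impossible here because $\chi\circ\det\otimes\tau_2\otimes\tau_3$ has no invariants when $\tau_3$ is supercuspidal. Hence the form descends to the Steinberg quotient. Equivalently, Prasad's formula gives $\varepsilon=+1$ whenever exactly two factors are Steinberg, so this case follows immediately.

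In the first case, $\tau_1'=\mathrm{St}(\chi_1\eta^{-1})$ and $\tau_2,\tau_3$ are supercuspidal. Prasad's theorem says $\Hom_{\GL_2}(\mathrm{St}(\mu)\otimes\tau_2\otimes\tau_3,\BC)\neq0$ exactly when $\tau_3^\vee\not\cong(\mu\circ\det)\otimes\tau_2$. The same argument shows why: the form on $\Ind(\mu|\cdot|^{1/2}\otimes\mu|\cdot|^{-1/2})\otimes\tau_2\otimes\tau_3$ exists, since the open orbit contributes a functional on $\tau_2\otimes\tau_3$ restricted to the torus or mirabolic, which is nonzero by Kirillov theory. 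It fails to descend to $\mathrm{St}$ precisely when the one-dimensional subquotient $\mu\circ\det$ carries it, that is, when $\Hom_{\GL_2}((\mu\circ\det)\otimes\tau_2\otimes\tau_3,\BC)\neq0$, i.e.\ $\tau_3\cong(\mu^{-1}\circ\det)\otimes\tau_2^\vee$. With $\mu=\chi_1\eta^{-1}$, this is exactly the excluded condition $(\chi_1\eta^{-1}\circ\det)\otimes\tau_2\cong\tau_3^\vee$.

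The main obstacle is the descent step in this first case. One must show that the space of invariant forms on the full induced representation is exactly one-dimensional, so that nonvanishing on the Steinberg quotient is equivalent to nonvanishing of $\Hom$ on the trivial-type constituent. I would first try to quote Prasad's theorem directly, and otherwise run the Mackey/Bernstein–Zelevinsky orbit analysis on $B_2\backslash\GL_2$.
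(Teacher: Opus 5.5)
Your proposal is correct, but its main argument takes a different route from the paper's.

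\textbf{The paper's route.} The paper proves the corollary by combining Theorem~\ref{mainthm1} with Proposition~\ref{specialcr}. That proposition twists to a trivial product of central characters and invokes Prasad's $\varepsilon$-dichotomy (Theorem~\ref{thm1.4}). It then reads off $\varepsilon_0=1$ from Prasad's explicit computations of $\varepsilon(\mathrm{sp}(2)\otimes\sigma_2'\otimes\sigma_3)$ and $\varepsilon(\mathrm{sp}(2)\otimes\mathrm{sp}(2)\otimes\sigma_3')$, his Propositions 8.5 and 8.6. This is essentially your ``quote Prasad'' fallback.

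\textbf{Your route.} You place the Steinberg factor inside $I=\Ind_{B_2}^{\GL_2(k)}\chi$ and use
$\Hom_{\GL_2(k)}(I\otimes\tau_2\otimes\tau_3,\BC)\cong\Hom_{B_2}(\tau_2\otimes\tau_3,\chi^{-1}\delta_{B_2}^{1/2})$.
One of the other two factors is supercuspidal, so by Kirillov theory its restriction to $B_2$ is $\mathrm{ind}_{ZN_2}^{B_2}(\omega\psi)$, with $Z$ the centre. Using $\omega_{\tau_1'}\omega_{\tau_2}\omega_{\tau_3}=1$, the space above is then identified with the Whittaker functionals of the remaining factor. Hence it is nonzero, indeed one-dimensional.

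The one-dimensional constituent $(\mu\circ\det)\otimes\tau_2\otimes\tau_3$ carries no invariant form. In the first case this follows from Schur's lemma and your hypothesis. In the second case it is automatic, since a supercuspidal is never the contragredient of a twisted Steinberg. So the nonzero form is nonzero on the Steinberg constituent.

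This works whether $\mathrm{St}(\mu)$ is realized as the quotient of $\Ind(\mu\absk{\cdot}^{-1/2}\otimes\mu\absk{\cdot}^{1/2})$ (the paper's convention) or as the subrepresentation of $\Ind(\mu\absk{\cdot}^{1/2}\otimes\mu\absk{\cdot}^{-1/2})$. So your mixing of the two realizations is harmless.

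\textbf{Minor points.} The ``main obstacle'' you name is not needed. Sufficiency only requires the form on $I\otimes\tau_2\otimes\tau_3$ to be nonzero, which you already have. One-dimensionality alone would not give the converse equivalence anyway, since that also needs surjectivity of a restriction map, i.e.\ control of an $\mathrm{Ext}^1$ term. Also, ``open orbit'' is a misnomer here: $\GL_2(k)$ acts transitively on $B_2\backslash\GL_2(k)$. The nonvanishing comes from Frobenius reciprocity together with the Kirillov model.

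\textbf{Trade-offs.} The paper's route is shorter and gives exact criteria, for example vanishing when the excluded isomorphism holds, or when all three factors are Steinberg. It does so at the cost of relying on the dichotomy and on $\varepsilon$-factor computations. Your route is elementary and self-contained, and never uses the odd residue characteristic hypothesis. It yields only the sufficiency direction, which is all the corollary asserts.
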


\begin{thmd}
    \label{mainthm3}
    Assume that the residue characteristic of $k$ is odd. Let $\tau_1$, $\tau_2$, and $\tau_3$ be  irreducible supercuspidal representations of $\GL_2(k)$ such that the product of their central characters equals $\eta^2$. Suppose that for each $i=1,2,3$, the representation $\tau_i$ corresponds to $ \mathrm{Ind}^k_{F_i} \xi_i $ under the Langlands correspondence, where $F_i$ is a separable quadratic extension of $k$ and $\xi_i$ is a character of $F_i^\times$. If any one of the conditions shown in Proposition $ \ref{cr1} $, Proposition $\ref{cr2}$, Proposition $\ref{cr3}$ or Proposition $\ref{cr4}$ holds, then
    \[
        \dim \Hom_H \left( \pi_\tau, \varphi_H \right) = \dim \Hom_{H_0} ( \tau, \eta_H ) = 1.
    \]
\end{thmd}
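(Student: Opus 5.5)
The argument combines Theorem \ref{mainthm1} with Prasad's theorem on trilinear forms for $\GL_2(k)$ \cite{Prasad}. Supercuspidal representations are never good-obstructed, since their Jacquet modules vanish. Hence goodness of $\tau$ holds automatically in the setting of Theorem \ref{mainthm3}: $J(\tau_i)=0$ for every $i$, so each $\Hom_{T_2}$ in the definition of good vanishes. Theorem \ref{mainthm1} then gives $\Hom_H(\pi_\tau,\varphi_H)\cong\Hom_{H_0}(\tau,\eta_H)$, and it suffices to show $\dim\Hom_{H_0}(\tau,\eta_H)=1$.

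Next we reduce to Prasad's setting by twisting. Pick a character $\mu$ of $k^\times$ and replace $\tau_1$ by $\tau_1\otimes(\eta^{-1}\circ\det)$. Then
\[
\Hom_{H_0}(\tau,\eta_H)=\Hom_{\GL_2(k)}\bigl(\tau_1\otimes\tau_2\otimes\tau_3,\eta\circ\det\bigr)\cong\Hom_{\GL_2(k)}\bigl(\tau_1'\otimes\tau_2\otimes\tau_3,\BC\bigr),
\]
with $\tau_1'=\tau_1\otimes(\eta^{-1}\circ\det)$. The product of the central characters of $\tau_1',\tau_2,\tau_3$ is $\eta^{-2}\cdot\eta^2=1$, which is exactly Prasad's hypothesis. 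Multiplicity at most one is known, by Prasad or \cite{JSZ}. Prasad's theorem states that for three supercuspidal representations (more generally, discrete series) the space is nonzero if and only if $\epsilon(1/2,\tau_1'\otimes\tau_2\otimes\tau_3)=+1$, with $\epsilon$ the triple product root number. Otherwise the form lives on the quaternion side, by the dichotomy with $D^\times$.

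So the remaining task is to compute the root number of $\sigma_1'\otimes\sigma_2\otimes\sigma_3$, where $\sigma_1'=\mathrm{Ind}_{F_1}^k(\xi_1\cdot\eta^{-1}\circ \N_{F_1/k})$ and $\sigma_i=\mathrm{Ind}_{F_i}^k\xi_i$ for $i=2,3$. This is an 8-dimensional induced representation. By Mackey theory, $\sigma_1\otimes\sigma_2\otimes\sigma_3$ decomposes into inductions of characters from the compositum fields, and the decomposition depends on whether the $F_i$ coincide or are pairwise distinct; I expect this to be the case division behind Propositions \ref{cr1}--\ref{cr4}.

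In each case the root number is a product of $\epsilon$-factors of induced characters. Using inductivity in degree zero, together with the Langlands constants $\lambda(F/k,\psi)$, it becomes a product of Tate root numbers of characters of quadratic or quartic extensions. Odd residue characteristic makes tame-ramification formulas and explicit $\lambda$-factors (Gauss sums, $\eta_{F/k}(-1)$) available. Each of Propositions \ref{cr1}--\ref{cr4}, which I take as established later, presumably states that its condition forces this product to be $+1$. Given that, Prasad's theorem yields nonvanishing, and the dimension is exactly one.

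The main obstacle is the root number computation itself. The cleanest approach is to show that each proposition's condition equals $\epsilon=+1$, or directly exhibits a nonzero invariant form, for example via a $\GL_2$-invariant pairing of $\tau_3^\vee$ with a constituent of $\tau_1'\otimes\tau_2$ restricted appropriately. With those in hand, the theorem follows formally from the chain above.
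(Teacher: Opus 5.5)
Your proposal is correct and follows the paper's route. Goodness is automatic because $J(\tau_i)=0$, so Theorem \ref{mainthm1} identifies the Ginzburg--Rallis model with the trilinear model; twisting $\tau_1$ by $\eta^{-1}\circ\det$ puts you in the setting of Theorem \ref{thm1.4}; and Propositions \ref{cr1}--\ref{cr4} are precisely the results that, under their respective conditions, give $\varepsilon(\sigma_1'\otimes\sigma_2\otimes\sigma_3)=1$, i.e.\ $\dim\Hom_{H_0}(\tau,\eta_H)=1$. Two small points: the odd residue characteristic hypothesis is needed already to invoke Theorem \ref{thm1.4} for three supercuspidals, not only for the $\lambda$-factor computations, and the auxiliary character $\mu$ you introduce plays no role.
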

    
\begin{remarkd}
    Propositions $\ref{cr1}$ and $\ref{cr3}$ are due to Prasad \cite{Prasad}, and Propositions $\ref{cr2}$ and $\ref{cr4}$ are our works in this paper. The proofs of Propositions $\ref{cr2}$ and $\ref{cr4}$ are inspired by the framework in Proposition $\ref{cr1}$. However, while Prasad converts the problem to trilinear forms on $D_k$ via Tunnell's theorem (see \cite[Theorem 8.3]{Prasad} and \cite{Tunnell}), we opt for a direct evaluation of the $\varepsilon$-factors with more fundamental results \cite[Theorems 1.2 and 1.4]{Tunnell}.
\end{remarkd}

\section{Proof of Theorem \ref{mainthm1}}

\subsection{$H$-orbits and stabilizers}

The elements of symmetric group $ \mathfrak{S}_6 $ can be realized as matrices in $\GL_6$. For each permutation $ s \in \mathfrak{S}_6 $, we associate the matrix $ \sum_{i = 1}^6 E_{ s(i), i } $, which we also denote by $s$. Here $ E_{ij} \in \mathrm{M}_6(k) $ is the matrix whose $(i, j)$-th entry is $1$ and all others are $0$. A straightforward computation yields that
\[
    s E_{ij} s^{-1} = E_{ s(i), s(j) }, \quad \text{ for } 1 \le i, j \le 6.
\]

Let $ \mathcal{P} = P \backslash G $. We first describe the orbits of the right action of $H$ on $\mathcal{P}$, equivalently, the double cosets $ P \backslash G/H $. Let $ W \cong \mathfrak{S}_6 $ be the Weyl group of $G$ and let $W_P$ be the Weyl subgroup of $P$, which is generated by the transpositions $(12)$, $(34)$, and $(56)$. 
Consider the collection $W^{P, P}$ of permutations $ s =
\begin{pmatrix}
    1   & 2   & 3   & 4   & 5   & 6  \\
    i_1 & i_2 & i_3 & i_4 & i_5 & i_6
\end{pmatrix} $
satisfying the following conditions:
\begin{itemize}
    \item[(a)]  $ i_1 < i_2 $, $ i_3 < i_4 $, and $ i_5 < i_6 $;
    \item[(b)]  In the sequence $ ( i_1, i_2, \dots, i_6 ) $, the number $1$ appears before $2$, $3$ appears before $4$, and $5$ appears before $6$. 
\end{itemize}
In fact, $W^{P, P}$ is a set of representatives of the double cosets $ W_P \backslash W / W_P $. (See \cite[Lemma 2.11]{B-Z})

\begin{lemd}
    \label{orbit2}
    There is a decomposition
    \[
        G = \bigcup_{ s \in W^{P, P} } \bigcup_{ t \in \Omega } PstH,
    \]
    where
    \[
        \Omega = \left\{ 1_6,
        \begin{bmatrix}
            1_2 &   &   \\
                &w_2&   \\
                &   &1_2
        \end{bmatrix}
        ,
        \begin{bmatrix}
            1_2 &     &    \\
                & 1_2 &    \\
                &     & w_2
        \end{bmatrix}
        ,
        \begin{bmatrix}
            1_2 &     &    \\
                & w_2 &    \\
                &     & w_2
        \end{bmatrix}
        ,
        \begin{bmatrix}
            1_2 &     &       \\
                & w_2 &       \\
                &     &
                \begin{matrix}
                    0 & 1\\
                    1 & 1
                \end{matrix}
        \end{bmatrix}
        \right\}.
    \]
    Here and henceforth for a positive integer $n$, $w_n$ denotes the $n\times n$ anti-diagonal permutation matrix. 
\end{lemd}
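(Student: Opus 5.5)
We need to show $G=\bigcup_{s\in W^{P,P}}\bigcup_{t\in\Omega} PstH$. The plan is to use $H=H_0N\subset P$ and reduce to an orbit problem for $\GL_2(k)$ on a product of projective lines.

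\textbf{Step 1 (Bruhat).} Since $N\subseteq P$, the Bruhat decomposition gives $G=\bigcup_{s\in W^{P,P}} PsP$. We write $P=M N$, so that $PsP=Ps M N$.

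\textbf{Step 2 (reduce to $M$).} For fixed $s$, put
\[
M_s:=M\cap s^{-1}Ps .
\]
This is a standard parabolic of $M\cong\GL_2^3$ in which each factor is either $\GL_2$ or $B_2$. Because $s$ is a minimal-length double coset representative, $s^{-1}Ps\cap M$ is indeed such a parabolic, and $PsM=Ps(M_s\backslash M)$. Hence
\[
PsP=\bigcup_{m} Psm\,N,
\]
where $m$ runs over representatives of $M_s\backslash M/H_0$.

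The $N$-factor needs care. For $m\in M$ we have $PsmN=PsmN$, and $N$ is normalized by $M$. So $PsmH=Psm H_0N$, and it suffices to describe $M_s\backslash M/H_0$.

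\textbf{Step 3 (orbit count).} The quotient $M_s\backslash M$ is a product of copies of $\mathbb{P}^1$ (one for each $B_2$ factor) and points, with $\GL_2(k)$ acting diagonally. We need representatives of the diagonal $\GL_2$-orbits:
\begin{itemize}
\item on a point, one orbit;
\item on $\mathbb{P}^1$, one orbit (representative $1$);
\item on $(\mathbb{P}^1)^2$, two orbits: diagonal and off-diagonal, represented by $(1,1)$ and $(1,w_2)$;
\item on $(\mathbb{P}^1)^3$, five orbits: all equal; exactly two equal (three patterns); all distinct.
\end{itemize}
Since $\GL_2$ acts triply transitively on $\mathbb{P}^1$, the all-distinct case is represented by the three lines $\infty$, $0$, $1$. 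With respect to the right action, these are given by the cosets of $1_2$, $w_2$ and $\left[\begin{smallmatrix}0&1\\1&1\end{smallmatrix}\right]$.

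The elements of $\Omega$, which fix the first block, give exactly these patterns:
\begin{itemize}
\item $(1,1,1)$;
\item $(1,w,1)$;
\item $(1,1,w)$;
\item $(1,w,w)$;
\item $(1,w,u)$.
\end{itemize}
Only the pattern $(w,1,1)$ is missing. Multiplying $(w,1,1)$ on the right by the diagonal element $w$ gives $(1,w,w)$, since $w^2=1$. So the five elements of $\Omega$ suffice for $(\mathbb{P}^1)^3$. For fewer $\mathbb{P}^1$ factors, the projections of these representatives still cover all orbits.

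\textbf{Main obstacle.} The delicate point is whether the factors of $M_s$ are really $B_2$ with the standard orientation, so that $t\in\Omega$ gives well-defined cosets. One must also check that left multiplication of $m$ by $M_s$ is absorbed into $P$ through $s$, i.e.\ that $sM_s s^{-1}\subseteq P$. This holds by the definition of $M_s$.

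Since the statement is only a union (not necessarily disjoint), overcounting is harmless. Verifying each representative against each $s$ is a finite check, which I would do only where the $B_2$ orientation could differ.
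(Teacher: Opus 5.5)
Your argument is correct, and its skeleton is the paper's. Both use the Bruhat decomposition, absorb the block Borel $B=B_2^3\subseteq M$ into $P$ through $s$, and reduce to the identity $M=\bigcup_{t\in\Omega}BtH_0$. The difference is in how this last identity is proved.

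The paper proves it by explicit matrix manipulation. It combines the Bruhat decomposition of $\GL_2(k)$, the symmetry $X^{(i)}X^{(j)}H_0=X^{(i)}X^{(l)}H_0$, and the decomposition $\GL_2(k)=\bigcup_{y\in\mathfrak S_2'}B_2yT_2$. This yields six candidates $\diag(1_2,x,y)$. An explicit factorization then shows that $\diag\bigl(1_2,1_2,\left[\begin{smallmatrix}0&1\\1&1\end{smallmatrix}\right]\bigr)$ already lies in $B\,\diag(1_2,1_2,w_2)H_0$.

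You instead identify $B\backslash M$ with $(\mathbb P^1)^3$ and use the $3$-transitivity of $\mathrm{PGL}_2(k)$ on $\mathbb P^1(k)$, so the $H_0$-orbits are classified by their coincidence pattern. Your route is shorter and explains why exactly five representatives appear. It also shows the five double cosets $BtH_0$ are pairwise distinct, which is not needed for the union. The paper's route stays purely computational and produces the representatives, and the collapse of the sixth candidate, directly.

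Two minor remarks.

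\begin{enumerate}
\item Your ``main obstacle'' is immediate. Condition (a) in the definition of $W^{P,P}$ gives $sBs^{-1}\subseteq B_6\subseteq P$, i.e.\ $B\subseteq M_s$; this is exactly the paper's observation $sB\subseteq Ps$. So no case-by-case orientation check is needed. The case of fewer $\mathbb P^1$ factors is also automatic from $M=\bigcup_t BtH_0\subseteq\bigcup_t M_stH_0$.
\item The display $PsP=\bigcup_m PsmN$ should read $PsP=\bigcup_t PstH_0N=\bigcup_t PstH$, which is what your next sentence actually uses.
\end{enumerate}
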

    
\begin{proof}
    Set
    \[
        B := \left\{ \diag( b_1, b_2, b_3 ) \, \middle| \, b_1, b_2, b_3 \in B_2 \right\} \cong B_2 \times B_2 \times B_2.
    \]
    By Bruhat decomposition of $\GL_6$ and parabolic subgroup $P$, we obtain
    \[
        G = \bigcup_{ s \in W^{P, P} } PsP.
    \]
    Noting that $ sB \subseteq Ps $ for all $ s \in W^{P, P} $, it suffices to show the decomposition
    \[
        P = \bigcup_{ t \in \Omega } BtH.
    \]
    Set
    \[
        T_2^{(i)} = \{ \diag( x_1, x_2, x_3 ) \mid x_i \in T_2 \text{ and } x_j = 1_2 \text{ for } j \ne i \} \subseteq G.
    \]
    We define analogously the subgroups $ X^{(i)} \subseteq G $ for each subgroup $X$ of $\GL_2(k)$ and $ i = 1, 2, 3 $. One checks easily that 
    \begin{equation}
        \label{XH0}
        X^{(i)} X^{(j)} H_0 = X^{(i)} X^{(l)} H_0
    \end{equation}
    for $ \{ i, j, l \} = \{ 1, 2, 3 \} $ and any subgroup $ X \subseteq \GL_2(k) $. Let $ \mathfrak{S}_2 = \{ 1_2, w_2 \} $ and $ \mathfrak{S}_2' = \mathfrak{S}_2 \cup \left\{
    \begin{bmatrix}
        0 & 1\\
        1 & 1
    \end{bmatrix}
    \right\} $. Then we have the following decomposition:
    \[
        \GL_2(k) = \bigcup_{ x \in \mathfrak{S}_2 } B_2 x B_2 = \bigcup_{ y \in \mathfrak{S}_2' } B_2 y T_2.
    \]
    Hence,
    \[
        \begin{aligned}
            M &= \GL_2(k)^{(2)} \GL_2(k)^{(3)} H_0                              \\
              &= \bigcup_{ x \in \mathfrak{S}_2 } \GL_2(k)^{(3)} B_2^{(2)} \diag( 1_2, x, 1_2 ) B_2^{(2)} H_0                                                     \\
              &= \bigcup_{ x \in \mathfrak{S}_2 } B_2^{(1)} B_2^{(2)} \GL_2(k)^{(3)} \diag( 1_2, x, 1_2 ) H_0                   & ( \text{by } ( \ref{XH0} ) )\\
              &= \bigcup_{ x \in \mathfrak{S}_2 } \bigcup_{ y \in \mathfrak{S}_2' } B_2^{(1)} B_2^{(2)} B_2^{(3) } \diag( 1_2, x, y ) T_2^{(3)} H_0             \\
              &= \bigcup_{ x \in \mathfrak{S}_2 } \bigcup_{ y \in \mathfrak{S}_2' } B \diag( 1_2, x, y ) T_2^{(1)} T_2^{(2)} H_0 & ( \text{by } ( \ref{XH0} ) )\\
              &= \bigcup_{ x \in \mathfrak{S}_2 } \bigcup_{ y \in \mathfrak{S}_2' } B \diag( 1_2, x, y ) H_0.
            \end{aligned}
    \]
    The last equality holds because $ T_2 x = x T_2 $ for each $ x \in \mathfrak{S}_2 $. 
    
    Now observe that
    \[
        \begin{bmatrix}
            1_2 &   &     \\
                &1_2&     \\
                &   &
            \begin{matrix}
                0 & 1\\
                1 & 1
            \end{matrix}
        \end{bmatrix}
        =
        \begin{bmatrix}
            1 & -1 &   &    &   &  \\
              & 1  &   &    &   &  \\
              &    & 1 & -1 &   &  \\
              &    &   & 1  &   &  \\
              &    &   &    & 1 &  \\
              &    &   &    &   & 1
        \end{bmatrix}
        \begin{bmatrix}
            1_2 &     &    \\
                & 1_2 &    \\
                &     & w_2
        \end{bmatrix}
        \begin{bmatrix}
            1 & 1 &   &   &   &  \\
              & 1 &   &   &   &  \\
              &   & 1 & 1 &   &  \\
              &   &   & 1 &   &  \\
              &   &   &   & 1 & 1\\
              &   &   &   &   & 1
        \end{bmatrix}
        ,
    \]
    which implies
    \[
        M = \bigcup_{ x \in \mathfrak{S}_2 } \bigcup_{ y \in \mathfrak{S}_2' } B \diag( 1_2, x, y ) H_0 = \bigcup_{ t \in \Omega } BtH_0.
    \]
    Since $ P = MN $ and $ H = H_0N $, it follows that $ P = \bigcup_{ t \in \Omega } BtH $.
\end{proof}

In the rest of this section, we will denote the matrices
\[
    1_6,
    \begin{bmatrix}
        1_2 &     &    \\
            & w_2 &    \\
            &     & 1_2
    \end{bmatrix}
    ,
    \begin{bmatrix}
        1_2 &     &    \\
            & 1_2 &    \\
            &     & w_2
    \end{bmatrix}
    ,
    \begin{bmatrix}
        1_2 &     &    \\
            & w_2 &    \\
            &     & w_2
    \end{bmatrix}
    ,
    \begin{bmatrix}
        1_2 &     &   \\
            & w_2 &   \\
            &     &
        \begin{matrix}
            0 & 1\\
            1 & 1
        \end{matrix}
    \end{bmatrix}
\]
by $ t_1, t_2, t_3, t_4, t_0 $, respectively. The unique open orbit of the right action of $H$ on $\mathcal{P}$ is represented by $ \gamma_0 := (15) (26) $. 

Let $\Gamma$ denote the set $ \{ st \mid s \in W^{P, P}, t \in \Omega \} \subseteq G $. For any $ \gamma \in \Gamma $, the stabilizer of $ P \gamma \in \mathcal{P} $ in $H$ is
\[
    H_{ \gamma } = \gamma^{-1} P \gamma \cap H.
\]
Set $ P_\gamma = \gamma H_\gamma \gamma^{-1} = P \cap \gamma H \gamma^{-1} $ and $ N_\gamma = \gamma^{-1} N \gamma \cap H \subseteq H_\gamma $. 

\begin{lemd}
    \label{truestabilizer}
    If $ \gamma \in \Gamma $ such that $ { \psi_H }|_{ N_\gamma } $ is trivial, then $\gamma$ is contained in  the orbit of one of the representatives in the set
    \[
        \Gamma' := \{ (15)(26), (152643), (134625), (13)(25)(46) \}.
    \]
\end{lemd}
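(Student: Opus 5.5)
The plan is to replace the condition on $N_\gamma$ by a condition on explicit root subgroups. After that, the elements $\gamma = st\in\Gamma$ can be checked one at a time.

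\textbf{Step 1: a computable subgroup of $N_\gamma$.} Write $\gamma = st$ with $s\in W^{P,P}$ and $t\in\Omega$. Since $t\in M$ normalizes $N$ and $N\subseteq H$, we get
\[
N_\gamma \supseteq \gamma^{-1}N\gamma\cap N = t^{-1}\bigl(s^{-1}Ns\cap N\bigr)t .
\]
The group $U_s:=s^{-1}Ns\cap N$ is the product of the root subgroups $1+kE_{ij}$ over all pairs $(i,j)$ such that:
\begin{itemize}
\item $i<j$ and $i,j$ lie in different $2\times2$ blocks;
\item $s(i)<s(j)$ and $s(i),s(j)$ also lie in different blocks.
\end{itemize}
The character $\psi_H$ restricted to $N$ is $\psi$ applied to the linear form
\[
\ell = x_{13}+x_{24}+x_{35}+x_{46},
\]
where $x_{ij}$ denotes the $(i,j)$-entry. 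So a necessary condition for $\psi_H|_{N_\gamma}$ to be trivial is that $\ell$ vanishes on $\mathrm{Lie}(t^{-1}U_st)$.

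Each $t$ is block diagonal with blocks in $\{1_2,w_2,\left[\begin{smallmatrix}0&1\\1&1\end{smallmatrix}\right]\}$. Conjugating by $t$ acts on the $(a,b)$-block of $N$ as $Z\mapsto t_a^{-1}Zt_b$. Hence the condition becomes a linear-algebra condition on the supports of the blocks $X$ and $Y$ of $U_s$: the functional $\operatorname{tr}(t_1^{-1}Xt_2)+\operatorname{tr}(t_2^{-1}Yt_3)$ must vanish identically.

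\textbf{Step 2: the enumeration.} The set $W^{P,P}$ is in bijection with $3\times3$ nonnegative integer matrices with all row and column sums equal to $2$. There are $21$ such matrices. Together with the $5$ choices of $t$ this gives a finite list of cases.

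For each $s$, I will record which entries of the $X$-block (positions $(1,2)$) and the $Y$-block (positions $(2,3)$) survive in $U_s$. Two simplifications cut the list down sharply:
\begin{itemize}
\item If $U_s$ contains a full $2\times2$ block in position $(1,2)$ or $(2,3)$, then the trace functional on that block is nonzero after any conjugation. So $\psi_H|_{N_\gamma}$ is nontrivial and $\gamma$ is discarded.
\item If $U_s$ contains exactly one diagonal entry of $X$ or $Y$, then $t$ must move it off the diagonal. This is possible only when the relevant $t$-blocks differ by $w_2$. It therefore pins down $t$ up to the ambiguity $w_2$ versus $\left[\begin{smallmatrix}0&1\\1&1\end{smallmatrix}\right]$.
\end{itemize}

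If a survivor is not yet excluded, I will use the full group $N_\gamma=\gamma^{-1}N\gamma\cap H$ rather than only $\gamma^{-1}N\gamma\cap N$. This larger group also contains elements with nontrivial $H_0$-component combined with an $N$-part; these should give extra constraints that exclude the remaining cases.

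\textbf{Step 3: identifying survivors with $\Gamma'$.} For each $\gamma$ that passes the test, I will show that $P\gamma H=P\gamma' H$ for some $\gamma'\in\Gamma'$. This is needed because the decomposition in Lemma \ref{orbit2} is a covering and need not be disjoint. To do this I will exhibit explicit $p\in P$ and $h\in H$ with $p\gamma h=\gamma'$, typically by absorbing a $w_2$ or $\left[\begin{smallmatrix}0&1\\1&1\end{smallmatrix}\right]$ factor through the relation \eqref{XH0} and the diagonal $H_0$. As a sanity check:
\begin{itemize}
\item $\gamma_0=(15)(26)$ gives $U_s=1$, consistent with it being the open orbit;
\item the other three elements of $\Gamma'$ should correspond to the three "intermediate" matrices, where $U_s$ meets $X$ and $Y$ only in off-diagonal or anti-diagonal positions.
\end{itemize}

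The main obstacle I expect is the bookkeeping in Steps 2--3. There are two delicate points. First, the cases where $t$ involves $\left[\begin{smallmatrix}0&1\\1&1\end{smallmatrix}\right]$, whose conjugation mixes entries, need care. Second, one has to show that apparently distinct survivors lie in the same $H$-orbit. For the first, I would first try to reduce using the factorization of $t_0$ displayed in the proof of Lemma \ref{orbit2}, which writes $t_0$ as $t_4$ up to unipotent factors in $B$ and $H$.
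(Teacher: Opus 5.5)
Your proposal is correct and is essentially the paper's argument: you take a necessary condition from a subgroup of $N_\gamma$ on which $\psi_H$ must be trivial, run a finite check over $W^{P,P}\times\Omega$, and match the surviving orbits to $\Gamma'$. Carried out, the survivors are exactly the nine entries of the paper's table together with $(15)(26)t_0$, so your fallback to the full $N_\gamma$ is never needed.

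Your trace-functional test on $t^{-1}U_st$ is actually the cleaner way to handle $t_0$. On the $Y$-block it becomes $y_{11}+y_{12}+y_{22}$, which directly forces $U_{46}\not\subseteq U_s$. By contrast, the paper's inference that $sU_{46-45}s^{-1}\not\subseteq N$ puts both $sU_{45}s^{-1}$ and $sU_{46}s^{-1}$ outside $N$ does not follow as stated.
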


\begin{proof}
    Recall that the root subgroups of $G$ with respect to the Borel subgroup $B_6$ are defined as follows. For each root $ \varepsilon_i - \varepsilon_j, 1 \le i, j \le 6 $, the corresponding root subgroup is
    \[
        U_{ij} = \{ 1_6 + xE_{i,j} \mid x \in k \} \subseteq G.
    \]
    Assume that $ \psi_{H}|_{ N_\gamma } $ is trivial. Then the root subgroups $ U_{13}, U_{24}, U_{35} $, and $U_{46}$ are not contained in the group $N_\gamma$. 
    
    We begin with the case where $ \gamma = s t_i $ for some $ s \in W^{P, P} $ and $ i = 1, \dots, 4 $. Note that in this case, $\gamma$ is a matrix realized as a permutation in $ \mathfrak{S}_6 $. Therefore, we have
    \[
        U_{ij} \nsubseteq N_\gamma \Longleftrightarrow U_{ \gamma(i), \gamma(j) } = \gamma U_{ij} \gamma^{-1} \nsubseteq N.
    \]
    This means that for each $ i = 1, \dots, 4 $,
    \[
        ( \gamma(i), \gamma(i + 2) ) \in \{ (1,2), (3,4), (5,6) \} \cup \{ (j,l) \mid 1 \le  l < j \le 6 \}.
    \]
    We obtain the following table of $\gamma$ meeting these requirements. 
        \begin{table}[H]
            \centering
            \caption{Representatives of the Filtered Orbits}
            \label{tab:example}
            \begin{tabular}{|c|c|c|c|}
                \hline
                $(15)(26)$     & $(152643)$     & $(134625)$     & $(13)(25)(46)$ \\ 
                \hline
                $(15)(26) t_2$ &                &                &                \\
                \hline
                $(15)(26) t_3$ &                & $(134625) t_3$ &                \\
                \hline
                $(15)(26) t_4$ & $(152643) t_4$ &                &                \\
                \hline
            \end{tabular}
        \end{table}
    One checks directly that the elements in the same column of the preceding table belong to the same orbit. Hence, the four elements $(15)(26)$, $(152643)$, $(134625)$, and $(13)(25)(46)$ are representatives for the four distinct orbits appearing in the table.
    
    We now consider the case where $ \gamma = s t_0 $ with $ s \in W^{P, P} $. The conjugation action of $t_0$ on the root subgroups is given by
    \[
        t_0 U_{ij} t_0^{-1} =
        \begin{cases}
            U_{14},                                                          & \text{if } (i,j) = (1,3),\\
            U_{23},                                                          & \text{if } (i,j) = (2,4),\\
            U_{46-45} := \{ 1_6 + x ( E_{4, 6} - E_{4, 5} ) \mid x \in k \}, & \text{if } (i,j) = (3,5),\\
            U_{35},                                                          & \text{if } (i,j) = (4,6).
        \end{cases}
    \]
    Since the conjugation by $s$ permutes the entries of a matrix, it follows that if $ s U_{46-45} s^{-1} \nsubseteq N $, then both root subgroups $ s U_{45} s^{-1} $ and $ s U_{46} s^{-1} $ are not contained in $N$ either. We therefore obtain that for $ (i,j) = (1,4), (2,3), (3,5), (4,5), (4,6) $,
    \[
        ( s(i), s(j) ) \in \{ (1,2), (3,4), (5,6) \} \cup \{ (l, m) \mid 1 \le m < l \le 6 \}.
    \]
    A direct computation shows that the only element satisfying these requirements is
    \[
        \gamma = (15)(26) t_0,
    \]
    which lies in the same orbit as $(15)(26)$. 
\end{proof}

\begin{prpd}
    \label{groupsforgamma}
    The groups $H_\gamma$, $P_\gamma$, and $N_\gamma$ for
    \[
        \gamma = (15)(26) ( = \gamma_0 ), (152643), (134625), (13)(25)(46)
    \]
    are described as follows. For $ \gamma = \gamma_0 $, the representative of the unique open orbit,
    \[
        H_{ \gamma_0 } = P_{ \gamma_0 } = H_0, N_{ \gamma_0 } = \{ 1_6 \}.
    \]
    We then define the parameterizations $ \mathsf{H}_\gamma, \mathsf{P}_\gamma $ for $\gamma$ representing the non-open orbits in each case.
    \begin{itemize}
        \item[\rm(a)] For $ \gamma = (152643) $:
        \[
            \mathsf{H}_\gamma ( g, h, x, u, v, y ) =
            \begin{bmatrix}
                g & hx &   &    &    &   \\
                  & h  &   &    &    &   \\
                  &    & g & hx & gu & y \\
                  &    &   & h  &    & hv\\
                  &    &   &    & g  & hx\\
                  &    &   &    &    & h
            \end{bmatrix}
        \]
        and
        \[
            \mathsf{P}_\gamma ( g, h, x, u, v, y ) =
            \begin{bmatrix}
                g & gu & hx & y  &   &   \\
                  & g  &    & hx &   &   \\
                  &    & h  & hv &   &   \\
                  &    &    & h  &   &   \\
                  &    &    &    & g & hx\\
                  &    &    &    &   & h
            \end{bmatrix}
            ,
        \]
        where $ g, h \in k^\times $ and $ x, u, v, y \in k $.
        \item[\rm(b)] For $ \gamma = (134625) $:
        \[
            \mathsf{H}_\gamma ( g, h, x, u, v, y ) = 
            \begin{bmatrix}
                g & hx & gu & y  &   &   \\
                  & h  &    & hv &   &   \\
                  &    & g  & hx &   &   \\
                  &    &    & h  &   &   \\
                  &    &    &    & g & hx\\
                  &    &    &    &   & h
            \end{bmatrix}
        \]
        and
        \[
            \mathsf{P}_\gamma (g, h, x, u, v, y ) = 
            \begin{bmatrix}
                g & hx &   &    &    &   \\
                  & h  &   &    &    &   \\
                  &    & g & gu & hx & y \\                        
                  &    &   & g  &    & hx\\ 
                  &    &   &    & h  & hv\\
                  &    &   &    &    & h
            \end{bmatrix}
            ,
        \]
        where $ g, h \in k^\times $ and $ x, u, v, y \in k $.
        \item[\rm(c)] For $ \gamma = (13)(25)(46) $:
        \[
            \mathsf{H}_\gamma ( g, h, x, u, v, y_1, y_2, y_3 ) = 
            \begin{bmatrix}
                g & y_1 &   & y_2 &    & hx \\
                  & h   &   & hv  &    &    \\
                  &     & g & y_1 & gu & y_3\\
                  &     &   & h   &    &    \\
                  &     &   &     & g  & y_1\\
                  &     &   &     &    & h
            \end{bmatrix}
            ,
        \]
        and
        \[
            \mathsf{P}_\gamma ( g, h, x, u, v, y_1, y_2, y_3 ) =
            \begin{bmatrix}
                g & gu &   & y_3 &     & y_1\\
                  & g  &   & y_1 &     &    \\
                  &    & g & hx  & y_1 & y_2\\
                  &    &   & h   &     &    \\
                  &    &   &     & h   & hv \\
                  &    &   &     &     & h
            \end{bmatrix}
            ,
        \]
        where $ g, h \in k^\times $ and $ x, u, v, y_1, y_2, y_3 \in k $.
    \end{itemize}
    Then, in cases {\rm (a)} and {\rm (b)}, we have
    \begin{itemize}
        \item $ H_\gamma = \{ \mathsf{H}_\gamma ( g, h, x, u, v, y ) \mid g, h \in  k^\times, x, u, v, y \in k \} $,
        \item $ P_\gamma = \{ \mathsf{P}_\gamma ( g, h, x, u, v, y ) \mid g, h \in  k^\times, x, u, v, y \in k \} $,
        \item $ N_\gamma = \left\{ \mathsf{H}_\gamma (1, 1, 0, 0, 0, y ) \mid y \in k \right\} $,
    \end{itemize}
    and the parameterizations $ \mathsf{H}_\gamma, \mathsf{P}_\gamma: (k^\times)^2 \times  k^4 \rightarrow G $ satisfy:
    \begin{itemize}
        \item $ \mathsf{P}_\gamma = \gamma \mathsf{H}_\gamma \gamma^{-1} $.
        \item $ \delta_P ( \mathsf{P}_\gamma ( g, h, \cdot ) ) = \absk{ g h^{-1} }^{4} $, and $ \delta_{ H_\gamma } ( \mathsf{H}_\gamma ( g, h, \cdot ) ) = \absk{ g h^{-1} }^2 $ for $ g, h \in k^\times $.
    \end{itemize}
    In case {\rm (c)}, we have 
    \begin{itemize}
        \item $ H_\gamma = \{ \mathsf{H}_\gamma ( g, h, x, u, v, y_1, y_2, y_3 ) \mid g, h \in k^\times, x, u, v, y_1, y_2, y_3 \in k \} $,
        \item $ P_\gamma = \{ \mathsf{P}_\gamma ( g, h, x, u, v, y_1, y_2 ,y_3 ) \mid g, h \in k^\times, x, u, v, y_1, y_2, y_3 \in k \} $,
        \item $ N_\gamma = \{ \mathsf{H}_\gamma ( 1, 1, 0, 0, 0, y_1, y_2, y_3 ) \mid y_1, y_2, y_3 \in k \} $,
    \end{itemize}
    and the parameterizations $ \mathsf{H}_\gamma , \mathsf{P}_\gamma: (k^\times)^2 \times  k^6 \rightarrow G $ satisfy:
    \begin{itemize}
        \item $ \mathsf{P}_\gamma = \gamma \mathsf{H}_\gamma \gamma^{-1} $.
        \item $ \delta_P ( \mathsf{P}_\gamma (g, h, \cdot ) ) = \absk{ g h^{-1} }^8 $, and $ \delta_{ H_\gamma } ( \mathsf{H}_\gamma ( g, h, \cdot ) ) = \absk{ g h^{-1} }^4$ for $ g, h \in k^\times $.
    \end{itemize}
\end{prpd}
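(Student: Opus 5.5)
The plan is to compute $P_\gamma = P\cap \gamma H\gamma^{-1}$ directly for each of the four permutation matrices. The key fact is $\gamma E_{ij}\gamma^{-1}=E_{\gamma(i),\gamma(j)}$. A general element of $H$ is $h = \diag(g,g,g)\,n(X,Y,Z)$ with $g=\begin{bmatrix} a&b\\ c&d\end{bmatrix}$. Conjugating by $\gamma$ moves the entry in position $(i,j)$ to position $(\gamma(i),\gamma(j))$. So $\gamma h\gamma^{-1}\in P$ holds exactly when every entry of $h$ that lands strictly below the block diagonal of type $[2^3]$ vanishes. For each $\gamma$ I will list these landing positions and read off the linear conditions on $a,b,c,d$ and on the entries of $gX, gY, gZ$.

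For $\gamma_0=(15)(26)$, the conjugation swaps the first and third diagonal blocks. The blocks $gX, gY, gZ$ then all land below the diagonal and must vanish, which forces $X=Y=Z=0$. This gives $H_{\gamma_0}=H_0$, and $P_{\gamma_0}=\gamma_0H_0\gamma_0^{-1}=H_0$ because $\gamma_0$ commutes with $\diag(g,g,g)$. Then $N_{\gamma_0}$ is trivial.

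For the three non-open $\gamma$, the vanishing conditions force $c=0$. Writing $g=\begin{bmatrix} g & hx\\ & h\end{bmatrix}$ in the paper's notation, they also kill a prescribed set of entries of $gX,gY,gZ$. The surviving entries are exactly the free parameters $u,v,y$ (resp.\ $u,v,y_1,y_2,y_3$) in $\mathsf H_\gamma$. One caveat is that the parametrization is in terms of the entries of $gX$ etc., not of $X$ itself. So I will check that the surviving entry pattern is stable under left multiplication by the upper triangular $g$. This stability is what makes the displayed shape of $\mathsf H_\gamma$, with entries $gu$, $hv$, $hx$, a subgroup. Next I verify $\mathsf P_\gamma=\gamma\mathsf H_\gamma\gamma^{-1}$ by permuting entries, and check that it lies in $P$. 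That gives both descriptions at once.

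$N_\gamma=\gamma^{-1}N\gamma\cap H$ is the subgroup of $H_\gamma$ whose image $\mathsf P_\gamma$ lies in $N$. This means the diagonal $2\times 2$ blocks of $\mathsf P_\gamma$ are identities. Inspection then forces $g=h=1$ and $x=u=v=0$, leaving only $y$ (resp.\ $y_1,y_2,y_3$).

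For the modular characters, $\delta_P(\mathsf P_\gamma)$ follows from the stated formula $\absk{\det(g_1g_3^{-1})}^4$ applied to the diagonal blocks of $\mathsf P_\gamma$. In (a) these blocks are $\begin{bmatrix} g&gu\\&g\end{bmatrix}$ and $\begin{bmatrix} g&hx\\&h\end{bmatrix}$, giving $|g^2/(gh)|^4=|g/h|^4$. In (c) the first block has determinant $g^2$ and the third $h^2$, giving $|g/h|^8$. For $\delta_{H_\gamma}$, I will compute the modular character of the solvable group as the absolute determinant of $\Ad$ on its Lie algebra, i.e.\ on the unipotent radical spanned by the root directions $x,u,v,y,\dots$. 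Each coordinate scales by a character $g/h$, $h/g$, or $1$, and the product gives $|g/h|^2$, resp.\ $|g/h|^4$. The sign convention of $\delta$ must match the paper's, consistent with $\delta_{B_2}=|\cdot|\otimes|\cdot|^{-1}$.

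The main obstacle is purely bookkeeping: tracking which of the 12 entries of $gX,gY,gZ$ survive each permutation, and verifying the closure and the exact entry placements, e.g.\ the repeated $hx$ and $y_1$. I would do this by tabulating $\gamma(i)$ for each $\gamma$ before conjugating.
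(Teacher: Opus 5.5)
Your proposal is correct and is exactly the direct entry-by-entry computation that the paper leaves implicit (the proposition is stated there without proof). Carrying it out confirms every displayed matrix, $N_\gamma$, $\delta_P$ and $\delta_{H_\gamma}$; for the last, the weight is $g/h$ on two (resp.\ four) of the unipotent directions and trivial on the $u$- and $v$-directions.

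Two cosmetic remarks. In case (c) the paper calls the off-diagonal entry of the $g$-block $y_1$, and $hx$ is the surviving entry of $gZ$, so the surviving entries of $gX,gY,gZ$ there are $y_2,hv,gu,y_3,hx$. Also, your stability check is unnecessary: for fixed $g$ the blocks $gX,gY,gZ$ range freely over $\RM_2(k)$, and $H_\gamma=\gamma^{-1}P\gamma\cap H$ is automatically a subgroup.
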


\subsection{Reducibility to homology calculations}

In the following two subsections, we prove that for each good representation $ \tau = \tau_1 \otimes \tau_2 \otimes \tau_3 $ and each non-open orbit $ P \gamma H \subseteq P \backslash G $, the homology groups 
\begin{equation}
    \label{targetspaces}
    \RH_i \left( H_\gamma, \tau^\gamma \otimes \rho_P^\gamma|_{ H_\gamma } \otimes \delta^{-1}_{ H_\gamma } \otimes \varphi_H^{-1} \right) = 0,\quad \text{for } i \geq 0.
\end{equation}
Hereafter, for an element $\gamma\in G$, the notation $ ( \cdot )^\gamma $ denotes the representation of $ H_\gamma $ obtained from a representation of $ P_\gamma $ by defining the action of $ h \in H_\gamma $ as that of $ \gamma h \gamma^{-1} \in P_\gamma $. $\rho_P$ denotes the square root of the modular character $\delta_P$.

Before proceeding to the calculations, we briefly explain the motivation and set up the notation used throughout these two subsections. 

Let $\mathsf{E}$ be the vector bundle on $ \mathcal{P} = P \backslash G$ induced by the representation $ \delta_P^{1/2} \otimes \tau $ of $P$. Equivalently,
\[
    \mathsf{E} = G \times_P V_\tau := G \times V_\tau / \sim,
\]
where $V_\tau$ is the representation space of $\tau$ and the equivalence relation is defined by 
\[
    (g, v) \sim ( pg, \delta_P(p)^{1/2} \tau(p) v ), \text{ for } g \in G, p \in P, \text{ and } v \in V_\tau.
\]
We endow $\mathsf{E}$ with the quotient topology induced from the product topology on $ G \times V_\tau $. Define the projection map by
\[
    \mathrm{pr}: \mathsf{E} \rightarrow \mathcal{P}, \quad (g, v) \mapsto Pg.
\]
Let $ \Gamma( \mathcal{P}, \mathsf{E} ) $ denote the space of smooth sections of $\mathsf{E}$:
\[
    \Gamma( \mathcal{P}, \mathsf{E} ) := \left\{ s: \mathcal{P} \rightarrow \mathsf{E} \mid s\text{ is locally constant such that } s \circ \mathrm{pr} = \mathrm{id}_{ \mathcal{P} } \right\},
\]
and let $ \Gamma_c ( \mathcal{P}, \mathsf{E} ) $ denote the subspace of $ \Gamma ( \mathcal{P}, \mathsf{E} ) $ consisting of sections with compact support. Define the action of $G$ on a section $ s \in \Gamma( \mathcal{P}, \mathsf{E} ) $ by right translation. Then as representations of $G$, we have isomorphisms
\begin{equation}
    \label{indandsec}
    \Gamma( \mathcal{P}, \mathsf{E} ) \cong \Ind^G_P ( \tau ) \text{ and } \Gamma_c ( \mathcal{P}, \mathsf{E} ) \cong \mathrm{ind}^G_P ( \tau ).
\end{equation}
Here, $\mathrm{ind}$ denotes the normalized compact induction. By Iwasawa decomposition, we have $ \Ind^G_P ( \tau ) = \mathrm{ind}^G_P ( \tau ) $.

We are going to use a method similar to Bernstein's geometric lemma (see \cite{B-Z}). Recall that $H$ acts on the space $\mathcal{P}$ via right multiplications. For $\gamma\in G$, let $ \mathcal{O}_\gamma := P \gamma H \subseteq \CP $ be the $H$-orbit of $ P \gamma \in \CP$.  We label the $H$-orbits in $\mathcal{P}$ as $ \mathcal{O}_{ \gamma_0 }, \mathcal{O}_{ \gamma_1 }, \dots, \mathcal{O}_{ \gamma_m } $ such that
\begin{itemize}
    \item $ \mathcal{O}_{\gamma_0} $ is the unique open orbit;
    \item For each $ i = 0, 1, \dots, m - 1 $, $\mathcal{Y}_i := \bigcup_{j=0}^i \mathcal{O}_{ \gamma_j } $ is open in $ \mathcal{Y}_{i+1} $, where $ \mathcal{Y}_m = \mathcal{P} $.
\end{itemize}
For each $ i = 0, 1, \dots, m - 1 $, we have the following short exact sequence of representations of $H$:
\begin{equation}
    \label{gammaexact}
    0 \rightarrow \Gamma_c \left( \mathcal{Y}_{i}, \mathsf{E}|_{ \mathcal{Y}_{i} } \right) \to \Gamma_c \left( \mathcal{Y}_{i + 1}, \mathsf{E}|_{ \mathcal{Y}_{i + 1} } \right) \to \Gamma_c \left( \mathcal{O}_{ \gamma_{i + 1} }, \mathsf{E}|_{ \mathcal{O}_{ \gamma_{i + 1} } } \right) \rightarrow 0,
\end{equation}
where the first map is the extension by zero, and the second map is given by restriction. Moreover, there is an isomorphism
\begin{equation}
    \label{littleorbitsections}
    \Gamma_c \left( \mathcal{O}_{ \gamma_{i} }, \mathsf{E}|_{ \mathcal{O}_{ \gamma_{i} } } \right) \cong \mathrm{ind}^H_{ H_{ \gamma_i } } \left( \tau^{ \gamma_i } \otimes \rho_P^{ \gamma_i }|_{ H_{ \gamma_i } } \otimes \delta_{ H_{ \gamma_i } }^{-1/2} \right).
\end{equation}
For each $ i = 0, 1, \dots, m $, tensoring $ ( \ref{gammaexact} ) $ with the character $ \varphi_H^{-1} $, which preserves the exactness, and then taking homology yields a long exact sequence 
\begin{equation}
    \label{longexact}
    \begin{array}{llll}
        \cdots &\rightarrow \RH_1
        \left(
            H, \Gamma_c \left( \mathcal{O}_{ \gamma_{i} }, \mathsf{E}|_{ \mathcal{O}_{ \gamma_{i} } } \right) \otimes \varphi_H^{-1}
        \right) 
        &\rightarrow \RH_0
        \left(
            H, \Gamma_c \left( \mathcal{Y}_{i}, \mathsf{E}|_{ \mathcal{Y}_{i} } \right) \otimes \varphi_H^{-1}
        \right)
        &\\
        &\rightarrow \RH_0 
        \left(
            H, \Gamma_c \left( \mathcal{Y}_{i + 1}, \mathsf{E}|_{ \mathcal{Y}_{i + 1} } \right) \otimes \varphi_H^{-1}
        \right)
        &\rightarrow \RH_0
        \left(
            H, \Gamma_c \left( \mathcal{O}_{ \gamma_{i} }, \mathsf{E}|_{ \mathcal{O}_{ \gamma_{i} } } \right) \otimes \varphi_H^{-1}
        \right)
        &\rightarrow 0.
    \end{array}
\end{equation}
By Frobenius reciprocity, for each $ i = 0, 1, \dots, m $ and each $ j \ge 0 $,
\begin{equation}
    \label{shapiro}
    \begin{aligned}
         & \RH_j
        \left(
            H, \Gamma_c \left( \mathcal{O}_{ \gamma_{i} }, \mathsf{E}|_{ \mathcal{O}_{ \gamma_{i} } } \right) \otimes \varphi_H^{-1} 
        \right)                                                                          \\ 
        =& \RH_j
        \left(
            H, \mathrm{ind}^H_{ H_{ \gamma_i } } \left( \tau^{ \gamma_i } \otimes \rho_P^{ \gamma_i }|_{ H_{ \gamma_i } } \otimes \delta_{ H_{ \gamma_i } }^{-1/2} \right) \otimes \varphi_H^{-1} \right) & ( \text{by } ( \ref{littleorbitsections} ) )\\
        =& \RH_j
        \left(
            H_{ \gamma_i }, \tau^{ \gamma_i } \otimes \rho_P^{ \gamma_i }|_{ H_{ \gamma_i } } \otimes \delta_{ H_{ \gamma_i } }^{-1} \otimes \varphi_H^{-1}|_{ H_{ \gamma_i } }
        \right).
    \end{aligned}
\end{equation}
Assuming that $ ( \ref{targetspaces} ) $ holds, the long exact sequence $ ( \ref{longexact} ) $ together with isomorphisms $ ( \ref{shapiro} ) $ induces isomorphisms
\[
    \begin{aligned}
        \RH_i 
        \left(
            H, \Gamma_c \left( \mathcal{O}_{ \gamma_0 }, \mathsf{E}|_{ \mathcal{O}_{ \gamma_0 } } \right) \otimes \varphi_H^{-1}
        \right)
                 =& \RH_i
        \left(
            H, \Gamma_c \left( \mathcal{Y}_{1}, \mathsf{E}|_{ \mathcal{Y}_{1} } \right) \otimes \varphi_H^{-1}
        \right)\\
        = \cdots =& \RH_i
        \left(
            H, \Gamma_c \left( \mathcal{Y}_{i}, \mathsf{E}|_{ \mathcal{Y}_{i} } \right) \otimes \varphi_H^{-1}
        \right)\\
                 =& \RH_i
        \left(
            H, \Gamma_c \left( \mathcal{Y}_{i + 1}, \mathsf{E}|_{\mathcal{Y}_{i + 1} } \right) \otimes \varphi_H^{-1}
        \right)\\
        = \cdots =& \RH_i
        \left(
            H, \Gamma_c \left( \mathcal{P},\mathsf{E} \right) \otimes \varphi_H^{-1}
        \right)
    \end{aligned}
\]
for all $ i \ge 0 $. Recall that $ H_{ \gamma_0 } = H_0 $, under which $ \rho_P|_{H_0} = \delta_{H_0} = 1 $, $ \varphi_H|_{H_0} = \eta_H $, and $ \tau^{ \gamma_0 } = \tau $. We have 
\[
    \begin{aligned}
        \RH_i \left( H_0, \tau \otimes \eta_H^{-1} \right) =& \RH_i
        \left(
            H, \Gamma_c \left( \mathcal{O}_{ \gamma_0 }, \mathsf{E}|_{ \mathcal{O}_{ \gamma_0 } } \right) \otimes \varphi_H^{-1} \right) & ( \text{by } ( \ref{shapiro} ) ) \\
                                                           =& \RH_i
        \left(
            H, \Gamma_c \left( \mathcal{P}, \mathsf{E} \right) \otimes \varphi_H^{-1}
        \right)                                                                        \\
                                                           =& \RH_i
        \left(
            H, \Ind^G_P ( \tau ) \otimes \varphi_H^{-1}
        \right)
        .                                            & ( \text{by } ( \ref{indandsec} ) )
    \end{aligned}
\]
For our main theorem, only the case $i = 0$ is required. According to \cite[Theorem 1.1]{Prasad}, the dimension of the space 
\[
    \Hom_{H_0} \left( \tau, \eta_H \right) = \Hom_{H_0} ( \tau_1 \otimes \tau_2 \otimes \tau_3, \eta \circ \det )
\]
is at most $1$, if each $\tau_i$ is irreducible. Therefore, by the natural duality, we obtain
\[
    \Hom_{H_0} \left( \tau, \eta_H \right) \cong \RH_0 \left( H_0, \tau \otimes \eta_H^{-1} \right)^* = \RH_0 \left( H, \Ind^G_P ( \tau ) \otimes \varphi_H^{-1} \right)^* \cong \Hom_H \left( \pi_\tau, \varphi_H \right),
\]
where $ ( \cdot )^* $ denotes the dual space of a finite dimensional vector space. This completes the proof of Theorem $ \ref{mainthm1} $.

\subsection{Computation of the homology groups}

To prove $ ( \ref{targetspaces} ) $, we use the Hochschild-Serre spectral sequence. 
\begin{prpd}[{\cite[Theorem 5]{H}}]
    Let $Q$ be a locally compact totally disconnected group and $Q'$ be a normal subgroup of $Q$. Let $Q''$ denote the quotient $Q/Q'$. Then for any smooth representation $V$ of $Q$, we have the spectral sequence
    \[
        E_{p,q}^2 := \RH_p( Q'', \RH_q( Q', V ) ) \Rightarrow \RH_{p + q}( Q, V ).
    \]
\end{prpd}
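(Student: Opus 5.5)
This is a standard result that the paper quotes from \cite[Theorem 5]{H}. If one wanted to prove it, I would present it as a Grothendieck spectral sequence for a composite of right exact functors. Here $Q'$ is understood to be a closed normal subgroup, so that $Q''$ is again locally compact and totally disconnected.

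\emph{Setup and factorization.} Work in the abelian category $\mathcal{M}(Q)$ of smooth representations of $Q$, and define $\RH_q(Q,\cdot)$ as the left derived functors of the coinvariant functor $V\mapsto V_Q$. This category has enough projectives: for a compact open subgroup $K\subseteq Q$ and a smooth representation $W$ of $K$, the compact induction $\mathrm{ind}_K^Q W$ is projective, because $\Hom_Q(\mathrm{ind}_K^Q W,\cdot)=\Hom_K(W,\cdot)$ and taking $K$-isotypic components is exact for compact $K$. Every smooth $V$ is a quotient of $\bigoplus_K \mathrm{ind}_K^Q(V^K)$.

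The functor $V\mapsto V_{Q'}$ goes from $\mathcal{M}(Q)$ to $\mathcal{M}(Q'')$. Its image is smooth, since a vector fixed by $K$ gives a class fixed by $KQ'/Q'$. One checks directly that
\[
V_Q=(V_{Q'})_{Q''}.
\]
Both functors are right exact.

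\emph{Key step.} The hypothesis needed for Grothendieck's theorem is that $(\cdot)_{Q'}$ carries projectives to $(\cdot)_{Q''}$-acyclic objects. I would prove the stronger statement that
\[
(\mathrm{ind}_K^Q W)_{Q'}\cong \mathrm{ind}_{KQ'/Q'}^{Q''}\bigl(W_{K\cap Q'}\bigr),
\]
which is projective in $\mathcal{M}(Q'')$ because $KQ'/Q'$ is compact open in $Q''$. Direct summands and direct sums then follow, since coinvariants commute with both.

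To establish the isomorphism, use Mackey theory for $Q'$ acting on $K\backslash Q$. Since $Q'$ is normal, the double cosets $K\backslash Q/Q'$ are identified with $Q''/(KQ'/Q')$. As a $Q'$-representation, $\mathrm{ind}_K^Q W$ therefore decomposes as the direct sum over these cosets of $\mathrm{ind}_{K_x\cap Q'}^{Q'} W^x$, where $K_x=x^{-1}Kx$. The $Q'$-coinvariants of each summand equal $(W^x)_{K_x\cap Q'}$ by Frobenius reciprocity for compact induction. This step requires some care: normalizations of the induction introduce modular characters of open compact subgroups, but these are trivial, so the isomorphism holds on the nose. Reassembling the summands as $Q''$ permutes the cosets gives the claimed compact induction.

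\emph{Conclusion.} Grothendieck's theorem now yields
\[
E^2_{p,q}=L_p(\cdot)_{Q''}\bigl(L_q(\cdot)_{Q'}V\bigr)\Rightarrow L_{p+q}(\cdot)_Q V.
\]
It remains to identify $L_q(\cdot)_{Q'}$, computed in $\mathcal{M}(Q)$, with $\RH_q(Q',V)$, computed in $\mathcal{M}(Q')$. This holds because restriction from $\mathcal{M}(Q)$ to $\mathcal{M}(Q')$ is exact and sends projectives to projectives. Indeed, by the Mackey decomposition above, the restriction of $\mathrm{ind}_K^Q W$ to $Q'$ is a direct sum of $\mathrm{ind}_{K_x\cap Q'}^{Q'}W^x$, each projective in $\mathcal{M}(Q')$.

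I expect the main obstacle to be the Mackey-type computation in the key step, in particular making the coset decomposition and the coinvariant identification rigorous for general totally disconnected $Q$.
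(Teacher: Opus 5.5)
Your proposal is correct. The paper gives no proof of its own here; it simply quotes the spectral sequence from \cite[Theorem 5]{H}. Your argument, Grothendieck's spectral sequence for the factorization $V_Q=(V_{Q'})_{Q''}$ in the categories of smooth representations with $Q'$ closed, is the standard derivation.

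The step you flag as the main obstacle can be bypassed by adjunction.
\begin{itemize}
\item The functor $(\cdot)_{Q'}\colon\mathcal{M}(Q)\to\mathcal{M}(Q'')$ is left adjoint to the exact inflation functor. So $\Hom_{Q''}(P_{Q'},\cdot)=\Hom_Q(P,\mathrm{Inf}(\cdot))$ is exact for projective $P$. Yoneda then gives your formula $(\mathrm{ind}_K^QW)_{Q'}\cong\mathrm{ind}_{KQ'/Q'}^{Q''}(W_{K\cap Q'})$ in one line.
\item Similarly, restriction to $Q'$ is left adjoint to smooth induction $\Ind_{Q'}^{Q}$, which is exact by Bernstein--Zelevinsky. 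Hence restriction preserves projectives.
\end{itemize}

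Your Mackey route also works. Since $K$ is open and $Q'$ is normal, each $KxQ'=KQ'x$ is open and closed, so the restriction is an honest direct sum. It is indexed by $KQ'\backslash Q\cong (KQ'/Q')\backslash Q''$, i.e.\ right cosets rather than the left cosets you wrote, which is harmless. Matching compact supports only needs $Q'\to K_x\backslash K_xQ'$ to be a closed map. This holds because $K_x$ is compact, so no $\sigma$-compactness is required.

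The explicit computation gives a concrete description of the resulting projectives of $Q''$. The adjunction argument is shorter and avoids the coset bookkeeping entirely.
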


By passing to the conjugate group $ P_\gamma = \gamma H_\gamma \gamma^{-1} $, it suffices to show that
\[
    \RH_i ( P_\gamma, W_\gamma ) := \RH_i 
    \left(
        P_\gamma, \tau \otimes \rho_P|_{ P_\gamma } \otimes { \left( \delta_{ H_\gamma }^{-1} \right) }^{ \gamma^{-1} } \otimes \left( \varphi_H^{-1} \right)^{ \gamma^{-1} }
    \right)
    =0
\]
for $ i = 0, 1 $. Recall that by the Levi decomposition, $ P = MN $, which induces a projection
\[
    \mathrm{pr}_M: P \rightarrow M, \quad
    \begin{bmatrix}
        g_1 & X   & Y  \\
            & g_2 & Z  \\
            &     & g_3
    \end{bmatrix}
    \mapsto
    \begin{bmatrix}
        g_1 &     &    \\
            & g_2 &    \\
            &     & g_3
    \end{bmatrix}
    ,
\]
with $ \ker \mathrm{pr}_M = N $. Set $ N_\gamma' := \ker( \mathrm{pr}_M|_{ P_\gamma } ) = N \cap \gamma H \gamma^{-1} $ and $ M_\gamma' := \mathrm{pr}_M ( P_\gamma ) \cong P_\gamma / N_\gamma' $. In what follows, we apply this spectral sequence to the groups $P_{\gamma}$, $N_\gamma'$ and $M_\gamma'$. 

\begin{lemd}
    For each orbit $ \mathcal{O}_\gamma $ and $ i \ge 0 $, the homology group $ \RH_i \left( N_\gamma', W_\gamma \right) $ vanishes for all $ i > 0 $, and also for $ i = 0 $ if the character $\psi_H$ is non-trivial on $N_\gamma$. More precisely, for $ \gamma \in \Gamma $,
    \[
        \RH_i \left( N_\gamma', W_\gamma \right) =
        \begin{cases}
            W_\gamma, & \text{if } i = 0 \text{ and } \gamma \in \Gamma',\\
            0,        & \text{otherwise}.
    \end{cases}
    \]
    Here $\Gamma$ and $\Gamma'$ are sets of representatives of orbits defined near Lemma $ \ref{truestabilizer} $.
\end{lemd}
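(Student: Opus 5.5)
We will exploit that $N_\gamma' = N\cap\gamma H\gamma^{-1}$ is a unipotent group, a union of compact open subgroups, so its smooth homology is exact. Concretely, $\RH_i(N_\gamma', V)=0$ for all $i>0$ and every smooth $V$, and $\RH_0$ is the exact coinvariant functor. This settles the vanishing for $i>0$ immediately, and the remaining work is to compute $\RH_0(N_\gamma',W_\gamma)$.

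First we determine how $N_\gamma'$ acts on $W_\gamma=\tau\otimes\rho_P|_{P_\gamma}\otimes(\delta_{H_\gamma}^{-1})^{\gamma^{-1}}\otimes(\varphi_H^{-1})^{\gamma^{-1}}$. Since $\tau$ is extended trivially across $N$, and $\rho_P$, $\delta_{H_\gamma}$ are trivial on unipotent elements, $N_\gamma'$ acts on $W_\gamma$ by the single character $n\mapsto\varphi_H^{-1}(\gamma^{-1}n\gamma)$. Moreover $\gamma^{-1}N_\gamma'\gamma=\gamma^{-1}N\gamma\cap H=N_\gamma$, and $\eta_H$ is trivial on $N_\gamma\subseteq N$. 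Hence $N_\gamma'$ acts on $W_\gamma$ through the character $\psi_H^{-1}|_{N_\gamma}$, transported by conjugation. The coinvariants of a character twist of a vector space are therefore all of $W_\gamma$ if the character is trivial, and $0$ otherwise: when $\psi_H(n_0)\neq1$, every $w$ equals $(1-\psi_H^{-1}(n_0))^{-1}(w-n_0\cdot w)$.

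It remains to match triviality of $\psi_H|_{N_\gamma}$ with the condition $\gamma\in\Gamma'$. Lemma \ref{truestabilizer} gives one direction: triviality forces $\gamma$ into the orbit of an element of $\Gamma'$. For the converse, we check the four representatives directly using Proposition \ref{groupsforgamma}. For $\gamma_0$ we have $N_{\gamma_0}=\{1_6\}$. In cases (a) and (b), $N_\gamma$ is the set of matrices $\mathsf H_\gamma(1,1,0,0,0,y)$; their $X$ and $Y$ blocks have zero diagonal (the entry $y$ lies in the $Z$ block or off-diagonal), so $\tr X+\tr Y=0$. In case (c), the entries $y_1,y_2,y_3$ are placed at positions $(1,2)$, $(3,4)$, $(5,6)$, $(1,4)$, $(3,6)$. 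In the $X$ block (rows 1–2, columns 3–4), $y_2$ sits at entry $(1,2)$ and the diagonal is $0$. In the $Y$ block (rows 3–4, columns 5–6), $y_3$ is off-diagonal. So the trace is $0$ here as well.

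Since the statement is phrased for representatives, the equality holds for the chosen $\gamma\in\Gamma$ in the orbits listed. The main obstacle is bookkeeping: one must confirm that, for $\gamma\in\Gamma$ equal to an element of $\Gamma'$ as listed, the computation above applies verbatim, and that, for $\gamma\notin\Gamma'$, Lemma \ref{truestabilizer} indeed yields non-triviality of $\psi_H$ on $N_\gamma$. We then read the statement as concerning the chosen representatives, with $\Gamma'$ being exactly the representatives of the orbits that survive.
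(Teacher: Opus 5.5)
Your proposal is correct and follows the paper's proof. Exactness of $N_\gamma'$-coinvariants kills $\RH_i$ for $i>0$, and $N_\gamma'$ acts on $W_\gamma$ through the transported character $\psi_H^{-1}$, so the coinvariants are all of $W_\gamma$ or zero. Lemma \ref{truestabilizer} then matches triviality with $\Gamma'$. Your explicit check via Proposition \ref{groupsforgamma} that $\psi_H|_{N_\gamma}$ is trivial for $\gamma\in\Gamma'$, and your reading of $\Gamma'$ as the chosen orbit representatives, fill in what the paper leaves implicit.

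One small correction: in case (c), $N_\gamma$ is not contained in $N$, since its $\GL_2$-component is $\begin{bmatrix}1&y_1\\0&1\end{bmatrix}$. However, $\eta_H$ is still trivial on $N_\gamma$ because that component is unipotent. Likewise, in case (c) the block you read is $gX$ rather than $X$, but $\tr X=0$ still holds because $X=g^{-1}(gX)$ with $g$ upper unitriangular and $gX$ strictly upper triangular.
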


\begin{proof}
    By \cite[Proposition 1.9]{B-Z}, the coinvariant functor $ ( \cdot )_{ N_\gamma } $ is exact, which implies the vanishing of homology groups $ H_i ( N_\gamma, \cdot ) $ for $ i  > 0 $. We turn to the case $ i = 0 $. Since $\tau$, $\rho_P$, and $ \delta_{ H_\gamma }^{ \gamma^{-1} } $ are all trivial on $N_\gamma'$ , $N_\gamma'$ acts on $W_\gamma$ via the character $ \psi_H^{ \gamma^{-1} } $. Suppose $\psi_H$ is not trivial on $N_\gamma$, and choose $ n_0 \in N_\gamma $ such that $ \psi_H ( n_0 ) \ne 1 $. Then we have
    \[
        \begin{aligned}
            W_\gamma \supseteq & \Span_\mathbb{C} \left\{ n' \cdot w - w \mid n' \in  N_\gamma', w \in W_\gamma \right\}               \\
                     \supseteq & \Span_\mathbb{C} \left \{ \left( \psi_H ( n_0 ) - 1 \right) w \mid w \in W_\gamma \right\} = W_\gamma,
        \end{aligned}
    \]
    which implies that 
    \[
        \RH_0 \left( N_\gamma', W_\gamma \right) = W_\gamma / \Span_\mathbb{C} \left\{ n' \cdot w - w \mid n' \in N_\gamma', w \in W_\gamma \right\} = 0.
    \]
    Conversely, if $ \psi_H|_{ N_\gamma } = 1 $, then $W_\gamma$ is a trivial representation of $N_\gamma'$, from which it follows that $ \RH_0 \left( N_\gamma', W_\gamma \right) = W_\gamma $. The second assertion is a direct consequence of Lemma $ \ref{truestabilizer} $.
\end{proof}

Combining this lemma with the Hochschild-Serre spectral sequence, the terms $ E_{p,q}^2 = H_p ( M_\gamma', H_q ( N_\gamma', W_\gamma ) ) $ vanish for all $ q > 0 $. Consequently, we obtain a natural isomorphism for each $ i \ge 0 $:
\[
    \RH_i \left( P_\gamma, W_\gamma \right) \cong \RH_i \left( M_\gamma', \RH_0 \left( N_\gamma', W_\gamma \right) \right),
\]
which vanishes except when $ i = 0 $ and the orbit $ \mathcal{O}_\gamma $ contains any of the following elements:
\[
    (15)(26) (=\gamma_0), (152643), (134625), (13)(25)(46).
\]
In order to calculate the homology groups $ \RH_i \left( P_\gamma, W_\gamma \right) $, which reduces to $ \RH_i \left( M_\gamma', W_\gamma \right) $ when $ \gamma \in \Gamma' \setminus \{ \gamma_0 \} $, we give the following parameterizations for $M_\gamma'$.

\begin{prpd}
    The groups $M_\gamma'$ for $ \gamma \in \Gamma' \setminus \{ \gamma_0 \} $ are described as follows:
    \begin{itemize}
        \item[(a)] For $ \gamma = (152643) $: $M_\gamma'$ consists of the matrices of the form
        \[
            \begin{bmatrix}
                g & gu &   &    &   &   \\
                  & g  &   &    &   &   \\
                  &    & h & hv &   &   \\
                  &    &   & h  &   &   \\
                  &    &   &    & g & hx\\
                  &    &   &    &   & h
            \end{bmatrix}
            =
            \begin{bmatrix}
                1 & u &   &   &   &  \\
                  & 1 &   &   &   &  \\
                  &   & 1 & v &   &  \\
                  &   &   & 1 &   &  \\
                  &   &   &   & 1 & x\\
                  &   &   &   &   & 1
            \end{bmatrix}
            \begin{bmatrix}
                g &   &   &   &   &  \\
                  & g &   &   &   &  \\
                  &   & h &   &   &  \\ 
                  &   &   & h &   &  \\
                  &   &   &   & g &  \\
                  &   &   &   &   & h
            \end{bmatrix}
        \]
        for $ g, h \in k^\times $ and $ u, v, x \in k $.
        \item[(b)] For $ \gamma = (134625) $: $M_\gamma'$ consists of the matrices of the form
        \[
            \begin{bmatrix}
                g & hx &   &    &   &   \\
                  & h  &   &    &   &   \\
                  &    & g & gu &   &   \\
                  &    &   & g  &   &   \\
                  &    &   &    & h & hv\\
                  &    &   &    &   & h
            \end{bmatrix}
            =
            \begin{bmatrix}
                1 & x &   &   &   &  \\
                  & 1 &   &   &   &  \\
                  &   & 1 & u &   &  \\
                  &   &   & 1 &   &  \\
                  &   &   &   & 1 & v\\
                  &   &   &   &   & 1
            \end{bmatrix}
            \begin{bmatrix}
                g &   &   &   &   &  \\
                  & h &   &   &   &  \\
                  &   & g &   &   &  \\
                  &   &   & g &   &  \\
                  &   &   &   & h &  \\
                  &   &   &   &   & h
            \end{bmatrix}
        \]
        for $ g, h \in k^\times $ and $ u, v, x \in k $.
        \item[(c)] For $ \gamma = (13)(25)(46) $: $M_\gamma'$ consists of the matrices of the form
        \[
            \begin{bmatrix}
                g & gu &   &    &   &   \\
                  & g  &   &    &   &   \\
                  &    & g & hx &   &   \\
                  &    &   & h  &   &   \\
                  &    &   &    & h & hv\\
                  &    &   &    &   & h
            \end{bmatrix}
            =
            \begin{bmatrix}
                1 & u &   &   &   &  \\
                  & 1 &   &   &   &  \\
                  &   & 1 & x &   &  \\
                  &   &   & 1 &   &  \\
                  &   &   &   & 1 & v\\
                  &   &   &   &   & 1
            \end{bmatrix}
            \begin{bmatrix}
                g &   &   &   &   &  \\
                  & g &   &   &   &  \\
                  &   & g &   &   &  \\
                  &   &   & h &   &  \\
                  &   &   &   & h &  \\
                  &   &   &   &   & h
            \end{bmatrix}
        \]
        for $ g, h \in k^\times $ and $ u, v, x \in k $.
    \end{itemize}
    Let $U_\gamma'$ and $T_\gamma'$ denote the groups consisting of the unipotent and diagonal matrices, respectively, occurring in the decomposition of $M_\gamma'$ above. It follows that
    \[
        M_\gamma' = T_\gamma' \ltimes U_\gamma' \cong ( k^\times )^2 \ltimes ( N_2 )^3
    \]
    for these permutations $\gamma$.
\end{prpd}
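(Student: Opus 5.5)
The plan is to read off $M_\gamma'$ directly from the explicit description of $P_\gamma$ in Proposition \ref{groupsforgamma}. Since $M_\gamma' = \mathrm{pr}_M(P_\gamma)$ and $\mathrm{pr}_M$ simply keeps the three diagonal $2\times 2$ blocks of an element of $P$, it suffices to apply $\mathrm{pr}_M$ to $\mathsf{P}_\gamma(g,h,\dots)$ for each of the three representatives. The parameters range freely over $(k^\times)^2\times k^4$ (resp.\ $(k^\times)^2\times k^6$), so the image is exactly the set of projected matrices as the parameters vary.

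For $\gamma=(152643)$, the diagonal blocks of $\mathsf{P}_\gamma(g,h,x,u,v,y)$ are
\[
\begin{bmatrix} g & gu\\ & g\end{bmatrix},\quad \begin{bmatrix} h & hv\\ & h\end{bmatrix},\quad \begin{bmatrix} g & hx\\ & h\end{bmatrix}.
\]
The entry $y$ lies outside the diagonal blocks and disappears. This is the asserted form in (a). Cases (b) and (c) are identical inspections of the blocks of the corresponding $\mathsf{P}_\gamma$; in (c) the variables $y_1,y_2,y_3$ all sit off the diagonal blocks. Since $u,v,x$ and $g,h$ are independent free parameters, every matrix of the displayed form occurs.

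Next I would verify the factorizations by the elementary identities
\[
\begin{bmatrix}1&a\\&1\end{bmatrix}\begin{bmatrix}g&\\&h\end{bmatrix}=\begin{bmatrix}g&ah\\&h\end{bmatrix},\qquad
\begin{bmatrix}1&a\\&1\end{bmatrix}\begin{bmatrix}g&\\&g\end{bmatrix}=\begin{bmatrix}g&ag\\&g\end{bmatrix},
\]
applied blockwise. This gives $M_\gamma'=U_\gamma'T_\gamma'$. For the semidirect product I would check three things:
\begin{itemize}
\item $U_\gamma'\cong N_2^3$, via the three independent upper-right entries;
\item $T_\gamma'\cong(k^\times)^2$, via $(g,h)$;
\item $T_\gamma'\cap U_\gamma'=\{1_6\}$, and $T_\gamma'$ normalizes $U_\gamma'$, because conjugating a unipotent $2\times2$ block by a diagonal one rescales its off-diagonal entry.
\end{itemize}
Hence $M_\gamma'=T_\gamma'\ltimes U_\gamma'\cong(k^\times)^2\ltimes(N_2)^3$.

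There is no real obstacle here. The only point requiring care is correctly locating the diagonal blocks in the $6\times6$ matrices of Proposition \ref{groupsforgamma}, and that is a routine inspection.
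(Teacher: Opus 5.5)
Your proposal is correct and follows essentially the same route as the paper, which states this result without a separate proof as a direct consequence of Proposition \ref{groupsforgamma}. Applying $\mathrm{pr}_M$ to the explicit parametrization $\mathsf{P}_\gamma$ gives exactly the diagonal blocks you identified in all three cases. Your blockwise factorization, the trivial intersection $T_\gamma'\cap U_\gamma'=\{1_6\}$, and the normalization of $U_\gamma'$ by $T_\gamma'$ then yield $M_\gamma'=T_\gamma'\ltimes U_\gamma'$.
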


By a similar argument, we apply the Hochschild-Serre spectral sequence to the decomposition $ M_\gamma' = T_\gamma' \ltimes U_\gamma' $ and note that $ \RH_i \left( U_\gamma', W_\gamma \right) = 0 $ for $ i > 0 $. It follows that 
\[
    \RH_i \left( P_\gamma, W_\gamma \right) = \RH_i \left( M_\gamma', W_\gamma \right) = \RH_i \left( T_\gamma', \RH_0 ( U_\gamma', W_\gamma ) \right)
\]
for $ \gamma \in \Gamma' \setminus \{ \gamma_0 \} $ and $ i \ge 0 $. In these cases, $ \rho_P^{ \gamma }|_{ H_\gamma } \otimes \delta^{-1}_{ H_\gamma } = 1 $ by Proposition \ref{groupsforgamma}, and thus the representations $W_\gamma$ reduce to $ \tau \otimes \left( \varphi_H^{-1} \right)^{ \gamma^{-1} } $. For each representation $V$ of $\GL_2(k)$ and additive character $\xi$ of $k$, let $J_{\xi}(V)$ denote the twisted Jacquet module $ \RH_0 ( N_2, V \otimes \xi^{-1} ) $, then we obtain
\[
    \begin{aligned}
        \RH_0 \left( U_\gamma', W_\gamma \right)
        =& \RH_0 \left( U_\gamma', ( \tau_1 \otimes \tau_2 \otimes \tau_3 ) \otimes \left. \left( \varphi_H^{-1} \right)^{ \gamma^{-1} } \right|_{U_\gamma'} \right) \otimes \left. \left( \varphi_H^{-1} \right)^{ \gamma^{-1} } \right|_{T_\gamma'}\\
        =& \RH_0 \left( U_\gamma', \left( \tau_1 \otimes \psi^{-1} \right) \otimes \left( \tau_2 \otimes \psi^{-1} \right) \otimes \tau_3 \right) \otimes \left. \left( \varphi_H^{-1} \right)^{ \gamma^{-1} } \right|_{T_\gamma'}                                       \\
        =& \left( J_\psi ( \tau_1 ) \otimes J_\psi ( \tau_2 ) \otimes J ( \tau_3 ) \right) \otimes \left. \left( \varphi_H^{-1} \right)^{ \gamma^{-1} } \right|_{T_\gamma'},
    \end{aligned}
\]
where $N_2$ acts on $\psi$ via the isomorphism $ N_2 \cong k $.
For each $ \gamma \in \Gamma' \setminus \{ \gamma_0 \} $, we now establish a sufficient condition under which the homology groups $\RH_i \left( P_\gamma, W_\gamma \right) $ vanish.

\begin{lemd}
    Suppose that $ \gamma = (152643) $. If the space
    \[
        \Hom_{T_2}
        \left(
            J( \tau_3 )_\mathrm{ss}, ( \eta \circ \det ) \otimes \left( \omega_{ \tau_1 }^{-1} \otimes \omega_{ \tau_2 }^{-1} \right)
        \right)=0,
    \]
    then 
    \[
        \RH_i \left( P_\gamma, W_\gamma \right) = 0, \quad i \ge 0.
    \]
\end{lemd}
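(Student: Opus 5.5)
The plan is to start from the reduction already established just before the statement: for $\gamma=(152643)$ we have
\[
\RH_i(P_\gamma,W_\gamma)\cong \RH_i\!\left(T_\gamma',\ \bigl(J_\psi(\tau_1)\otimes J_\psi(\tau_2)\otimes J(\tau_3)\bigr)\otimes \left.\left(\varphi_H^{-1}\right)^{\gamma^{-1}}\right|_{T_\gamma'}\right).
\]
It then suffices to identify the action of $T_\gamma'\cong (k^\times)^2$ on this space explicitly, and to show that torus homology vanishes when no composition factor is the trivial character.

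\textbf{Step 1 (the $T_\gamma'$-action).} An element of $T_\gamma'$ is $t=\diag(g,g,h,h,g,h)$. On $\tau_1$ it acts through the central element $g\cdot 1_2$, hence by the scalar $\omega_{\tau_1}(g)$. On $\tau_2$ it acts through $h\cdot 1_2$, hence by $\omega_{\tau_2}(h)$. On $\tau_3$ it acts through $\diag(g,h)\in T_2$. These central elements normalize $N_2$ and commute with it, so the actions descend to the (twisted) Jacquet modules. The spaces $J_\psi(\tau_1)$ and $J_\psi(\tau_2)$ are finite dimensional (of dimension $\le 1$), and $J(\tau_3)$ has finite length (dimension $\le 2$). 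Thus $T_\gamma'$ acts on $J_\psi(\tau_1)\otimes J_\psi(\tau_2)$ by the scalar character $\omega_{\tau_1}(g)\omega_{\tau_2}(h)$.

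Next I compute the restriction of $(\varphi_H^{-1})^{\gamma^{-1}}$ to $T_\gamma'$. Here $\gamma^{-1}t\gamma=\mathsf{H}_\gamma(g,h,0,0,0,0)$ by Proposition \ref{groupsforgamma}. Its unipotent part is trivial and its $H_0$-component is $\diag(g,h)$, so the character equals $\eta(gh)^{-1}$. Identifying $T_\gamma'$ with $T_2$ via $t\mapsto\diag(g,h)$, we obtain
\[
\RH_i(P_\gamma,W_\gamma)\cong \left(J_\psi(\tau_1)\otimes J_\psi(\tau_2)\right)\otimes \RH_i\!\left(T_2,\ J(\tau_3)\otimes (\omega_{\tau_1}\otimes\omega_{\tau_2})\otimes(\eta\circ\det)^{-1}\right).
\]
This is the step where I expect the real care to be needed: checking the conjugation $\gamma^{-1}t\gamma$ against the parametrizations, and the ordering of $g$ and $h$ relative to $\omega_{\tau_1}$ and $\omega_{\tau_2}$. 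Everything downstream depends on matching exactly the character appearing in the hypothesis.

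\textbf{Step 2 (vanishing of torus homology).} Take a composition series of the finite-length $T_2$-module $J(\tau_3)$. By the long exact homology sequences, it suffices to show that $\RH_i(T_2,\chi)=0$ for all $i$ whenever $\chi$ is a nontrivial character of $T_2\cong (k^\times)^2$.

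To see this, write $k^\times\cong \mathcal{O}^\times\times\mathbb{Z}$ and apply Hochschild–Serre (\cite[Theorem 5]{H}) to the compact open subgroup $(\mathcal{O}^\times)^2$. Coinvariants for a compact group are exact, so only $\RH_0$ survives there. If $\chi$ is nontrivial on $(\mathcal{O}^\times)^2$, that $\RH_0$ already vanishes. Otherwise $\chi$ factors through $\mathbb{Z}^2$ and is nontrivial on some generator $\varpi$. Then multiplication by $\chi(\varpi)-1$ is invertible on every term of the Koszul resolution computing $\RH_*(\mathbb{Z}^2,\chi)$, so all these groups vanish.

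\textbf{Step 3 (conclusion).} The twisted module $J(\tau_3)\otimes(\omega_{\tau_1}\otimes\omega_{\tau_2})\otimes(\eta\circ\det)^{-1}$ has a trivial composition factor exactly when $J(\tau_3)_{\mathrm{ss}}$ contains $(\eta\circ\det)\otimes(\omega_{\tau_1}^{-1}\otimes\omega_{\tau_2}^{-1})$. That is, exactly when the $\Hom_{T_2}$-space in the hypothesis is nonzero. Under the hypothesis every composition factor is a nontrivial character. Steps 1 and 2 then give $\RH_i(P_\gamma,W_\gamma)=0$ for all $i\ge 0$.
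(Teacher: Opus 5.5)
Your proposal is correct and follows the paper's proof. Both arguments use the same identification of the $T_\gamma'$-action, namely $\RH_i(P_\gamma,W_\gamma)\cong J_\psi(\tau_1)\otimes J_\psi(\tau_2)\otimes \RH_i\bigl(T_2, J(\tau_3)\otimes(\omega_{\tau_1}\otimes\omega_{\tau_2})\otimes(\eta^{-1}\circ\det)\bigr)$, then reduce to composition factors via long exact sequences, and conclude from the vanishing of torus homology for nontrivial characters. The only difference is that you compute $\RH_*(T_2,\chi)$ through the compact subgroup $(\mathcal{O}^\times)^2$ and a Koszul complex, while the paper uses K\"unneth and $\RH_*(k^\times,\chi)$. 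In your version the operative fact is that multiplication by $\chi(\varpi)-1$ is null-homotopic on the Koszul complex, hence zero on homology; being invertible on its terms is not enough on its own.
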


\begin{proof}
    Note that $T_\gamma'$ acts on $ J( \tau_1 ) \otimes J( \tau_2 ) \otimes J( \tau_3 ) $ by
    \[
        \tau_1 \left(
        \begin{bmatrix}
            g &  \\
              & g
        \end{bmatrix}
        \right) \otimes \tau_2 \left( 
        \begin{bmatrix}
            h &  \\
              & h
        \end{bmatrix}
        \right) \otimes \tau_3 \left(
        \begin{bmatrix}
            g &  \\
              & h
        \end{bmatrix}
        \right)
    \]
    and acts on $ \varphi_H^{ \gamma^{-1} } $ by $ \eta(gh) $. We have isomorphisms for all $ i \ge 0 $:
    \[
        \begin{aligned}
            \RH_i
            \left(
                T_\gamma', \RH_0 ( U_\gamma', W_\gamma )
            \right)
            \cong J_\psi ( \tau_1 ) 
            &\otimes J_\psi ( \tau_2 )\\
            &\otimes \RH_i
            \left(  
                T_2, J( \tau_3 ) \otimes \left( \omega_{ \tau_1 } \otimes \omega_{ \tau_2 } \right) \otimes \left( \eta^{-1} \circ \det \right) 
            \right).
        \end{aligned}
    \]
    Now consider the homology group of $T_2$. Arguing using the long exact sequence in homology, it is sufficient to show that the homology group remains zero if we replace $ J( \tau_3 ) $ by its semi-simplification $ J( \tau_3 )_\mathrm{ss} $. Write
    \[
        J( \tau_3 )_\mathrm{ss} = 
        \begin{cases}
            ( \xi_{11} \otimes \xi_{12} ) \oplus ( \xi_{21} \otimes \xi_{22} ), & \text{if } \tau_3 \text{ is a principal series representation,}\\
            \xi_1 \otimes \xi_2,                                                & \text{if } \tau_3 \text{ is a Steinberg representation,}       \\
            0,                                                                  & \text{if } \tau_3 \text{ is a supercuspidal representation.}
        \end{cases}
    \]
    In the case where $\tau_3$ is a principal series representation, we have
    \begin{equation}
        \label{ssspace}
        \begin{aligned}
             & \RH_i \left( T_2, J( \tau_3 )_\mathrm{ss} \otimes \left( \omega_{ \tau_1 } \otimes \omega_{ \tau_2 } \right) \otimes \left( \eta^{-1} \circ \det \right) \right)\\
            =& \bigoplus_{j = 1}^2 \RH_i \left( T_2, \left( \xi_{j1} \omega_{ \tau_1 } \eta^{-1} \right) \otimes \left( \xi_{j2} \omega_{ \tau_2 } \eta^{-1} \right) \right)   \\
            =& \bigoplus_{ \alpha + \beta = i } \bigoplus_{j = 1}^2 \RH_\alpha \left( k^\times, \xi_{j1} \omega_{ \tau_1 } \eta^{-1} \right) \otimes \RH_\beta \left( k^\times, \xi_{j2} \omega_{ \tau_2 } \eta^{-1} \right),
        \end{aligned}
    \end{equation}
    where the last equation follows from the K\"{u}nneth formula. Recall that for each character $\chi$ of $k^\times$,
    \[
        \RH_i ( k^\times, \chi ) =
        \begin{cases}
            \mathbb{C}, & \text{if } i = 0, 1 \text{ and } \chi = 1,\\
            0,          & \text{otherwise.}
        \end{cases}
    \]
    A straightforward calculation shows that
    \[
        ( \ref{ssspace} ) = 
        \begin{cases}
            \mathbb{C} \oplus \mathbb{C},                                     & \text{if } \xi_{l1} \omega_{ \tau_1 } = \xi_{l2} \omega_{ \tau_2 } = \eta \text{ and } i = 0, 2,                                                                                \\
            \mathbb{C} \oplus \mathbb{C} \oplus \mathbb{C} \oplus \mathbb{C}, & \text{if } \xi_{l1} \omega_{ \tau_1 } = \xi_{l2} \omega_{ \tau_2 } = \eta \text{ and } i = 1,\\
            0,                                                                & \text{otherwise.}
        \end{cases}
    \]
    Consequently, the vanishing of $ ( \ref{ssspace} ) $ for $ i = 0 $ guarantees its vanishing for all $ i \ge 0 $.
    Therefore, if $\tau_3$ is a principal series representation and 
    \[
        \Hom_{T_2}
        \left(
            J( \tau_3 )_\mathrm{ss},( \eta \circ \det ) \otimes \left( \omega_{ \tau_1 }^{-1} \otimes \omega_{ \tau_2 }^{-1} \right)
        \right)
        = 0,
    \]
    then by taking duality, we have
    \[
        \RH_0
        \left(
            T_2, J( \tau_3 )_\mathrm{ss} \otimes \left( \omega_{ \tau_1 } \otimes \omega_{ \tau_2 } \right) \otimes \left( \eta^{-1} \circ \det \right)
        \right)
        = 0.
    \]
    It follows that the spaces
    \[
        \RH_i
        \left(
            T_2, J( \tau_3 ) \otimes \left( \omega_{ \tau_1 } \otimes \omega_{ \tau_2 } \right) \otimes \left( \eta^{-1} \circ \det \right)
        \right)
    \]
    also vanish for all $ i \ge 0 $. By a similar argument, this assertion also holds when $\tau_3$ is a Steinberg representation, and holds automatically when $\tau_3$ is supercuspidal.
\end{proof}

The following analogous results hold for $ \gamma = (134625) $ and $ \gamma = (13)(25)(46) $, with proofs omitted.

\begin{lemd}
    Suppose that $ \gamma = (134625) $. If the space:
    \[
        \Hom_{T_2}
        \left(
            J( \tau_1 )_\mathrm{ss}, ( \eta \circ \det ) \otimes \left( \omega_{ \tau_2 }^{-1} \otimes \omega_{ \tau_3 }^{-1} \right)
        \right)
        = 0,
    \]
    then 
    \[
        \RH_i \left( P_\gamma, W_\gamma \right) = 0, \quad i \ge 0.
    \]
\end{lemd}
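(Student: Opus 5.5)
The argument runs parallel to the proof of the preceding lemma for $\gamma=(152643)$, with the roles of the three blocks permuted. By the reductions already made (Hochschild--Serre for $P_\gamma = M_\gamma' \ltimes N_\gamma'$, then for $M_\gamma' = T_\gamma' \ltimes U_\gamma'$), we have $\RH_i(P_\gamma,W_\gamma)\cong \RH_i\bigl(T_\gamma',\RH_0(U_\gamma',W_\gamma)\bigr)$. Also $W_\gamma=\tau\otimes(\varphi_H^{-1})^{\gamma^{-1}}$, since $\rho_P^\gamma|_{H_\gamma}\otimes\delta_{H_\gamma}^{-1}=1$ by Proposition \ref{groupsforgamma}(b).

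The first step is to compute $\RH_0(U_\gamma',W_\gamma)$ using the parametrizations in Proposition \ref{groupsforgamma}(b) and the description of $M_\gamma'$. Conjugating $\mathsf{P}_\gamma(1,1,x,u,v,0)$ back by $\gamma$ gives $\mathsf{H}_\gamma(1,1,x,u,v,0)$, whose $\psi_H$-value is $\psi(x+x)$ read off from the diagonal blocks $X,Y$. This shows that $(\varphi_H^{-1})^{\gamma^{-1}}$ is nontrivial on the $x$-coordinate, which sits in the upper unipotent of the first block, and trivial on $u$ and $v$, which sit in the second and third blocks. I will verify which blocks carry the character by direct matrix inspection; this is the one computation I expect to require care. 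I expect the result to be
\[
\RH_0(U_\gamma',W_\gamma)\cong J(\tau_1)\otimes J_{\psi'}(\tau_2)\otimes J_{\psi'}(\tau_3)\otimes (\varphi_H^{-1})^{\gamma^{-1}}|_{T_\gamma'}
\]
for some nontrivial $\psi'$, or more generally some arrangement in which the untwisted Jacquet module is that of $\tau_1$. That arrangement is consistent with the roles in $\mathsf{P}_\gamma$: block $1$ is $\diag(g,h)$, while blocks $2$ and $3$ are the scalars $g$ and $h$.

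Next comes the torus action. The group $T_\gamma'=\{\diag(g,h,g,g,h,h)\}$ acts through $\tau_1(\diag(g,h))\otimes\omega_{\tau_2}(g)\otimes\omega_{\tau_3}(h)$, and on $(\varphi_H^{-1})^{\gamma^{-1}}$ by $\eta(gh)^{-1}$. The twisted Jacquet modules are merely vector spaces on which the torus acts by the central characters, so they factor out as in the previous lemma. This gives
\[
\RH_i(T_\gamma',\RH_0(U_\gamma',W_\gamma))\cong J_{\psi'}(\tau_2)\otimes J_{\psi'}(\tau_3)\otimes \RH_i\bigl(T_2, J(\tau_1)\otimes(\omega_{\tau_2}\otimes\omega_{\tau_3})\otimes(\eta^{-1}\circ\det)\bigr).
\]

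Finally, I pass to $J(\tau_1)_{\mathrm{ss}}$ via the long exact sequence in homology. I then apply the Künneth formula and the computation of $\RH_i(k^\times,\chi)$ exactly as in (\ref{ssspace}): a summand $\xi_1\otimes\xi_2$ contributes to some $\RH_i$ only if $\xi_1\omega_{\tau_2}=\xi_2\omega_{\tau_3}=\eta$, which is precisely the condition $\Hom_{T_2}(\xi_1\otimes\xi_2,(\eta\circ\det)\otimes(\omega_{\tau_2}^{-1}\otimes\omega_{\tau_3}^{-1}))\neq0$. The hypothesis therefore forces all $\RH_i$ to vanish. The supercuspidal case is trivial because $J(\tau_1)=0$, and the Steinberg case is the same computation with a single summand.
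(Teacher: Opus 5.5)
Your overall strategy is the one the paper intends: it omits this proof as analogous to the case $\gamma=(152643)$. Your torus action, the factoring out of the twisted Jacquet modules, and the passage to $J(\tau_1)_{\mathrm{ss}}$ followed by K\"unneth are all correct. However, the one computation that is specific to this $\gamma$ is stated incorrectly. As written, it also contradicts the formula you then adopt.

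In $\mathsf{H}_\gamma(g,h,x,u,v,y)$ the common diagonal block is $g_0=\begin{bmatrix} g & hx\\ 0 & h\end{bmatrix}$. So the entries $hx$ at positions $(1,2),(3,4),(5,6)$ lie in the $H_0$-part, where $\psi_H$ is trivial; they are not in the $X$ or $Y$ blocks. The $X$-block (rows $1$--$2$, columns $3$--$4$) is $g_0X=\begin{bmatrix} gu & y\\ 0 & hv\end{bmatrix}$. This gives $X=\begin{bmatrix} u & g^{-1}(y-xhv)\\ 0 & v\end{bmatrix}$, together with $Y=Z=0$. Hence $\varphi_H(\mathsf{H}_\gamma(g,h,x,u,v,y))=\eta(gh)\psi(u+v)$, not $\psi(2x)$.

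Under $\mathsf{P}_\gamma=\gamma\mathsf{H}_\gamma\gamma^{-1}$, the coordinates $u$ and $v$ become the unipotent coordinates of the second and third diagonal blocks $\begin{bmatrix} g & gu\\ & g\end{bmatrix}$ and $\begin{bmatrix} h & hv\\ & h\end{bmatrix}$. The coordinate $x$ sits in the first block $\begin{bmatrix} g & hx\\ & h\end{bmatrix}$. So $(\varphi_H^{-1})^{\gamma^{-1}}$ is trivial on $x$ and equals $\psi^{-1}(u+v)$ on $u,v$, which is the opposite of what you claimed.

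Taken literally, your claim would give $J_\psi(\tau_1)\otimes J(\tau_2)\otimes J(\tau_3)$, not the formula you wrote down. That configuration is in fact impossible. Conjugation by $\diag(g,h,g,g,h,h)\in T'_\gamma$ sends the $U'_\gamma$-coordinates $(x,u,v)$ to $(gx/h,u,v)$. A character of $M'_\gamma$ is invariant under this conjugation, so it must be trivial in $x$. This invariance check is the clean justification for your heuristic remark that ``the untwisted Jacquet module is that of $\tau_1$''.

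With the correct computation you get exactly
\[
\RH_0(U'_\gamma,W_\gamma)\cong J(\tau_1)\otimes J_\psi(\tau_2)\otimes J_\psi(\tau_3)\otimes \eta^{-1}(gh),
\]
so $\psi'=\psi$. The remainder of your argument then goes through unchanged.
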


\begin{lemd}
    Suppose that $ \gamma = (13)(25)(46) $. If the space
    \[
        \Hom_{T_2}
        \left(
            J( \tau_2 )_\mathrm{ss}, ( \eta \circ \det ) \otimes \left( \omega_{ \tau_1 }^{-1} \otimes \omega_{ \tau_3 }^{-1} \right)
        \right)
        = 0, 
    \]
    then 
    \[
        \RH_i \left( P_\gamma, W_\gamma \right) = 0, \quad i \ge 0.
    \]
\end{lemd}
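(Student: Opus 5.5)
The plan is to run the argument of the lemma for $\gamma=(152643)$ again, now for $\gamma=(13)(25)(46)$, and to change only the bookkeeping of which torus entries act on which factor. From the parameterization of $M_\gamma'$ in case (c), the torus $T_\gamma'$ consists of $\diag(g,g,g,h,h,h)$. Hence it acts on $\tau_1$ through the scalar $\diag(g,g)$, on $\tau_3$ through the scalar $\diag(h,h)$, and on $\tau_2$ through $\diag(g,h)$. The unipotent part $U_\gamma'$ is made of three copies of $N_2$, with parameters $u$, $x$, $v$ in the blocks of $\tau_1$, $\tau_2$, $\tau_3$ respectively.

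First I will identify which of these copies of $N_2$ sees $\psi$. To do this I compute $(\varphi_H^{-1})^{\gamma^{-1}}$ on $P_\gamma$ by reading $\mathsf{H}_\gamma$ and $\mathsf{P}_\gamma$ in Proposition \ref{groupsforgamma}(c). The entries of $\mathsf{H}_\gamma$ carrying $\tr X+\tr Y$ are the positions $(1,3),(2,4),(3,5),(4,6)$. These are $0$, $hv$, $gu$ and $0$, so after normalizing by $g$ and $h$ the character is $\psi(u+v)$. Comparing with $\mathsf{P}_\gamma$, the parameter $u$ sits in the $\tau_1$ block and $v$ in the $\tau_3$ block, while $x$ in the $\tau_2$ block is untwisted. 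On the torus, $\eta_H$ becomes $\eta(gh)$. Therefore
\[
\RH_0(U_\gamma',W_\gamma)\cong J_\psi(\tau_1)\otimes J(\tau_2)\otimes J_\psi(\tau_3)\otimes\eta^{-1}(gh).
\]
Since $\tau_1$ and $\tau_3$ are irreducible, the scalars $\diag(g,g)$ and $\diag(h,h)$ act on $J_\psi(\tau_1)$ and $J_\psi(\tau_3)$ by $\omega_{\tau_1}(g)$ and $\omega_{\tau_3}(h)$. The $T_\gamma'\cong T_2$ action then factors, and gives
\[
\RH_i(P_\gamma,W_\gamma)\cong J_\psi(\tau_1)\otimes J_\psi(\tau_3)\otimes \RH_i\bigl(T_2, J(\tau_2)\otimes(\omega_{\tau_1}\otimes\omega_{\tau_3})\otimes(\eta^{-1}\circ\det)\bigr).
\]
This rests on the reductions already established: the spectral sequence collapse, the vanishing of higher $U_\gamma'$-homology, and the triviality $\rho_P^\gamma|_{H_\gamma}\otimes\delta_{H_\gamma}^{-1}=1$ from Proposition \ref{groupsforgamma}.

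Next I replace $J(\tau_2)$ by its semisimplification. Its length is at most $2$, so the long exact sequence in $T_2$-homology applied to the composition series reduces the problem to characters $\xi_1\otimes\xi_2$ of $T_2$. The Künneth formula together with $\RH_i(k^\times,\chi)=0$ for $\chi\ne1$ shows that each term vanishes in all degrees unless $\xi_1\omega_{\tau_1}=\xi_2\omega_{\tau_3}=\eta$. That condition is exactly the nonvanishing of $\Hom_{T_2}(\xi_1\otimes\xi_2,(\eta\circ\det)\otimes(\omega_{\tau_1}^{-1}\otimes\omega_{\tau_3}^{-1}))$. The hypothesis excludes it for every constituent, so all homology vanishes. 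When $\tau_2$ is supercuspidal, $J(\tau_2)=0$ and the vanishing is immediate.

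The only real obstacle is the first step. I must read off correctly from the matrices which blocks carry the $\psi$-twist, and which diagonal entries of the $\tau_2$ block are $g$ and which are $h$. This ordering determines whether the condition involves $\omega_{\tau_1}^{-1}\otimes\omega_{\tau_3}^{-1}$ or the swapped $\omega_{\tau_3}^{-1}\otimes\omega_{\tau_1}^{-1}$. Here the $\tau_2$ block of $\mathsf{P}_\gamma$ is $\begin{bmatrix} g & hx\\ & h\end{bmatrix}$, so the order is $(g,h)$, matching the stated condition.
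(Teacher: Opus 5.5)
Your proposal is correct and follows the paper's approach. The paper omits this proof as analogous to the case $\gamma=(152643)$, and your adaptation is exactly the intended one: the twist $\psi(u+v)$ lands on the $\tau_1$ and $\tau_3$ blocks, the torus acts on the $\tau_2$ block through $\diag(g,h)$, and everything reduces to $\RH_i\bigl(T_2, J(\tau_2)\otimes(\omega_{\tau_1}\otimes\omega_{\tau_3})\otimes(\eta^{-1}\circ\det)\bigr)$, handled by semisimplification and K\"unneth. One small point to tighten: you read $\tr X+\tr Y$ off the entries $(1,3),(2,4),(3,5),(4,6)$ of $\mathsf{H}_\gamma$ after dividing by $g$ and $h$. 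This is legitimate here only because the $(2,3)$ and $(4,5)$ entries of $\mathsf{H}_\gamma$ vanish, so the off-diagonal $y_1$ in the $\GL_2$-component contributes nothing to the traces.
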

Recall the definition of good representations (before Theorem \ref{mainthm1}). If $ \tau = \tau_1 \otimes \tau_2 \otimes \tau_3 $ is good, then it satisfies all the conditions in the three preceding lemmas. Therefore, for any orbit $ \mathcal{O}_\gamma $ that is not the unique open orbit, $ ( \ref{targetspaces} ) $ holds, and thus Theorem $ \ref{mainthm1} $ is proved according to the analysis at the end of the last subsection.

\subsection{Criterion for good representations}

In this subsection we prove Proposition \ref{goodcr}, which provides a criterion for when a representation $ \tau = \tau_1 \otimes \tau_2 \otimes \tau_3 $ of $M$ is good, where each $\tau_i$ is an irreducible admissible smooth representation of $\GL_2(k)$. We prove this criterion by direct computations of the spaces
\[
    \Hom_{T_2}
    \left(
        J( \tau_i )_\mathrm{ss} ( \eta \circ \det ) \otimes \left( \omega_{ \tau_j }^{-1} \otimes \omega_{ \tau_l }^{-1} \right)
    \right)
\]
for $ i = 1, 2, 3 $ and $ j, l \in \{ 1, 2, 3 \} \setminus \{ i \} $ with $ j < l $. According to \cite[Restriction-Induction Lemma (in section 9)]{B-H}, the semi-simplifications $ J( \tau_i )_\mathrm{ss} $ are listed below by cases.                     
\begin{itemize}
    \item If $\tau_i$ is supercuspidal, then 
    \[
        J( \tau_{i} )_\mathrm{ss} = J( \tau_i ) = 0.
    \]
    \item If $ \tau_i = \chi_i \circ \det $ for some character $\chi_i$, then 
    \[
        J( \tau_{i} )_\mathrm{ss} = J( \tau_i ) = \chi_i \otimes \chi_i.
    \]
    \item If $ \tau_i = \mathrm{St} ( \chi_i ) $ for some character $\chi_i$, then 
    \[
        J( \tau_{i} )_\mathrm{ss} = J( \tau_i ) =( \chi_i \otimes \chi_i ) \otimes \delta_{B_2} = \chi_i \absk{ \cdot } \otimes \chi_i \absk{ \cdot }^{-1}.
    \]
    \item If $ \tau_i = \Ind^{ \GL_2(k) }_{B_2} ( \chi_{ij} \otimes \chi_{il} ) $ for characters $\chi_{ij}$ and $\chi_{il}$, then 
    \[
        \begin{aligned}
            J( \tau_{i} )_\mathrm{ss} =& ( \chi_{ij} \otimes \chi_{il} ) \otimes \delta_{B_2}^{1/2} \oplus ( \chi_{il} \otimes \chi_{ij} ) \otimes \delta_{B_2}^{1/2}\\
                                      =& \left( \chi_{ij} \absk{ \cdot }^{1/2} \otimes \chi_{il} \absk{ \cdot }^{-1/2} \right) \oplus \left( \chi_{il} \absk{ \cdot }^{1/2} \otimes \chi_{ij} \absk{ \cdot }^{-1/2} \right).
        \end{aligned}
    \]
\end{itemize}
Assume first that $ \tau_i = \chi_i \circ \det $ or $ \tau_i = \mathrm{St} ( \chi_i ) $. In both cases $ J( \tau_i )_\mathrm{ss} $ is one-dimensional. Therefore, the vanishing of the Hom-space reduces to a comparison of characters on $T_2$. Specifically,  
\[
    \Hom_{T_2}
    \left(
        J( \tau_i )_\mathrm{ss},( \eta \circ \det ) \otimes \left( \omega_{ \tau_j }^{-1} \otimes \omega_{ \tau_l }^{-1} \right)
    \right)
    = 0
\]
if and only if:
\begin{itemize}
    \item $ \chi_i \otimes \chi_i \ne ( \eta \circ \det ) \otimes \left( \omega_{ \tau_j }^{-1} \otimes \omega_{ \tau_l }^{-1} \right) $ for $ \tau_i = \chi_i \circ \det $;
    \item $ \chi_i \otimes \chi_i \ne ( \eta \circ \det ) \otimes \left( \omega_{ \tau_j }^{-1} \otimes \omega_{ \tau_l }^{-1} \right) \otimes \delta_{B_2}^{-1}$ for $ \tau_i = \mathrm{St} ( \chi_i ) $.
\end{itemize}
Suppose that $ \tau_i = \Ind^{ \GL_2(k) }_{B_2} ( \chi_{ij} \otimes \chi_{il} ) $ is irreducible. By the decomposition of $ J( \tau_i )_\mathrm{ss} $, the target Hom-space vanishes if and only if the following two Hom-spaces vanish simultaneously:
\[
    \Hom_{T_2}
    \left(
        ( \chi_{ij} \otimes \chi_{il} ) \otimes \delta_{B_2}^{1/2},( \eta \circ \det ) \otimes \left( \omega_{ \tau_j }^{-1} \otimes \omega_{ \tau_l }^{-1} \right)
    \right)
\]
and
\[
    \Hom_{T_2}
    \left(
        ( \chi_{il} \otimes \chi_{ij} ) \otimes \delta_{B_2}^{1/2}, ( \eta \circ \det ) \otimes \left( \omega_{ \tau_j }^{-1} \otimes \omega_{ \tau_l }^{-1} \right)
    \right),
\]
which is equivalent to 
\[
    \chi_{ij} \otimes \chi_{il} \ne ( \eta \circ \det ) \otimes \left( \omega_{ \tau_j }^{-1} \otimes \omega_{ \tau_l }^{-1} \right) \otimes \delta_{B_2}^{-1/2}
\]
and
\[
    \chi_{ij} \otimes \chi_{il} \ne ( \eta \circ \det ) \otimes \left( \omega_{ \tau_l }^{-1} \otimes \omega_{ \tau_j }^{-1} \right) \otimes \delta_{B_2}^{1/2}.
\]

\section{Proofs of Corollary \ref{mainthm2} and Corollary \ref{mainthm4}}

Let $ \tau = \tau_1 \otimes \tau_2 \otimes \tau_3 $ be a good representation. Now we consider the dimension of the space
\[
    \Hom_H ( \pi_\tau, \varphi_H ) = \Hom_{H_0} \left( \tau_1 \otimes \tau_2 \otimes \tau_3, \eta \circ \det \right).
\]
In this section, we further assume that $\tau_1$, $\tau_2$, and $\tau_3$ are infinite-dimensional, and the product of central characters satisfies $ \omega_{ \tau_1 } \omega_{ \tau_2 } \omega_{ \tau_3 } = \eta^2 $. For the case where $\tau_1$, $\tau_2$, and $\tau_3$ are not all essential discrete series representations, the nonvanishing criterion relies on \cite[Theorem 1.2]{Prasad}.

\begin{prpd}[{\cite[Theorem 1.2]{Prasad}}]
    Let $\tau_1$, $\tau_2$, and $\tau_3$, be infinite-dimensional, irreducible, admissible smooth representations of $\GL_2(k)$ with central characters satisfying $ \omega_{\tau_1} \omega_{\tau_2} \omega_{\tau_3} = \eta^2 $. Suppose that not all of them are essential discrete series representations. Then
    \[
        \dim \Hom_{ H_0 } \left( \tau_1 \otimes \tau_2 \otimes \tau_3, \eta \circ \det \right) = 1.
    \]
\end{prpd}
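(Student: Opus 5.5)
Since the product of central characters is $\eta^2$ and the statement is symmetric in the three factors, I may assume that $\tau_1$ is a principal series representation, $\tau_1 = \Ind^{\GL_2(k)}_{B_2}(\chi_1\otimes\chi_2)$. By \cite[Theorem 1.1]{Prasad} the space $\Hom_{H_0}(\tau_1\otimes\tau_2\otimes\tau_3,\eta\circ\det)$ has dimension at most $1$, so only nonvanishing needs proof.

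\textbf{Step 1: reduce to $B_2$.} I use the identification $\tau_1^\vee\cong\Ind_{B_2}^{\GL_2(k)}(\chi_1^{-1}\otimes\chi_2^{-1})$, Frobenius reciprocity, and the fact that $\Ind=\mathrm{ind}$ because $\GL_2(k)/B_2$ is compact. Together these give
\[
\Hom_{\GL_2(k)}(\tau_1\otimes\tau_2\otimes\tau_3,\eta\circ\det)\cong \Hom_{B_2}\bigl(\tau_2\otimes\tau_3,\ \mu\bigr),\qquad \mu:=(\eta\circ\det)\,(\chi_1^{-1}\otimes\chi_2^{-1})\,\delta_{B_2}^{1/2}.
\]
The problem therefore becomes the restriction of $\tau_2\otimes\tau_3$ to $B_2$.

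\textbf{Step 2: filter $\tau_2|_{B_2}$ via the Kirillov model.} The Kirillov model gives a $B_2$-stable filtration
\[
0\to \mathrm{ind}_{Z N_2}^{B_2}(\omega_{\tau_2}\psi)\to \tau_2|_{B_2}\to J(\tau_2)\to 0 .
\]
Here $Z$ is the center, and the subspace is $S(k^\times)$ with $B_2$ acting through $\omega_{\tau_2}$ and $\psi$. Tensoring with $\tau_3$ and using the projection formula $\mathrm{ind}(\cdot)\otimes\tau_3\cong\mathrm{ind}(\cdot\otimes\tau_3|)$ yields
\[
\Hom_{B_2}\bigl(\mathrm{ind}_{ZN_2}^{B_2}(\omega_{\tau_2}\psi)\otimes\tau_3,\mu\bigr)\cong \Hom_{ZN_2}\bigl(\tau_3,\ \mu\,\omega_{\tau_2}^{-1}\psi^{-1}\bigr).
\]
The central character condition $\omega_{\tau_1}\omega_{\tau_2}\omega_{\tau_3}=\eta^2$ makes the $Z$-parts compatible. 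The right-hand side is then the space of $\psi^{-1}$-Whittaker functionals of the infinite-dimensional $\tau_3$, which is one-dimensional.

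\textbf{Step 3: extend the functional (main obstacle).} I must show that this nonzero functional on the subspace extends to all of $\tau_2\otimes\tau_3$. The obstruction lies in $\mathrm{Ext}^1_{B_2}(J(\tau_2)\otimes\tau_3,\mu)$. The quotient $J(\tau_2)\otimes\tau_3$ is a finite sum of pieces (possibly zero) on which $N_2$ acts through $\tau_3$ only.

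I would handle this by an Euler–Poincaré characteristic computation. The quantity $\sum_i(-1)^i\dim\mathrm{Ext}^i_{B_2}(\cdot,\mu)$ is additive along the filtration. The Whittaker piece contributes $1$. Each piece $\chi\otimes\tau_3$, for $\chi$ a character of $T_2$ occurring in $J(\tau_2)_{\mathrm{ss}}$, has EP characteristic $0$: by Hochschild–Serre along $T_2\ltimes N_2$ this reduces to $\mathrm{EP}_{T_2}$ of a finite-dimensional module, which vanishes because $k^\times$ has EP characteristic $0$, as in the Künneth computation of the preceding section. Hence $\mathrm{EP}(\tau_2\otimes\tau_3,\mu)=1$.

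It then remains to show $\mathrm{Ext}^2_{B_2}(\tau_2\otimes\tau_3,\mu)=0$, which I would again reduce to $T_2$-homology of Jacquet modules, and to bound $\mathrm{Ext}^1$ so that $\dim\Hom\ge 1$ follows. If $\mathrm{Ext}^1\neq 0$ in degenerate cases, where $\mu$ matches an exponent of $J(\tau_2)$ or $J(\tau_3)$, I would instead choose a different factor to play the role of $\tau_1$. Alternatively, I would treat the exceptional cases by exhibiting the functional explicitly through an absolutely convergent integral on the open $B_2$-orbit, defined by analytic continuation in the parameters $\chi_1,\chi_2$.
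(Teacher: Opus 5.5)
The paper gives no argument of its own here; it only quotes \cite[Theorem 1.2]{Prasad}. Your Steps 1 and 2 are a correct reduction, but Step 3 fails at the Euler--Poincar\'e count.

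\textbf{The gap.} In the category of smooth $B_2$-modules the Whittaker piece does not contribute $1$. By Shapiro's lemma and exactness of $N_2$-coinvariants, $\mathrm{Ext}^i_{B_2}\bigl(\mathrm{ind}_{ZN_2}^{B_2}(\omega_{\tau_2}\psi)\otimes\tau_3,\mu\bigr)$ is dual to $\mathrm{H}_i(Z,L)$. Here $L$ is the twisted Jacquet module of $\tau_3$, a line. The centre $Z\cong k^\times$ acts on $L$ by $\omega_{\tau_2}\omega_{\tau_3}\mu^{-1}|_Z=\omega_{\tau_1}\omega_{\tau_2}\omega_{\tau_3}\eta^{-2}=1$, precisely because of the central character hypothesis. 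Since $\mathrm{H}_0(k^\times,\mathbb{C})=\mathrm{H}_1(k^\times,\mathbb{C})=\mathbb{C}$, this piece has EP characteristic $0$, not $1$.

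The failure is global, not a matter of degenerate cases. Write $B_2=Z\times B_2^1$, where $B_2^1$ is the subgroup with lower-right entry $1$. Since $Z$ acts trivially on $\tau_2\otimes\tau_3\otimes\mu^{-1}$, K\"unneth gives $\mathrm{Ext}^n_{B_2}=\mathrm{Ext}^n_{B_2^1}\oplus\mathrm{Ext}^{n-1}_{B_2^1}$. So the EP characteristic over $B_2$ is identically $0$. Even granting $\mathrm{Ext}^2_{B_2}=0$, you would only get $\dim\Hom=\dim\mathrm{Ext}^1_{B_2}$, which is no lower bound.

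Your fallbacks do not address this. Switching the distinguished factor leaves the EP over $B_2$ at $0$, and a second principal series need not exist (e.g.\ $\tau_2,\tau_3$ supercuspidal). The analytic-continuation route is only named, not carried out.

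\textbf{The repair.} Work modulo the centre. We have $\Hom_{B_2}(\tau_2\otimes\tau_3,\mu)=\Hom_{\bar B}(\tau_2\otimes\tau_3\otimes\mu^{-1},\mathbb{C})$ with $\bar B=B_2/Z\cong k^\times\ltimes k$. Then:
\begin{itemize}
\item The Whittaker piece is compactly induced from $ZN_2/Z\cong N_2$. Its Ext groups are those over $N_2$, concentrated in degree $0$ and one-dimensional, so its EP characteristic is $1$.
\item Each Jacquet piece still has EP $0$: after taking $N_2$-coinvariants, $T_2/Z\cong k^\times$ acts on the finite-dimensional space $\chi\mu^{-1}\otimes J(\tau_3)$.
\item $\mathrm{Ext}^i_{\bar B}=0$ for all $i\ge 2$ and every smooth module, because $N_2$ has no higher homology and $k^\times$ has homological dimension one.
\end{itemize}
Hence $\dim\Hom=1+\dim\mathrm{Ext}^1_{\bar B}\ge 1$. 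No bound on $\mathrm{Ext}^1$ is needed, since it enters with the favourable sign, and no case analysis is required. Combined with \cite[Theorem 1.1]{Prasad} this gives dimension exactly $1$ (and $\mathrm{Ext}^1_{\bar B}=0$ a posteriori). With this correction your argument becomes a self-contained homological proof of a result the paper only cites, using the same Hochschild--Serre and K\"unneth tools the paper uses for its first main theorem.
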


This immediately yields Corollary \ref{mainthm2}.

Before proceeding to the proof of Corollary \ref{mainthm4}, we recall some basic facts concerning the Langlands correspondence and $\varepsilon$-factors. Let $W_k$ denote the Weil group of $k$ and let $ W_k' = W_k \times \SL_2 ( \mathbb{C} ) $ be the Weil-Deligne group of $k$. For a representation $\sigma$ of $W_k'$ and an additive character $\psi$ of $k$, we define the $\varepsilon$-factor $ \varepsilon( \sigma, s, \psi ) $ as in \cite{Tate}. In the rest of this paper, we adopt the abbreviation $\varepsilon( \sigma, \psi ) := \varepsilon( \sigma, 1/2, \psi ) $ for convenience. If the representation $\sigma$ satisfies that the value of $ \varepsilon( \sigma, \psi ) $ is independent of the choice of the additive character $\psi$, then we simply write $ \varepsilon( \sigma ) $. For example, if $ \det \sigma = 1 $, then by \cite[8.1.5]{Prasad},
\[
    \varepsilon( \sigma, \psi_a ) = ( \det \sigma )^{ \dim ( \sigma ) }( a ) \varepsilon( \sigma, \psi ), \quad a \in k,
\]
where $ \psi_a ( x ) = \psi( ax ) $ for all $ x \in k $. It follows that $ \varepsilon( \sigma, \psi ) $ is independent of the choice of $\psi$.

The proofs of Corollary \ref{mainthm4} and Theorem \ref{mainthm3} rely on the following theorem in \cite{Prasad} 

\begin{thmd}[{\cite[Theorem 1.4]{Prasad}}]
    \label{thm1.4}
    Let $\tau_1$, $\tau_2$, and $\tau_3$ be infinite-dimensional, irreducible, admissible smooth representations of $\GL_2(k)$ such that the product of their central characters is trivial. If all the representations $\tau_i$, for $ i = 1, 2, 3 $, are supercuspidal, assume that the residue characteristic of $k$ is odd. Let $\sigma_i$ be the representation of $W_k'$ corresponding to $\tau_i$. Then    
    \[
        \varepsilon( \sigma_1 \otimes \sigma_2 \otimes \sigma_3 ) = \pm1.
    \]
    The multiplicity of the trilinear forms is determined by the $\varepsilon$-factors:
    \[
        \dim \Hom_{ H_0 } \left( \tau_1 \otimes \tau_2 \otimes \tau_3, \mathbb{C} \right) =
        \begin{cases}
            1, & \text{if } \varepsilon( \sigma_1 \otimes \sigma_2 \otimes \sigma_3 ) = 1, \\
            0, & \text{if } \varepsilon( \sigma_1 \otimes \sigma_2 \otimes \sigma_3 ) = -1.
        \end{cases}
    \]   
\end{thmd}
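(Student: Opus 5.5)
Since this is \cite[Theorem 1.4]{Prasad}, the plan is to reconstruct Prasad's argument. It has three parts: first settle the non-discrete-series cases by Mackey theory, then prove a dichotomy between $\GL_2(k)$ and the quaternion division algebra $D_k$, and finally identify which side carries the invariant form via $\varepsilon$-factors, reducing to Tunnell's theorem.

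\textbf{Step 1 (some $\tau_i$ a principal series).} Say $\tau_1=\Ind_{B_2}^{\GL_2(k)}(\chi\otimes\chi')$. By Frobenius reciprocity and Mackey theory on the orbits of $\GL_2(k)$ on $B_2\backslash\GL_2(k)\times B_2\backslash\GL_2(k)$, the space $\Hom_{H_0}(\tau_1\otimes\tau_2\otimes\tau_3,\BC)$ reduces to invariant forms for the stabilizer of the open orbit, the torus $T_2$. This is a Waldspurger-type $T_2$-functional on $\tau_2\otimes\tau_3$, and it is one-dimensional. The closed-orbit contributions can be handled by a Jacquet-module argument like the one in Section 2, giving $\dim=1$ in general; this is \cite[Theorem 1.2]{Prasad}. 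On the Galois side, $\sigma_1=\chi\oplus\chi'$, so
\[
\varepsilon(\sigma_1\otimes\sigma_2\otimes\sigma_3)=\varepsilon(\chi\sigma_2\otimes\sigma_3)\,\varepsilon(\chi'\sigma_2\otimes\sigma_3).
\]
Since $\chi\chi'\omega_{\tau_2}\omega_{\tau_3}=1$, we have $(\chi'\sigma_2\otimes\sigma_3)^\vee\cong\chi\sigma_2\otimes\sigma_3$. The two factors are therefore dual, and their product is $\det(\chi\sigma_2\otimes\sigma_3)(-1)$, which equals $1$ because $\omega_{\tau_2}\omega_{\tau_3}=(\chi\chi')^{-1}$. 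This proves the theorem when some $\tau_i$ is a principal series. The same self-duality, $\sigma_1\otimes\sigma_2\otimes\sigma_3$ being symplectic of determinant $1$, gives $\varepsilon=\pm1$ in general.

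\textbf{Step 2 (dichotomy).} Let $\tau_i'$ denote the Jacquet--Langlands transfer to $D_k^\times$, set to $0$ if $\tau_i$ is not discrete series. I would prove
\[
\dim\Hom_{\GL_2}(\tau_1\otimes\tau_2\otimes\tau_3,\BC)+\dim\Hom_{D_k^\times}(\tau_1'\otimes\tau_2'\otimes\tau_3',\BC)=1.
\]
The route is to realize $\tau_1\otimes\tau_2$ through the theta correspondence for the similitude dual pair $(\mathrm{GO}(M_2),\GL_2)$ and a seesaw with $(\mathrm{GO}(D_k),\GL_2)$. An alternative is the character-theoretic method: compare the orthogonality relations for $\tau_1\otimes\tau_2\otimes\tau_3$ on the elliptic set with the Jacquet--Langlands character identity $\Theta_{\tau'}=-\Theta_\tau$ on elliptic regular elements. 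Multiplicity one on each side separately is Gelfand-pair type input.

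\textbf{Step 3 (which side).} For discrete series with a Steinberg factor, one uses the explicit model of $\mathrm{St}$ as a quotient of $\Ind_{B_2}^{\GL_2(k)}\delta_{B_2}^{-1/2}$ together with Step 1, and the $\varepsilon$-factor of $\mathrm{sp}(2)\otimes\sigma_2\otimes\sigma_3$ is computable directly. The main obstacle is the triple supercuspidal case. There, the odd residue characteristic makes every $\sigma_3=\Ind_{F}^{k}\xi$ dihedral, so $\tau_3$ is a theta lift from $F^\times$. A seesaw then converts $\Hom_{\GL_2}(\tau_1\otimes\tau_2,\tau_3^\vee)$ into toric periods $\Hom_{F^\times}(\pi,\xi^{-1})$ for the constituents $\pi$ of $\tau_1\otimes\tau_2$ restricted suitably. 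To these I would apply Tunnell's theorem \cite{Tunnell}: the $F^\times$-period is nonzero on $\GL_2$ exactly when $\varepsilon(\sigma_\pi\otimes\Ind_F^k\xi)\,\omega_F(-1)\omega_\pi(-1)=1$, and the analogous statement holds on $D_k^\times$. Inductivity of $\varepsilon$-factors in degree zero identifies this sign with $\varepsilon(\sigma_1\otimes\sigma_2\otimes\sigma_3)$, and the dichotomy of Step 2 gives the stated formula.

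I expect the hardest part to be tracking the signs $\omega_F(-1)$ and the central-character normalizations through the seesaw.
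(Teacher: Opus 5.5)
The paper does not prove this statement; it quotes it from \cite{Prasad}, so the comparison is with Prasad's argument. Your Galois-side computations are correct. For $\sigma_1=\chi\oplus\chi'$ the two factors are mutually contragredient, with product $\det(\chi\sigma_2\otimes\sigma_3)(-1)=1$. Since $\sigma_1\otimes\sigma_2\otimes\sigma_3$ is symplectic, $\varepsilon=\pm1$ in general. Your Mackey description in Step~1 is off: with only $\tau_1$ induced, Frobenius reciprocity gives $\Hom_{B_2}(\tau_2\otimes\tau_3,(\chi^{-1}\otimes(\chi')^{-1})\delta_{B_2}^{1/2})$, and an open orbit with stabilizer $T_2$ appears only when two of the $\tau_i$ are principal series. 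Since you fall back on \cite[Theorem 1.2]{Prasad}, this is harmless. The Steinberg cases via $0\to\chi\circ\det\to\Ind_{B_2}^{\GL_2(k)}(\chi\absk{\cdot}^{-1/2}\otimes\chi\absk{\cdot}^{1/2})\to\mathrm{St}(\chi)\to0$ are a reasonable outline.

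The genuine gap is the triple supercuspidal case, which is the real content of the theorem, together with the dichotomy it rests on.

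In Step~2 the theta route fails as stated. Theta lifts from $\GL_2$ to $\mathrm{GSO}(\RM_2)\cong(\GL_2\times\GL_2)/k^\times$, and likewise to $\mathrm{GSO}(D_k)$, have the form $\pi\boxtimes\pi$ up to twist and contragredient. So $\tau_1\boxtimes\tau_2$ with unrelated $\tau_1,\tau_2$ is not a theta lift. A seesaw that sees all triples would have to be of a different shape, for instance $\mathrm{GSp}_6$ against $\mathrm{GO}(V)$ embedded diagonally in $\mathrm{GO}(V)^3$, with $V=\RM_2$ or $D_k$. The character-theoretic alternative is only named; the multiplicity formula on the $\GL_2$ side is exactly what would have to be proved.

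In Step~3, no seesaw turns $\Hom_{\GL_2}(\tau_1\otimes\tau_2,\tau_3^\vee)$ into toric periods of `constituents' of $\tau_1\otimes\tau_2$:
\begin{itemize}
\item The restriction of $\tau_1\otimes\tau_2$ to the diagonal $\GL_2(k)$ is neither admissible nor semisimple, so it has no such constituents.
\item For irreducible $\pi$, $\Hom_{\GL_2}(\pi,\theta(\xi)^\vee)\neq0$ only if $\pi\cong\theta(\xi)^\vee$, whereas $\Hom_{F^\times}(\pi,\xi^{-1})$ is nonzero for many $\pi$. No seesaw can identify the two.
\item Even granting such a reduction, inductivity cannot identify the Tunnell sign of a constituent $\pi$ with $\varepsilon(\sigma_1\otimes\sigma_2\otimes\sigma_3)$, because $\sigma_\pi$ is not a summand of $\sigma_1\otimes\sigma_2$. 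Moreover, `some constituent has the right sign' is not a single $\varepsilon$-condition.
\end{itemize}

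Decomposing into constituents does make sense on the $D_k^\times$ side. There $D_k^\times/k^\times$ is compact, so $\tau_1'\otimes\tau_2'$ is finite-dimensional and semisimple, and that is where Prasad works. As the remark after Theorem~\ref{mainthm3} indicates, he converts the question into one about trilinear forms on $D_k^\times$ using Tunnell's theorem (\cite[Theorem 8.3]{Prasad}). He then settles the sign by case-by-case $\varepsilon$-computations in \cite[\S 8]{Prasad}.

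If you want a toric reduction on the $\GL_2$ side instead, the seesaw must start from two representations dihedral for the same field $F$. One writes $\RM_2$ or $D_k$ as $F\oplus Fj$ and uses $\mathrm{O}(F)\times\mathrm{O}(Fj)\subset\mathrm{O}(F\oplus Fj)$; this produces toric periods of the \emph{third} representation. Correspondingly, $\Ind_F^k\xi_1\otimes\Ind_F^k\xi_3\cong\Ind_F^k\xi_1\xi_3\oplus\Ind_F^k\xi_1^{s}\xi_3$ expresses $\varepsilon(\sigma_1\otimes\sigma_2\otimes\sigma_3)$ as a product of two Tunnell signs for $\sigma_2$. This is the Mackey mechanism in the proofs of Propositions~\ref{cr2} and~\ref{cr4}. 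The case of three distinct fields would still need a separate argument.
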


In the remainder of the paper, we further assume that $k$ is a local non-archimedean field with odd residue characteristic. Let $\mathrm{St}$ denote the representation $\mathrm{St}(1)$, where $1$ is the trivial character of $k^\times$. For any character $\chi$ of $k^\times$, we have $ \mathrm{St} ( \chi ) = ( \chi \circ \det ) \otimes \mathrm{St} $. Under the Langlands correspondence, $\mathrm{St}$ corresponds to the special representation $\mathrm{sp}(2)$ of the Weil-Deligne group $W_k'$ (see \cite[Theorem 33.3]{B-H}).

\begin{prpd}
    \label{specialcr}
    Let $\tau_1$, $\tau_2$, and $\tau_3$ be infinite-dimensional, essential discrete series representations of $\GL_2(k)$ with central characters satisfying $ \omega_{ \tau_1 } \omega_{ \tau_2 } \omega_{ \tau_3 } = \eta^2 $. Suppose that one of the following conditions is satisfied:
    \begin{itemize}
        \item There is exactly one Steinberg representation. Suppose it is $ \tau_1 = \mathrm{St} \left( \chi_1 \right) $ for a character $\chi_1$ of $k^\times$ while $\tau_2$ and $\tau_3$ are supercuspidal satisfying 
        \[
            \left( \chi_1 \eta^{-1} \circ \det \right) \otimes \tau_2 \ncong \tau_3^\vee.
        \]
        \item There are exactly two Steinberg representations among $\tau_1$, $\tau_2$, and $\tau_3$.
    \end{itemize}
    Then
    \[
        \dim \Hom_{ H_0 } ( \tau_1 \otimes \tau_2 \otimes \tau_3, \eta \circ \det ) = 1.
    \]
\end{prpd}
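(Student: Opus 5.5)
First I twist away $\eta$. Choose a character $\mu$ with $\mu^2=\eta$ (if none exists, absorb $\eta$ into one factor instead). Set $\tau_i'=(\mu^{-1}\circ\det)\otimes\tau_i$. Then $\Hom_{H_0}(\tau_1\otimes\tau_2\otimes\tau_3,\eta\circ\det)\cong\Hom_{H_0}(\tau_1'\otimes\tau_2'\otimes\tau_3',\mathbb{C})$ with $\omega_{\tau_1'}\omega_{\tau_2'}\omega_{\tau_3'}=1$. In general, replace $(\tau_1,\tau_2,\tau_3)$ by $(\tau_1\otimes(\eta^{-1}\circ\det),\tau_2,\tau_3)$. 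The central characters then multiply to $\eta^{-2}\eta^2=1$, and the Hom space becomes $\Hom_{H_0}(\cdot,\mathbb{C})$. Twists preserve being supercuspidal or Steinberg, so Theorem \ref{thm1.4} applies: the Steinberg cases do not need odd residue characteristic, and we have it anyway. It therefore suffices to show $\varepsilon(\sigma_1\otimes\sigma_2\otimes\sigma_3)=1$, where $\sigma_i$ is the $W_k'$-parameter of the twisted $\tau_i$.

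\emph{Case of exactly two Steinberg representations.} Write $\sigma_1=\chi_1\otimes\mathrm{sp}(2)$, $\sigma_2=\chi_2\otimes\mathrm{sp}(2)$, and let $\sigma_3$ be the parameter of a supercuspidal, i.e.\ an irreducible $2$-dimensional representation of $W_k$ with trivial $\SL_2$ part. Since $\mathrm{sp}(2)\otimes\mathrm{sp}(2)=\mathrm{sp}(3)\oplus\mathrm{sp}(1)$, we get
\[
\sigma_1\otimes\sigma_2\otimes\sigma_3=(\rho\otimes\mathrm{sp}(3))\oplus\rho,\qquad \rho:=\chi_1\chi_2\sigma_3 .
\]
I will use the standard formula $\varepsilon(\rho\otimes\mathrm{sp}(n),s,\psi)=\varepsilon(\rho,s,\psi)^n\prod_{j=0}^{n-2}(\text{local }L\text{-ratio})$. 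For irreducible $\rho$ of dimension $2$ the $L$-ratio term is trivial because $\rho^{I}=0$. Hence $\varepsilon(\rho\otimes\mathrm{sp}(3))=\varepsilon(\rho)^3$, and the total is $\varepsilon(\rho)^4$, evaluated at $s=1/2$. The determinant condition gives $\det\rho=\chi_1^2\chi_2^2\det\sigma_3=1$ up to the cyclotomic twist built into $\mathrm{sp}$, so $\rho\cong\rho^\vee$ is symplectic. Therefore $\varepsilon(\rho)=\pm1$ and $\varepsilon(\rho)^4=1$. Even more simply, $\varepsilon(\rho)^2=\varepsilon(\rho)\varepsilon(\rho^\vee)=\det\rho(-1)=1$. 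So the product is $1$.

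\emph{Case of exactly one Steinberg representation.} Now $\sigma=\chi_1\mathrm{sp}(2)\otimes\sigma_2\otimes\sigma_3=\mathrm{sp}(2)\otimes\Theta$ with $\Theta:=\chi_1\sigma_2\otimes\sigma_3$, which is $4$-dimensional and self-dual with trivial determinant, hence orthogonal. Applying the same formula for $\Theta\otimes\mathrm{sp}(2)$ summand by summand gives
\[
\varepsilon(\sigma)=\varepsilon(\Theta)^2\cdot\frac{L(\Theta^\vee,1/2)}{L(\Theta,1/2)}\cdot(\text{factor from }\Theta^{I}).
\]
Here $\varepsilon(\Theta)^2=\det\Theta(-1)=1$. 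The remaining contribution comes from the unramified part of $\Theta$, more precisely from $\Hom_{W_k}(|\cdot|^{\pm1/2},\Theta)$, and it equals $(-1)^{m}$, where $m$ is the multiplicity of the relevant unramified character in $\Theta$. By Schur's lemma, $\Hom_{W_k}(\chi_1^{-1}\sigma_2^\vee,\sigma_3)$ is nonzero exactly when $\sigma_3\cong\chi_1^{-1}\sigma_2^\vee$. After undoing the twist, this is exactly the excluded isomorphism $(\chi_1\eta^{-1}\circ\det)\otimes\tau_2\cong\tau_3^\vee$. Under the hypothesis no such constituent occurs, so the correction factor is $1$ and $\varepsilon(\sigma)=1$.

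The main obstacle is this last step. I need the precise formula for $\varepsilon(\rho\otimes\mathrm{sp}(n))$ (Tate/Deligne), with the correct normalisation of $\mathrm{sp}(2)$ and the correct $|\cdot|$ twists. I also need to check that the only possible invariant contribution is the trivial one, which forces $\sigma_3\cong\chi_1^{-1}\sigma_2^\vee$ up to an unramified twist that the central character condition pins down. The bookkeeping of the $\eta$-twists needs to be tracked carefully so that it matches the stated hypothesis exactly.
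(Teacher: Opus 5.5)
Your reduction is the paper's: twist one factor by $\eta^{-1}\circ\det$ and apply Theorem~\ref{thm1.4}, so everything comes down to showing $\varepsilon=1$. The paper then simply quotes \cite[Propositions 8.5 and 8.6]{Prasad}. You instead recompute these $\varepsilon$-factors from the formula for $\varepsilon$-factors of Weil--Deligne representations. Your two-Steinberg case does this correctly: $\mathrm{sp}(2)\otimes\mathrm{sp}(2)=\mathrm{sp}(3)\oplus\mathrm{sp}(1)$, $\rho^{I}=0$, and $\det\rho=1$ exactly, since no cyclotomic twist appears with the paper's centred $\mathrm{sp}(2)$. Hence $\varepsilon=\varepsilon(\rho)^4=(\det\rho(-1))^2=1$.

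The one-Steinberg case has a genuine gap exactly at the step you flag, and the check you propose would fail. A character $\mu$ occurs in $\Theta=\chi_1\sigma_2\otimes\sigma_3$ iff $\sigma_3\cong\mu\chi_1^{-1}\sigma_2^\vee$, and the determinant condition only forces $\mu^2=1$. So besides the trivial character, the unramified quadratic character $\mu_{\mathrm{ur}}$ can occur under your hypothesis. For example, take $\sigma_3\cong\mu_{\mathrm{ur}}\chi_1^{-1}\sigma_2^\vee$ with $\sigma_2\ncong\sigma_2\otimes\mu_{\mathrm{ur}}$. Then $\Theta^{I}\neq 0$ even though the excluded isomorphism fails, so ``no such constituent occurs'' does not follow from the hypothesis.

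Your displayed formula with $L(\Theta^\vee,1/2)/L(\Theta,1/2)$ is also not the right formula. Since $\ker N$ is the $\Theta\otimes|\cdot|^{1/2}$ part, what you need is
\[
\varepsilon(\Theta\otimes\mathrm{sp}(2),\tfrac12,\psi)=\varepsilon(\Theta,1,\psi)\,\varepsilon(\Theta,0,\psi)\,\det\bigl(-\mathrm{Fr}\,q^{-1/2}\,\big|\,(\Theta\otimes|\cdot|^{-1/2})^{I}\bigr),
\]
where $\mathrm{Fr}$ is a geometric Frobenius and $q$ is the residue cardinality. The first two factors multiply to $\det\Theta(-1)=1$, by the functional equation and $\Theta\cong\Theta^\vee$. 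Each unramified constituent $\mu$ of $\Theta$ contributes $-\mu(\varpi)$. So $\mu_{\mathrm{ur}}$ contributes $+1$, and only the trivial character contributes $-1$. Hence $\varepsilon=(-1)^m$ with $m=\dim\Hom_{W_k}(\chi_1^{-1}\sigma_2^\vee,\sigma_3)\le 1$. This is precisely Prasad's Proposition 8.5. With this computation of the $\mu_{\mathrm{ur}}$ contribution your self-contained route closes; without it, the one-Steinberg case is not proved.
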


\begin{proof}
    Let $\sigma_i$ denote the representation of $W_k'$ corresponding to $\tau_i$ for each $ i = 1, 2, 3 $. Assume  first that $ \tau_1 = \mathrm{St} \left( \chi_1 \right) $ with $\tau_2$ and $\tau_3$ supercuspidal. Define the representation $ \tau_2' := \left( \chi_1 \eta^{-1} \circ \det \right) \otimes \tau_2 $ and let $\sigma_2'$ denote its corresponding representation of $W_k'$. Set
    \[
        \varepsilon_0 := \varepsilon \left( \sigma_1 \otimes \sigma_2' \otimes \sigma_3 \right) = \varepsilon \left( \mathrm{sp}(2) \otimes \sigma_2' \otimes \sigma_3 \right).
    \]
    By Theorem \ref{thm1.4}, we have $ \dim \Hom_{ H_0 } \left( \tau_1 \otimes \tau_2 \otimes \tau_3, \eta \circ \det \right) = 1 $ if and only if $ \varepsilon_0 = 1 $. We now apply \cite[Proposition 8.5]{Prasad} to $ \mathrm{St} \otimes \tau_2' \otimes \tau_3 $, which shows that $ \varepsilon_0 = 1 $ if and only if $ \sigma_2' \ncong \sigma_3^\vee $, or equivalently, $ \tau_2' = \left( \chi_1 \eta^{-1} \circ \det \right) \otimes \tau_2 \ncong \tau_3^\vee $.

    If at least two representations of $\tau_1$, $\tau_2$, and $\tau_3$ are Steinberg representations, then without loss of generality, we assume $ \tau_1 = \mathrm{St} \left( \chi_1 \right) $ and $ \tau_2 = \mathrm{St} \left( \chi_2 \right) $. Define $ \tau_3' := \left( \chi_1 \chi_2 \eta^{-1} \circ \det \right) \otimes \tau_3 $ and let $\sigma_3'$ be its corresponding representation of $W_k'$. We have
    \[
        \varepsilon_0 = \varepsilon \left( \mathrm{sp}(2) \otimes \mathrm{sp}(2) \otimes \sigma_3' \right).
    \]
    Then the application of \cite[Proposition 8.6]{Prasad} shows that $ \varepsilon_0 = 1 $ if and only if 
    \[
        \tau_3' = \left( \chi_1 \chi_2 \eta^{-1} \circ \det \right) \otimes \tau_3 \ncong \mathrm{St}.
    \]
    If $\tau_3$ is supercuspidal, then so is $\tau_3'$. The above condition is automatically satisfied. If $ \tau_3 = \mathrm{St} \left( \chi_3 \right) $ for some character $\chi_3$ of $k^\times$, then
    \[
        \tau_3' = \left( \chi_1 \chi_2 \chi_3 \eta^{-1} \circ \det \right) \otimes \mathrm{St} = \mathrm{St}.
    \]
    To summarize, $ \varepsilon_0 = 1 $ when there are exactly two Steinberg representations among $\tau_1$, $\tau_2$, and $\tau_3$, and $ \varepsilon_0 \ne 1 $ when all three are Steinberg representations.
\end{proof}
    
By combining Theorem \ref{mainthm1} and Proposition \ref{specialcr}, we conclude the proof of Corollary \ref{mainthm4}.

\section{Proof of Theorem \ref{mainthm3}}

In this section, we assume further that the residual characteristic of $k$ is odd. We consider the case where the representations $\tau_1$, $\tau_2$, and $\tau_3$ are all supercuspidal with the product $ \omega_{ \tau_1 } \omega_{ \tau_2 } \omega_{ \tau_3 } $ of central characters equal to $\eta^2$. By the good criterion (Proposition \ref{goodcr}), $ \tau = \tau_1 \otimes \tau_2 \otimes \tau_3 $ is automatically a good representation, and so 
\[
    \Hom_{H} ( \pi_\tau, \varphi_H ) = \Hom_{ H_0 } ( \tau_1 \otimes \tau_2 \otimes \tau_3, \eta \circ \det ).
\]
For each $ i = 1, 2, 3 $, define $ \tau_i' := \left( \eta^{-1} \circ \det \right) \otimes \tau_i $ and denote the representations of $W_k$ which correspond to $\tau_i$, $\tau_i'$ under the Langlands correspondence by $\sigma_i$, $\sigma_i'$, respectively. By Theorem \ref{thm1.4}, the space of trilinear forms is nontrivial if and only if the $\varepsilon$-factor
\[
    \varepsilon_0 := \varepsilon \left( \sigma_1' \otimes \sigma_2 \otimes \sigma_3 \right) = \varepsilon \left( \sigma_1 \otimes \sigma_2' \otimes \sigma_3 \right) = \varepsilon \left( \sigma_1 \otimes \sigma_2 \otimes \sigma_3' \right)
\]
equals $1$. Before proceeding to the computation of $ \varepsilon_0 $, we briefly review some preliminaries on representations of Weil groups of local non-archimedean fields and on local $\varepsilon$-factors.

\subsection{Preliminaries}

\begin{itemize}
    \item For the local field $k$ with odd residue characteristic, there are three quadratic (separable) extensions up to isomorphism. Let $ \varpi \in k$ be a uniformizing parameter and let $ u \in \mathcal{O}_k^\times $ be a unit whose image is not a square in the residue field. Then the three quadratic extensions are precisely the following fields:
    \[
        K_1 = k( \sqrt u ), K_2=k( \sqrt\varpi ), K_3 = k( \sqrt{ u\varpi } ).
    \]
    Among them, $K_1/k$ is unramified and the others are tamely ramified.
    \item Let $ L = K_1 K_2 = K_1 K_3 = K_2 K_3 = k( \sqrt{u}, \sqrt{ \varpi } ) $. Then the Galois group
    \[
        \Gal ( L/k ) =\{ 1, \phi_1, \phi_2, \phi_3 = \phi_1 \phi_2 \} \cong \mathbb{Z} / 2\mathbb{Z} \oplus \mathbb{Z} / 2\mathbb{Z},
    \]
    where $ \phi_1 \left( \sqrt{u} \right) = - \sqrt{u} $ and $ \phi_2 \left( \sqrt{ \varpi } \right) = -\sqrt{ \varpi } $. For $ i = 1, 2, 3 $, the Weil groups $ W_{ K_i } $ are index-$2$ subgroups of $W_k$, and $W_L$ is an index-$2$ subgroup of each $ W_{ K_i } $. For any distinct $ i, j \in \{ 1, 2, 3 \} $, we have
    \[
        W_{ K_i } W_{ K_j } = W_F \text{ and } W_{ K_i } \cap W_{ K_j } = W_L.
    \]
    \item According to the Langlands correspondence for $\GL_2$, there is a one-to-one correspondence between supercuspidal representations of $\GL_2(k)$ and two-dimensional irreducible induced representations of $W_k$. Such a representation is induced from a character of $K^\times$ (for some separable quadratic extension $K/k$) that does not factor through the norm map $ \mathrm{N}^K_k $. In the remainder of this paper, we will denote the representation $ \mathrm{Ind}^{ W_k }_{ W_K } \xi $ by $ \mathrm{Ind}^k_K \xi $ for short.
    \item For a separable finite extension of local fields $E/F$ and a nontrivial additive character $\psi$ of $F$, we define the Langlands constant of $\psi$ corresponding to $E/F$ by
    \[
        \lambda_{ E/F } ( \psi ) := \frac{ \varepsilon( \mathrm{Ind}^F_E \xi, s, \psi ) }{ \varepsilon( \xi, s, \psi_E ) },
    \]
    where we set $ \psi_E := \psi \circ \tr^E_F$. This definition of $\psi_E$ will remain in effect throughout this paper. The Langlands constant is independent of both the character $\xi$ and $ s \in \mathbb{C} $. Furthermore, for any $n$-dimensional representation $\sigma$ of $W_E$, we have
    \[
        \frac{ \varepsilon \left( \mathrm{Ind}^F_E \sigma, s, \psi \right) }{ \varepsilon( \sigma, s, \psi_E ) } = \lambda_{E/F} ( \psi )^n
    \]
    (see \cite[Corollary 30.4, 34.3]{B-H}).
    \item Continuing with the notation, let $ \varsigma_{ E/F } $ be the character $ \det \left( \mathrm{Ind}^F_E 1_E \right) $, where $1_E$ is the trivial character of $E^\times$ and $ \mathrm{Ind}^F_E 1_E: W_F \rightarrow \GL_{ [E:F] } ( \mathbb{C} ) $. The central character of a representation of $\GL_2(F)$ corresponding to a representation $\sigma$ of $W_F$ equals $ \det \sigma $ (see \cite[Proposition 33.4]{B-H}). Therefore, the supercuspidal representation corresponding to $ \mathrm{Ind}^F_E \xi $ has central character
    \[
        \det \left( \mathrm{Ind}^F_E \xi \right) = \left( \xi|_{ F^\times } \right) \det \left( \mathrm{Ind}^F_E 1_E \right) = \left( \xi|_{ F^\times } \right) \varsigma_{ E/F }
    \]
    (see \cite[34.2]{B-H}). Since the image of $ \mathrm{Ind}^F_E 1_E $ in $ \GL_{ [E:F] } ( \mathbb{C} ) $ consists of symmetric matrices, we have 
    \[
        \varsigma_{ E/F }(w) = \det ( \mathrm{Ind}^F_E 1_E (w) ) = \pm1, \quad \text{for all } \omega \in W_E,
    \]
    and thus $ \varsigma_{ E/F }^2 = 1 $. By \cite[30.4.3]{B-H}, $ \lambda_{ E/F }( \psi )^2 = \varsigma_{ E/F } (-1) $, which implies that $ \lambda_{ E/F } ( \psi ) $ is a fourth root of unity.
    \item Now assume further that $F$ is a local non-archimedean field and $E/F$ is Galois. Let $\sigma$ be a representation of $W_E$ and $ \phi \in \Gal( E/F ) $. Consider the twisted representation $ \sigma^\phi = \sigma \circ \Ad( \phi )$. By \cite[1.17]{Tunnell}, we have
    \begin{equation}
        \label{twistepsilon}
        \varepsilon \left( \sigma^\phi, \psi_E \right) = \varepsilon \left( \sigma, \psi_E \right).
    \end{equation}
    Let $ \sigma^\vee $ denote the contragredient of $\sigma$. Then by \cite[1.1.6]{Tunnell}, we have
    \begin{equation}
        \label{contraepsilon}
        \varepsilon( \sigma, \psi_E ) \varepsilon \left( \sigma^\vee, \psi_E \right) = \det ( \sigma ) (-1).
    \end{equation}
\end{itemize}
The following lemma is a direct consequence of the properties stated above.

\begin{lemd}
    \label{varsigma-11}
    Let $L$, $K_1$, $K_2$, $K_3$, and $k$ be as above. Then for each $ i = 1, 2, 3 $, we have
    \[
        \varsigma_{ L/k } (-1) = \varsigma_{ L/K_i } (-1) = 1.
    \]
\end{lemd}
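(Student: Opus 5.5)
The plan is to identify each $\varsigma$ with an explicit character of $k^\times$ or $K_i^\times$ via local class field theory, and then evaluate it at $-1$. The key observation is that for a Galois extension $E/F$ with abelian Galois group, $\mathrm{Ind}^F_E 1_E$ is the regular representation of $\Gal(E/F)$, viewed as a representation of $W_F$. It therefore decomposes as the direct sum of all characters of $\Gal(E/F)$, and $\varsigma_{E/F}$ is the product of these characters. I will regard these characters as characters of $F^\times$ through the Artin map.

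First I treat $\varsigma_{L/k}$. The group $\Gal(L/k)\cong \mathbb{Z}/2\mathbb{Z}\oplus\mathbb{Z}/2\mathbb{Z}$ has four characters, namely $1,\omega_{K_1},\omega_{K_2},\omega_{K_3}$. Here $\omega_{K_i}$ denotes the quadratic character of $W_k$ with kernel $W_{K_i}$, which corresponds to the quadratic character of $k^\times$ attached to $K_i/k$. Hence
\[
    \varsigma_{L/k}=\omega_{K_1}\omega_{K_2}\omega_{K_3}.
\]
The three nontrivial characters of the Klein four group satisfy $\omega_{K_3}=\omega_{K_1}\omega_{K_2}$. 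Therefore $\varsigma_{L/k}=\omega_{K_1}^2\omega_{K_2}^2=1$ identically, and in particular $\varsigma_{L/k}(-1)=1$. This step needs no computation at all.

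Next, for each $i$ the extension $L/K_i$ is quadratic, so $\varsigma_{L/K_i}=\omega_{L/K_i}$ is the quadratic character of $K_i^\times$ whose kernel is $\N^L_{K_i}(L^\times)$. The claim $\varsigma_{L/K_i}(-1)=1$ is thus equivalent to $-1$ being a norm from $L$. I split into two cases.

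\emph{The cases $i=2,3$.} Here $K_i/k$ is ramified and $L=K_i(\sqrt u)$, where $u$ is a nonsquare unit of $k$. Since $K_i/k$ is totally ramified, the residue field of $K_i$ is still that of $k$, so $u$ remains a nonsquare unit there. Hence $L/K_i$ is unramified. Every unit is a norm from an unramified extension, and in particular $-1$ is a norm.

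\emph{The case $i=1$.} Here $L=K_1(\sqrt\varpi)$ is a ramified quadratic extension of $K_1$, and I use the tame Hilbert symbol in odd residue characteristic. This gives $\omega_{L/K_1}(-1)=(-1,\varpi)_{K_1}$, which equals the quadratic residue symbol of $-1$ in the residue field $\mathbb{F}_{q^2}$ of $K_1$. Every element of $\mathbb{F}_q$ is a square in $\mathbb{F}_{q^2}$, so this symbol is $1$.

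The main obstacle is only bookkeeping. I must make sure that the normalization of $\varsigma$ as a determinant matches the class-field-theoretic quadratic character; this holds because $\mathrm{Ind}^F_E 1_E\cong 1\oplus\omega_{E/F}$ for quadratic $E/F$. I must also justify the unit computation in the case $i=1$ via the standard tame symbol formula, which uses that the residue characteristic is odd.
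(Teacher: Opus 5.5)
Your proof is correct. For $\varsigma_{L/k}$ it matches the paper: the paper lists the four permutation matrices of the regular representation of $\Gal(L/k)$ and notes they have determinant $1$, and your character-product formulation is the same fact. For $\varsigma_{L/K_i}(-1)$, however, you take a genuinely different route.

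The paper never identifies $\varsigma_{L/K_i}$ with a norm-residue character. Instead it computes $\varepsilon(\mathrm{Ind}^k_L\xi,\psi)$ in two ways to get $\lambda_{L/k}(\psi)=\lambda_{L/K_i}(\psi_{K_i})\lambda_{K_i/k}(\psi)^2$. It then squares, using $\lambda_{K_i/k}(\psi)^4=1$, and concludes $\varsigma_{L/K_i}(-1)=\lambda_{L/K_i}(\psi_{K_i})^2=\lambda_{L/k}(\psi)^2=\varsigma_{L/k}(-1)=1$. This argument is uniform in $i$, needs no information about which $L/K_i$ is ramified, and stays inside the $\varepsilon$-factor formalism of the surrounding section. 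It does silently use $\lambda_{E/F}(\psi)^2=\varsigma_{E/F}(-1)$ for the quartic extension $L/k$; this follows from $\varepsilon(\sigma,\psi)\varepsilon(\sigma^\vee,\psi)=\det\sigma(-1)$ applied to the self-dual $\sigma=\mathrm{Ind}^k_L 1_L$.

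Your argument is more elementary and explicit. It uses only local class field theory, the unramified or tamely ramified structure of $L/K_i$, and odd residue characteristic, at the cost of a case split. Two small remarks:
\begin{itemize}
\item For $i=1$ you can skip the Hilbert symbol. The element $-1$ is already a square in $K_1$, since its residue is a square in $\mathbb{F}_{q^2}$ and lifts by Hensel's lemma. Hence $-1=\N^L_{K_1}(\sqrt{-1})$.
\item A uniform class-field-theoretic alternative to your case split is the determinant-of-induction formula $\det\mathrm{Ind}^k_{K_i}\rho=\varsigma_{K_i/k}^{\dim\rho}\cdot(\det\rho)|_{k^\times}$ with $\rho=\mathrm{Ind}^{K_i}_L 1_L$. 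It gives $\varsigma_{L/K_i}|_{k^\times}=\varsigma_{L/k}=1$, so in fact every element of $k^\times$ is a norm from $L$ to $K_i$.
\end{itemize}
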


\begin{proof}
    Consider the image of the homomorphism $ \mathrm{Ind}^k_L 1_L: W_k \rightarrow \GL_4( \mathbb{C} )$. It consists of the following matrices
    \[
        1_4,
        \begin{bmatrix}
            0   & 1_2\\
            1_2 & 0
        \end{bmatrix}
        ,
        \begin{bmatrix}
            w_2 & 0  \\
            0   & w_2
        \end{bmatrix}
        , w_4,
    \]
    all of which have trivial determinants. It follows that $ \varsigma_{ L/k } $ is trivial. Let $\xi$ be any character of $L^\times$ and $\psi$ be a nontrivial additive character of $k$. Then we have
    \[
        \varepsilon \left( \mathrm{Ind}^k_L \xi, \psi \right) = \varepsilon( \xi, \psi_L ) \lambda_{ L/k } ( \psi ).
    \]
    On the other hand, for each $ i = 1, 2, 3 $,
    \[
        \begin{aligned}
            \varepsilon \left( \mathrm{Ind}^k_L \xi, \psi \right) =& \varepsilon \left( \mathrm{Ind}^{k}_{ K_i } \mathrm{Ind}^{ K_i }_{L} \xi, \psi \right) = \varepsilon \left( \mathrm{Ind}^{K_i}_L \xi, \psi_{K_i} \right) \lambda_{ K_i/k }^2 ( \psi )   \\
                                                                  =& \varepsilon( \xi, \psi_L ) \lambda_{ L/K_i } \left( \psi_{ K_i } \right) \lambda_{ K_i/K }^2 ( \psi ).
        \end{aligned}
    \]
    Comparing both expressions yields 
    \[
        \lambda_{ L/k } ( \psi ) = \lambda_{ L/K_i } \left( \psi_{ K_i } \right) \lambda_{ K_i/K }^2 ( \psi ),
    \]
    and hence
    \[
        \varsigma_{ L/K_i } (-1) = \lambda^2_{ L/K_i } ( \psi_{ K_i } ) = \lambda^2_{ L/k } ( \psi ) = \varsigma_{ L/k } (-1) = 1.
    \]
\end{proof}
    
For $ i = 1, 2, 3 $, we denote by $ k(\tau_i) $ the quadratic extension of $k$ such that, under the Langlands correspondence, the representation corresponding to $\tau_i$ is induced from a character of $ k( \tau_i )^\times $. The proof of Theorem \ref{mainthm3} is divided into three cases: 
\begin{itemize}
    \item Case $ ( F_1, F_2, F_3 ) $: The fields $ k( \tau_1 ) $, $ k( \tau_2 ) $, and $ k( \tau_3 ) $ are pairwise distinct, and denoted by $F_1$, $F_2$, and $F_3$, respectively.
    \item Case $ ( F_1, F_1, F_2 ) $: Exactly two of the fields $ k( \tau_1 ) $, $ k( \tau_2 ) $, and $ k( \tau_3 ) $ coincide. Without loss of generality, we may assume that $ k( \tau_1 ) = k( \tau_2 ) $, denote the common field by $F_1$, and set $ k( \tau_3 ) := F_2 $.
    \item Case $ ( F, F, F ) $: All three fields coincide: $ k( \tau_1 ) = k( \tau_2 ) = k( \tau_3 ) $. We denote this common field by $F$.  
\end{itemize}
 
\subsection{Case $(F_1,F_2,F_3)$}

The following proposition is a corollary of \cite[Proposition 8.7]{Prasad}.
    
\begin{prpd}
    \label{cr1}
    Let $\tau_1$, $\tau_2$, and $\tau_3$ be irreducible supercuspidal representations of $\GL_2(k)$ with central characters satisfying $ \omega_{ \tau_1 } \omega_{ \tau_2 } \omega_{ \tau_3 } = \eta^2 $. Suppose that the representation $\tau_i$ corresponds to $ \mathrm{Ind}^k_{ F_i } \xi_i $ for each $ i = 1, 2, 3 $, where $F_1$, $F_2$, and $F_3$ are three distinct quadratic extension of $k$, and $\xi_i$ is a character of $F_i$. Then 
    \[
        \dim \Hom_{ H_0 } \left( \tau_1 \otimes \tau_2 \otimes \tau_3 , \eta \circ \det \right) = 1.
    \]
\end{prpd}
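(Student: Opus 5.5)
The plan is to reduce to the case of trivial product of central characters and then invoke Theorem \ref{thm1.4} together with \cite[Proposition 8.7]{Prasad}. Set $\tau_1' := (\eta^{-1}\circ\det)\otimes\tau_1$. Then
\[
\Hom_{H_0}(\tau_1\otimes\tau_2\otimes\tau_3,\eta\circ\det)=\Hom_{H_0}(\tau_1'\otimes\tau_2\otimes\tau_3,\BC),
\]
and $\omega_{\tau_1'}\omega_{\tau_2}\omega_{\tau_3}=\eta^{-2}\eta^2=1$. Moreover $\tau_1'$ corresponds to $\sigma_1'=\eta^{-1}\otimes\mathrm{Ind}^k_{F_1}\xi_1=\mathrm{Ind}^k_{F_1}\bigl(\xi_1\cdot(\eta^{-1}\circ\N^{F_1}_k)\bigr)$. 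So $\tau_1'$ is still supercuspidal and dihedral with respect to the same field $F_1$, and the three fields attached to $\tau_1',\tau_2,\tau_3$ remain pairwise distinct. By Theorem \ref{thm1.4}, which applies since we are assuming odd residue characteristic in this section, it suffices to show $\varepsilon_0=\varepsilon(\sigma_1'\otimes\sigma_2\otimes\sigma_3)=1$.

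For that, I would invoke \cite[Proposition 8.7]{Prasad}. It states that for three dihedral supercuspidals with trivial product of central characters attached to the three distinct quadratic extensions, the triple epsilon factor equals $1$; equivalently, the trilinear form exists. Since $k$ has odd residue characteristic, $\{F_1,F_2,F_3\}=\{K_1,K_2,K_3\}$ is exactly the configuration treated there.

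If one wants to verify the epsilon computation directly rather than cite it, the steps would be as follows. First use Mackey theory: $\sigma_2\otimes\sigma_3=\mathrm{Ind}^k_{F_1}\bigl((\xi_2\circ\N^L_{F_2})\text{-type character of }L \text{ restricted/induced}\bigr)$, and more precisely $\mathrm{Res}_{W_{F_1}}(\sigma_2\otimes\sigma_3)=\mathrm{Ind}^{F_1}_L\theta$ with $\theta=(\xi_2\circ\N^L_{F_2})(\xi_3\circ\N^L_{F_3})$. This gives
\[
\sigma_1'\otimes\sigma_2\otimes\sigma_3=\mathrm{Ind}^k_{F_1}\bigl(\xi_1'\otimes\mathrm{Ind}^{F_1}_L\theta\bigr)=\mathrm{Ind}^k_L(\xi_1'\theta),
\]
with $\xi_1'=\xi_1(\eta^{-1}\circ\N^{F_1}_k)$ pulled back to $L$. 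Next use inductivity of $\varepsilon$ in degree zero, with $\varsigma_{L/k}=1$ and Lemma \ref{varsigma-11}, to write $\varepsilon_0=\lambda_{L/k}(\psi)^2\,\varepsilon(\mu,\psi_L)^2$-type expressions. Here the character $\mu=\xi_1'\theta$ on $L^\times$ is fixed up to inverse under $\Gal(L/k)$ because of the central character condition. Finally, (\ref{twistepsilon}) and (\ref{contraepsilon}) give $\varepsilon_0=1$.

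The main obstacle in this direct route is the sign bookkeeping in that last step: one must show that $\mu$ restricted to $k^\times$ (or to the relevant subfield) has the right parity so that the product of epsilon factors is $+1$ rather than $-1$. I would therefore rely on Prasad's Proposition 8.7, whose proof handles exactly this via Tunnell's theorem, and only remark that the twist by $\eta^{-1}$ preserves the hypotheses.
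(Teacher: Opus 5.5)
Your main argument is the paper's proof: twisting $\tau_1$ by $\eta^{-1}\circ\det$ keeps it dihedral with respect to $F_1$ and makes the product of central characters trivial, and then Theorem \ref{thm1.4} together with \cite[Proposition 8.7]{Prasad} gives $\varepsilon_0=1$ and hence dimension one. Your optional direct sketch is not needed, but it drops a summand: $\sigma_2\otimes\sigma_3\cong\mathrm{Ind}^k_L\theta$ is already $4$-dimensional, so with your $\xi_1'$ one gets $\sigma_1'\otimes\sigma_2\otimes\sigma_3\cong\mathrm{Ind}^k_L\bigl((\xi_1'\circ\N^L_{F_1})\theta\bigr)\oplus\mathrm{Ind}^k_L\bigl((\xi_1'^{\,s_1}\circ\N^L_{F_1})\theta\bigr)$, which is $8$-dimensional and has the same two-factor shape the paper exploits in Proposition \ref{cr2}.
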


\begin{proof}
    Recall that $ \tau_1' = \tau_1 \otimes \left( \eta^{-1} \circ \det \right) $ is a supercuspidal representation corresponding to the representation $ \mathrm{Ind}^k_{ F_1 } \xi_1 \left( \eta^{-1} \circ \mathrm{N}^{ F_1 }_k \right) $ with central character $ \omega_{ \tau_1' } = \xi_1|_{ k^\times } \eta^{-2} \varsigma_{ F_1/k } = \omega_{ \tau_1 } \eta^{-2} $. Therefore, the product of central characters of $\tau_1', \tau_2, \tau_3$ is trivial. By \cite[Proposition 8.7]{Prasad}, the $\varepsilon$-factor
    \[
        \varepsilon_0 = \varepsilon( \sigma_{1}' \otimes \sigma_2 \otimes \sigma_3 ) = 1.
    \]
\end{proof}

\subsection{Case $(F_1,F_1,F_2)$}

The following facts from \cite{Tunnell} are relevant for computing $\varepsilon$-factors when some of the fields $ k( \tau_1 ) $, $ k( \tau_2 ) $, and $ k( \tau_3 ) $ coincide.
    
\begin{prpd}[{\cite[Theorem 1.2]{Tunnell}}]
    \label{t 1.2}
    Let $F$ be a local non-archimedean field and let $\zeta$ and $\xi$ be characters of $F^\times$ with $ \mathrm{cond} ( \xi ) \ge2 \mathrm{cond} ( \zeta ) $. Let $ y \in F $ be defined by the relation $ \xi(1 + x) = \psi(yx) $ for all $ x \in F $ with $ v(x) \ge \mathrm{cond} ( \xi ) / 2 $. Then 
    \[
        \varepsilon ( \zeta \xi, \psi ) = \zeta^{-1} (y) \varepsilon( \xi, \psi).
    \]
    If $ \mathrm{cond} ( \xi ) = 0 $, we take $ y = \varpi^{ -\mathrm{cond} ( \psi ) } $. Here $\varpi$ is a uniformizing parameter of $F$ and $ \mathrm{cond} ( \cdot ) $ denotes the conductor of a character or an additive character.
\end{prpd}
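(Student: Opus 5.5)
This is Tunnell's result, and my plan is to prove it through the Tate local functional equation, working with the explicit form of the abelian $\varepsilon$-factor as a Gauss sum. Put $n = \mathrm{cond}(\xi)$ and $d = \mathrm{cond}(\psi)$. Assume $n \ge 1$, and suppose first that $\mathrm{cond}(\zeta\xi) = n$; this holds once $n > \mathrm{cond}(\zeta)$. The degenerate situations are treated at the end. For a ramified character $\chi$ of conductor $n$ we have, up to a normalizing factor that depends only on $n$, $d$ and $s = 1/2$ (and so is the same for $\chi = \xi$ and $\chi = \zeta\xi$),
\[
\varepsilon(\chi, \psi) \doteq \int_{c^{-1}\mathcal{O}_F^\times} \chi^{-1}(x)\, \psi(x)\, dx ,
\]
where $c$ has valuation $n + d$. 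Since the normalizing factor cancels in the ratio, the statement reduces to an identity between two Gauss sums.

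The key computation uses the substitution $x \mapsto x(1+t)$ with $v(t) \ge \lceil n/2 \rceil$. Set $m = \lceil n/2 \rceil$ and decompose $c^{-1}\mathcal{O}_F^\times$ into cosets $x_0(1 + \mathfrak{p}^m)$. The hypothesis $n \ge 2\,\mathrm{cond}(\zeta)$ gives $m \ge \mathrm{cond}(\zeta)$, so $\zeta$ is constant on each coset. By the defining property of $y$, we have $\xi^{-1}(x_0(1+t))\,\psi(x_0(1+t)) = \xi^{-1}(x_0)\,\psi(x_0)\,\psi\bigl((x_0 - y)t\bigr)$. This is exact when $n$ is even. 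When $n$ is odd I would refine the computation slightly, using $\xi(1+t) = \psi(y\log(1+t))$ on $\mathfrak{p}^{\lfloor n/2 \rfloor}$, or simply average over $1 + \mathfrak{p}^{m}$ while keeping track of the quadratic term. Integrating over $t$ kills every coset on which $x_0 - y \notin \mathfrak{p}^{-m-d}$. The only surviving coset is therefore $y(1 + \mathfrak{p}^{n-m})$, which has the right size because $v(y) = -n-d$. On that coset $\zeta^{-1}$ takes the constant value $\zeta^{-1}(y)$, because $n - m \ge \mathrm{cond}(\zeta)$. Comparing the integrals for $\chi = \zeta\xi$ and $\chi = \xi$ term by term then gives $\varepsilon(\zeta\xi,\psi) = \zeta^{-1}(y)\,\varepsilon(\xi,\psi)$.

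The odd-conductor case is the main obstacle. There, $\psi(yt)$ matches $\xi(1+t)$ only on $1 + \mathfrak{p}^{\lceil n/2 \rceil}$, not on $1 + \mathfrak{p}^{\lfloor n/2 \rfloor}$. My first approach would be to split the Gauss sum into cosets of $1+\mathfrak{p}^{\lceil n/2\rceil}$ only. The vanishing argument above still isolates the single class of $y$ modulo $\mathfrak{p}^{-\lfloor n/2\rfloor - d}$. That class is a union of cosets on which $\zeta$ is still constant, since $\lfloor n/2 \rfloor \ge \mathrm{cond}(\zeta)$ whenever $n \ge 2\,\mathrm{cond}(\zeta)$ and $n$ is odd (because $n > 2\,\mathrm{cond}(\zeta)$ forces $\lfloor n/2 \rfloor \ge \mathrm{cond}(\zeta)$). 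So the factor $\zeta^{-1}(y)$ again comes out, and the remaining residual sum is identical for $\xi$ and $\zeta\xi$.

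The boundary cases remain. If $\mathrm{cond}(\xi) = 0$, then $\mathrm{cond}(\zeta) = 0$ as well. Both characters are unramified, and the formula $\varepsilon(\chi,\psi) = \chi(\varpi^{d})\cdot(\text{const})$ gives the ratio $\zeta(\varpi^{d}) = \zeta^{-1}(\varpi^{-d})$, which matches the convention $y = \varpi^{-\mathrm{cond}(\psi)}$. If $n = 2\,\mathrm{cond}(\zeta) > 0$, the argument above goes through unchanged. It never used that $\zeta\xi$ has conductor exactly $n$, except to normalize, and $\mathrm{cond}(\zeta\xi) = n$ can be checked directly because $\zeta$ is trivial on $1+\mathfrak{p}^{n/2}$ while $\xi$ is not trivial on $1 + \mathfrak{p}^{n-1}$ (note $n - 1 \ge n/2$).
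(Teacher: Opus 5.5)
Your proposal is correct. The paper gives no proof of this statement; it only cites Tunnell's Theorem 1.2. Your Gauss-sum computation is the standard argument for this twisting formula, so there is no competing route in the paper to compare against. A few points in the write-up should be tidied.

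First, odd $n$ is not actually an obstacle. By the definition of $y$, $\xi(1+t)=\psi(yt)$ holds exactly for every $t$ with $v(t)\ge \lceil n/2\rceil$, so your coset identity is exact for both parities. Your main paragraph therefore already covers odd $n$. The surviving set $y(1+\mathfrak{p}^{n-m})=y(1+\mathfrak{p}^{\lfloor n/2\rfloor})$ is then a union of $q$ cosets of $1+\mathfrak{p}^{m}$. The character $\zeta$ is constant on it because $\mathrm{cond}(\zeta)$ is an integer $\le n/2$, hence $\le\lfloor n/2\rfloor$. You should drop the $\log$ refinement, which is unnecessary and not available in general. There is also an index slip in that paragraph: the surviving class is $y$ modulo $\mathfrak{p}^{-\lceil n/2\rceil-d}$, not modulo $\mathfrak{p}^{-\lfloor n/2\rfloor-d}$.

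Second, the claim $v(y)=-n-d$ needs justification.
For $n\ge 2$ it is forced: $\xi$ is trivial on $1+\mathfrak{p}^{n}$ but not on $1+\mathfrak{p}^{n-1}$, and $1+\mathfrak{p}^{n-1}\subseteq 1+\mathfrak{p}^{\lceil n/2\rceil}$.
For $n=1$ the defining relation only yields $v(y)\ge -1-d$. You must normalize $v(y)=-1-d$, as the statement tacitly does. This matters because $\zeta$ is then unramified, so $\zeta^{-1}(y)$ depends on $v(y)$.

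Finally, your separate treatment of the case $n=2\,\mathrm{cond}(\zeta)>0$ is redundant. For any $n\ge 1$, the hypothesis $n\ge 2\,\mathrm{cond}(\zeta)$ already forces $n>\mathrm{cond}(\zeta)$, and hence $\mathrm{cond}(\zeta\xi)=n$.
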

    
The following property is a result of Frolich-Queryut-Deligne.

\begin{prpd}[{\cite[Theorem 1.4]{Tunnell}}]
    \label{t 1.4}
    Let $F$ be a local non-archimedean field and $E$ be a quadratic separable extension of $F$. Let $\chi$ be a character of $E^\times$ which is trivial on $F^\times$. Let $\alpha$ be an element of $E^\times$ with $ \tr^E_F ( \alpha ) = 0 $. Then $ \varepsilon \left( \chi, \psi_E \right) = c \chi ( \alpha ) $, where $c$ is a constant independent of $\chi$ and $\psi$ is some nontrivial additive character of $F$.
\end{prpd}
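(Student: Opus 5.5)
The plan is to reduce everything to a change of additive character. Then I use the two symmetries recorded in the preliminaries: Galois invariance (\ref{twistepsilon}) and the contragredient relation (\ref{contraepsilon}). Let $\bar{\cdot}$ denote the nontrivial element of $\Gal(E/F)$. Since $\chi$ is trivial on $F^\times$ and $x\bar x=\N^E_F(x)\in F^\times$, we get $\chi(\bar x)=\chi(x)^{-1}$, i.e.\ $\chi^{\phi}=\chi^{-1}$. Also $\chi(-1)=1$ because $-1\in F^\times$.

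\textbf{Step 1 (change of additive character).} Put $\psi'(x):=\psi_E(\alpha x)=\psi(\tr^E_F(\alpha x))$. The standard transformation rule $\varepsilon(\chi,s,\psi_E(a\,\cdot))=\chi(a)\absk{a}^{s-1/2}\varepsilon(\chi,s,\psi_E)$, evaluated at $s=1/2$, gives
\[
\varepsilon(\chi,\psi_E)=\chi(\alpha)^{-1}\varepsilon(\chi,\psi').
\]
Since $\chi(\alpha)^2=\chi(\alpha^2)$ and $\alpha^2=-\alpha\bar\alpha\in F^\times$, we have $\chi(\alpha)=\pm1$, so $\chi(\alpha)^{-1}=\chi(\alpha)$. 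It therefore suffices to prove that $\varepsilon(\chi,\psi')$ is a constant $c$ not depending on $\chi$. Here $\tr^E_F(\alpha)=0$ means $\bar\alpha=-\alpha$, so
\[
\psi'(\bar x)=\psi(\tr(\bar\alpha x))=\psi'(x)^{-1}.
\]

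\textbf{Step 2 ($\varepsilon(\chi,\psi')=\pm1$).} By Galois invariance (the form of (\ref{twistepsilon}) that transports $\chi$ and the additive character together),
\[
\varepsilon(\chi,\psi')=\varepsilon(\chi^{\phi},\psi'\circ\phi)=\varepsilon(\chi^{-1},\overline{\psi'}).
\]
On the other hand, the local functional equation at $s=1/2$ gives $\varepsilon(\chi,\psi')\,\varepsilon(\chi^{-1},\overline{\psi'})=1$; this is equivalent to (\ref{contraepsilon}) together with $\chi(-1)=1$. Combining the two, $\varepsilon(\chi,\psi')^2=1$.

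\textbf{Step 3 (main obstacle: the sign is independent of $\chi$).} This is the genuinely Fröhlich--Queyrut part; in fact one expects $\varepsilon(\chi,\psi')=1$, so $c=1$ up to the normalization of $\psi$. First I would try a direct Gauss-sum computation.
\begin{itemize}
\item For unramified $\chi$, $\varepsilon$ is $\chi$ evaluated at a power of the uniformizer times a quantity independent of $\chi$, which is immediate.
\item For ramified $\chi$ of conductor $n$, write $\varepsilon(\chi,\psi')$ as a normalized integral of $\chi^{-1}(x)\psi'(x)$ over the shell $v_E(x)=-n-\mathrm{cond}(\psi')$. Because $\chi$ is $F^\times$-invariant, fibre this integral over $F^\times$-orbits $x=ty$. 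The inner integral $\int\psi(t\,\tr(\alpha y))\,dt$ is then real, since the set of $t$ is stable under $t\mapsto -t$. The outer sum pairs $y$ with $\bar y$, and this pairing exhibits the integral as a nonnegative real multiple of a fixed quantity.
\end{itemize}
Positivity combined with Step 2 forces $\varepsilon(\chi,\psi')=1$. If the shell bookkeeping becomes unwieldy, the fallback is Deligne's global argument. Embed $E/F$ and $\chi$ into a global quadratic extension with a Hecke character trivial on the idèles of the base field; choose the auxiliary data so that all other local factors are controlled. Then apply the global functional equation, for which the root number of such a character equals $1$.
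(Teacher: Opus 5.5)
\textbf{There is a genuine gap in Step 3.} The paper does not prove this proposition; it quotes it from Tunnell as the Fr\"ohlich--Queyrut(--Deligne) theorem. Your Steps 1 and 2 are correct. They give $\varepsilon(\chi,\psi_E)=\chi(\alpha)\,\varepsilon(\chi,\psi')$ with $\varepsilon(\chi,\psi')=\pm1$, where $\psi'=\psi_E(\alpha\,\cdot)$. Note also that $\varepsilon(\chi,\psi')$ is unchanged if you replace $\psi$ by $\psi(a\,\cdot)$ or $\alpha$ by $t\alpha$ with $a,t\in F^\times$.

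The whole content of the theorem, however, is that this sign does not depend on $\chi$, and Step 3 does not prove it. You should fibre over $\mathcal{O}_F^\times$-orbits, since the shell $v_E(x)=\mathrm{const}$ is not $F^\times$-stable. Your pairing of $y$ with $\bar y$ then uses only $\chi(\bar y)=\chi(y)^{-1}$ and $R(\bar y)=R(y)$, where $R(y)=\int_{\mathcal{O}_F^\times}\psi(t\,\tr(\alpha y))\,dt$. This rewrites the sum as $\sum_y R(y)\,\mathrm{Re}\,\chi(y)$, so it only shows the Gauss sum is real, which Step 2 already gave. No sign comes out of it: $R$ takes negative values, and $\mathrm{Re}\,\chi$ takes both signs.

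The fallback is circular. The global functional equation for a Hecke character trivial on the id\`eles of the base only yields root number $\pm1$, by exactly your Step 2 argument. That the global root number is $+1$ is the global Fr\"ohlich--Queyrut theorem, which is normally deduced from the local statement via $\prod_v\chi_v(\alpha)=1$.

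\textbf{How to repair it.} What makes your fibring work is an explicit evaluation of $R$ followed by orthogonality of characters. Write $a(\chi)$ for the conductor exponent of $\chi$, $U_E^j=1+\mathfrak{p}_E^j$ with $U_E^0=\mathcal{O}_E^\times$, and $n(\psi')$ for the largest $m$ with $\psi'$ trivial on $\mathfrak{p}_E^{-m}$.
\begin{enumerate}
\item Take $\psi$ of conductor $\mathcal{O}_F$. Write $\mathcal{O}_E=\mathcal{O}_F[\omega]$, with $\omega$ a uniformizer if $E/F$ is ramified, and take $\alpha=\delta:=\omega-\bar\omega$. This $\delta$ generates the different $\mathfrak{D}_{E/F}$, so $\psi'$ is trivial exactly on $\mathfrak{D}_{E/F}^{-2}$ and $n(\psi')=2v_E(\delta)$ is even.
\item When $E/F$ is ramified, $a(\chi)$ is even. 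Indeed, $U_F^i$ maps onto $U_E^{2i}/U_E^{2i+1}$, so $\chi$ cannot first become trivial at an odd level.
\item Hence the Gauss-sum shell is $c^{-1}\mathcal{O}_E^\times$ with $c\in F^\times$. For $u=a+b\omega$ one has $\tr(\delta c^{-1}u)=c^{-1}b\,\delta^2$ with $\delta^2\in F$.
\item It follows that $R$ equals $1-q^{-1}$ on $c^{-1}\mathcal{O}_F^\times U_E^{a(\chi)}$. It equals $-q^{-1}$ on the remainder of $c^{-1}\mathcal{O}_F^\times U_E^{a(\chi)-1}$, and vanishes elsewhere.
\item So, up to a positive factor, the Gauss sum is $\mathrm{vol}(\mathcal{O}_F^\times U_E^{a(\chi)})-q^{-1}\int_{\mathcal{O}_F^\times U_E^{a(\chi)-1}}\chi^{-1}$. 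The integral vanishes because $\chi$ is nontrivial on $U_E^{a(\chi)-1}$.
\end{enumerate}
Therefore $\varepsilon(\chi,\psi')>0$, hence $\varepsilon(\chi,\psi')=1$ and the constant in the statement is $c=1$.

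The same evenness of $n(\psi')$ is what your ``immediate'' unramified case needs. There $\varepsilon(\chi,\psi')=\chi(\varpi_E)^{\pm n(\psi')}$, and $\chi(\varpi_E)^2=1$.
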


Now suppose that representations $\tau_1$ and $\tau_2$ correspond to $ \mathrm{Ind}^k_{ F_1 } \xi_1 $ and $ \mathrm{Ind}^k_{ F_1 } \xi_1' $ respectively, while $\tau_3$ corresponds to $ \mathrm{Ind}^k_{ F_2 } \xi_2 $, where $F_1$ and $F_2$ are distinct quadratic extensions of $k$, and $\xi_1$, $\xi_1'$, $\xi_2$ are characters of $F_1^\times$, $F_1^\times$ and, $F_2^\times$ respectively. Define $\xi$ and $ \widetilde{ \xi } $ by
\[
    \xi = \left( \left( \xi_1 \xi_1' \right) \circ \mathrm{N}^L_{ F_1 } \right) \left( \xi_2 \circ \mathrm{N}^L_{ F_2 } \right) \left( \eta^{-1} \circ \mathrm{N}^L_k \right)
\]
and
\[
    \widetilde{ \xi } = \left( \left( \xi_1^{ s_1 } \xi_1' \right) \circ \mathrm{N}^L_{ F_1 } \right) \left( \xi_2 \circ \mathrm{N}^L_{ F_2 } \right) \left( \eta^{-1} \circ \mathrm{N}^L_k \right),
\]
where $ s_1 \in \Gal ( F_1/k ) $ denotes the nontrivial element.  Let $F_3$ be the third quadratic extension of $k$ and let $\Sigma_{ L/F_3 }$ be a character of $L^\times$ which extends the character $ \varsigma_{ L/F_3 } $ of $F_3^\times$. 
        
Fix a nontrivial additive character $\psi$ of $k$. Define $ y \in L $ to be the element such that 
\[
    \left( \xi \Sigma_{ L/F_3 } \right) (1 + x) = \psi_L ( yx )
\]
for all $x$ with $ v(x) \ge \mathrm{cond} \left( \xi \Sigma_{ L/F_3 } \right) / 2 $. If $ \mathrm{cond} \left( \xi \Sigma_{ L/F_3 } \right) = 0 $, we take $ y = \varpi^{ -\mathrm{cond} ( \psi ) } $ for some uniformizing parameter $\varpi$ of $k$. Similarly, let $ \widetilde{y} $ be defined by replacing $\xi$ with $ \widetilde{ \xi } $ in the definition of $y$.

\begin{prpd}
    \label{cr2}
    Let $\tau_1, \tau_2$, and $\tau_3$ be irreducible  supercuspidal representations of $\GL_2(k)$ with central characters satisfying $\omega_{ \tau_1 } \omega_{ \tau_2 } \omega_{ \tau_3 } = \eta^2 $. Assume the following conductor conditions hold:
    \[
        \mathrm{cond} \left( \xi \Sigma_{ L/F_3 } \right), \mathrm{cond} \left( \widetilde{ \xi } \Sigma_{ L/F_3 } \right) \ge 2 \mathrm{cond} \left( \Sigma_{ L/F_3 } \right).
    \]
    Then
    \[
        \dim \Hom_{ H_0 } \left( \tau_1 \otimes \tau_2 \otimes \tau_3, \eta \circ \det \right) = 1
    \]
    if and only if 
    \[
        \Sigma_{ L/F_3 } ( y^{-1} \widetilde{y} ) = 1.
    \]
\end{prpd}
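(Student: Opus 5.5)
By Theorem \ref{thm1.4}, applied to $\tau_1'\otimes\tau_2\otimes\tau_3$ (whose central characters have trivial product), the Hom-space is one-dimensional exactly when $\varepsilon_0=\varepsilon(\sigma_1'\otimes\sigma_2\otimes\sigma_3)=1$. The plan is to compute $\varepsilon_0$ as a product of two abelian $\varepsilon$-factors on $L^\times$ and compare them by Proposition \ref{t 1.2}.

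\textbf{Step 1: decompose the triple tensor product.} Write $\sigma_1'=\mathrm{Ind}^k_{F_1}\xi_1(\eta^{-1}\circ\N^{F_1}_k)$ and $\sigma_2=\mathrm{Ind}^k_{F_1}\xi_1'$. Mackey's formula gives
\[
\sigma_1'\otimes\sigma_2=\mathrm{Ind}^k_{F_1}\bigl(\xi_1\xi_1'(\eta^{-1}\circ\N^{F_1}_k)\bigr)\oplus\mathrm{Ind}^k_{F_1}\bigl(\xi_1^{s_1}\xi_1'(\eta^{-1}\circ\N^{F_1}_k)\bigr).
\]
Tensoring with $\sigma_3=\mathrm{Ind}^k_{F_2}\xi_2$ and using that $W_{F_1}W_{F_2}=W_k$ with $W_{F_1}\cap W_{F_2}=W_L$, each summand becomes $\mathrm{Ind}^k_L$ of a character of $L^\times$. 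These characters are exactly $\xi$ and $\widetilde\xi$. Hence
\[
\varepsilon_0=\varepsilon(\mathrm{Ind}^k_L\xi,\psi)\,\varepsilon(\mathrm{Ind}^k_L\widetilde\xi,\psi).
\]

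\textbf{Step 2: rewrite the factors over $L$.} Since $\varsigma_{L/k}=1$ by Lemma \ref{varsigma-11}, we have $\lambda_{L/k}(\psi)^2=\varsigma_{L/k}(-1)=1$. Therefore
\[
\varepsilon_0=\varepsilon(\xi,\psi_L)\,\varepsilon(\widetilde\xi,\psi_L).
\]
Next we check that $\xi$ and $\widetilde\xi$ restricted to $F_3^\times$ equal $\varsigma_{L/F_3}$. This should follow from the central character condition $\omega_{\tau_1}\omega_{\tau_2}\omega_{\tau_3}=\eta^2$ and the formula $\omega=\xi|_{k^\times}\varsigma$. It is a computation with norms: $\N^L_{F_i}$ restricted to $F_3$ factors through $\N^{F_3}_k$. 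I expect that twisting by $\Sigma_{L/F_3}^{-1}$ is exactly what is needed to make the characters $\chi:=\xi\Sigma_{L/F_3}$ and $\widetilde\chi:=\widetilde\xi\Sigma_{L/F_3}$ trivial on $F_3^\times$, with the sign convention to be fixed during this check. This step, verifying precisely which restriction occurs, is the main bookkeeping obstacle.

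\textbf{Step 3: compare the two factors.} Write $\xi=\Sigma_{L/F_3}^{-1}\cdot(\xi\Sigma_{L/F_3})$. The conductor hypothesis allows Proposition \ref{t 1.2} with $\zeta=\Sigma^{-1}_{L/F_3}$, giving
\[
\varepsilon(\xi,\psi_L)=\Sigma_{L/F_3}(y)\,\varepsilon(\chi,\psi_L),
\]
and likewise $\varepsilon(\widetilde\xi,\psi_L)=\Sigma_{L/F_3}(\widetilde y)\,\varepsilon(\widetilde\chi,\psi_L)$.

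\textbf{Step 4: evaluate the $\chi$-factors and conclude.} Apply Proposition \ref{t 1.4} to $L/F_3$, with $\psi_L=(\psi_{F_3})_L$ and a fixed trace-zero $\alpha$. This gives
\[
\varepsilon(\chi,\psi_L)\,\varepsilon(\widetilde\chi,\psi_L)=c^2\,\chi\widetilde\chi(\alpha).
\]
It then remains to show that $c^2\chi\widetilde\chi(\alpha)\Sigma_{L/F_3}(y\widetilde y)=\Sigma_{L/F_3}(y^{-1}\widetilde y)$. The hope is that $\chi\widetilde\chi$ is related to $\Sigma_{L/F_3}^{2}$ times a character of the form $\theta\circ\N^L_{F_1}$ with $\theta\theta^{s_1}$ controlled. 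Also, $c^2$ should be determined by taking $\chi$ trivial together with \eqref{contraepsilon}, since $\varepsilon(\chi^\vee)=\varepsilon(\chi^{-1})=\chi(-1)c^2\chi(\alpha)^{-1}\cdot\ldots$.

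If this bookkeeping becomes intractable, there is a fallback. Use \eqref{contraepsilon} and \eqref{twistepsilon} (with $\phi\in\Gal(L/F_2)$ conjugating $\xi_1$ to $\xi_1^{s_1}$) to express $\varepsilon(\widetilde\chi,\psi_L)$ as $\varepsilon(\chi,\psi_L)^{\pm1}$ up to signs. Then, since $\varepsilon_0=\pm1$ and $\Sigma_{L/F_3}(y^{-1}\widetilde y)=\pm1$, only their equality needs checking.

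In either route, Step 4, pinning down the constant and showing that all extraneous factors cancel to leave exactly $\Sigma_{L/F_3}(y^{-1}\widetilde y)$, is where I expect the main difficulty.
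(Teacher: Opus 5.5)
Your Steps 1--3 are sound and match the paper up to bookkeeping. The paper keeps the constants $\lambda_{L/F_1}(\psi_{F_1})$, which cancel later, while your $\lambda_{L/k}(\psi)^2=\varsigma_{L/k}(-1)=1$ is equally valid. The restriction check does give $\xi|_{F_3^\times}=\widetilde\xi|_{F_3^\times}=\varsigma_{F_2/k}\circ\N^{F_3}_k=\varsigma_{L/F_3}$.

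The gap is in Step 4. Push your product route as far as it goes:
\begin{itemize}
\item $c^2=1$, since your trivial-character trick works.
\item $\chi(\alpha)=\widetilde\chi(\alpha)$, because $\chi/\widetilde\chi=(\xi_1/\xi_1^{s_1})\circ\N^L_{F_1}$ and $\N^L_{F_1}(\alpha)\in k^\times$ when $\tr^L_{F_3}(\alpha)=0$.
\item $\chi(\alpha)^2=\chi(\alpha^2)=1$, since $\alpha^2\in F_3^\times$.
\end{itemize}
So you land on $\varepsilon_0=\Sigma_{L/F_3}(y\widetilde y)$. To reach $\Sigma_{L/F_3}(y^{-1}\widetilde y)$ you still need $\Sigma_{L/F_3}(y)^2=1$, which by your own formulas is equivalent to $\varepsilon(\xi,\psi_L)^2=1$. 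Nothing in the proposal supplies this. The hope about $\chi\widetilde\chi$ never touches $y$, and $\varepsilon_0=\pm1$ from Theorem \ref{thm1.4} only yields $\Sigma_{L/F_3}(y)^2\,\Sigma_{L/F_3}(\widetilde y)^2=1$.

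The fallback aims \eqref{twistepsilon} and \eqref{contraepsilon} at the wrong pair. An element of $\Gal(L/F_2)$ conjugates $\xi_1'$ along with $\xi_1$, so it does not carry $\xi$ to $\widetilde\xi$. Moreover, the ratio of the two factors is exactly the sign the proposition is computing, so ``only their equality needs checking'' just restates the claim.

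The missing idea, which is the first half of the paper's proof, is to apply these identities to each character separately. Your Step 2 gives $\xi\cdot\xi^{\phi_3}=\xi\circ\N^L_{F_3}=\varsigma_{L/F_3}\circ\N^L_{F_3}=1$, i.e.\ $\xi^{-1}=\xi^{\phi_3}$ with $\phi_3$ generating $\Gal(L/F_3)$. (The paper states this self-duality for $\rho|_{F_1}\otimes\xi_1\xi_1'\cong\mathrm{Ind}^{F_1}_L\xi$.) Hence $\varepsilon(\xi,\psi_L)^2=\varepsilon(\xi,\psi_L)\,\varepsilon(\xi^{-1},\psi_L)=\xi(-1)=\varsigma_{L/F_3}(-1)=1$ by Lemma \ref{varsigma-11}, and likewise for $\widetilde\xi$.

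Then $\varepsilon_0=\varepsilon(\xi,\psi_L)\,\varepsilon(\widetilde\xi,\psi_L)=\varepsilon(\xi,\psi_L)/\varepsilon(\widetilde\xi,\psi_L)$. In this ratio the constant $c$ cancels, so it never needs to be determined, and $\chi(\alpha)/\widetilde\chi(\alpha)=1$. What remains is $\Sigma_{L/F_3}(y\widetilde y^{-1})$, which is $\pm1$ and so equals $\Sigma_{L/F_3}(y^{-1}\widetilde y)$.
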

    
\begin{proof}
    We fix a nontrivial additive character $\psi$ of $k$. Recall that $\varepsilon_0$ denotes the epsilon factor
    \[
        \varepsilon \left( \mathrm{Ind}^k_{ F_1 }  \xi_1 \otimes \mathrm{Ind}^k_{ F_1 } \xi_1' \otimes \mathrm{Ind}^k_{ F_2 } \xi_2 \left( \eta^{-1} \circ \mathrm{N}^{ F_2 }_k \right) \right).
    \]
    As in the previous proposition, Mackey's theorem gives
    \[
        \mathrm{Ind}^k_{ F_1 } \xi_1 \otimes \mathrm{Ind}^k_{ F_1 } \xi_1' \cong \mathrm{Ind}^k_{ F_1 } \xi_1 \xi_1' \oplus \mathrm{Ind}^k_{ F_1 } \xi_1^{ s_1 } \xi_1',
    \]
    which implies that
    \[
        \varepsilon_0 = \varepsilon \left( \mathrm{Ind}^k_{ F_1 } \xi_1 \xi_1' \otimes \mathrm{Ind}^k_{ F_2 } \xi_2 \left( \eta^{-1} \circ \N^{F_2}_k \right), \psi \right) \cdot \varepsilon \left( \mathrm{Ind}^k_{ F_1 } \xi_1^{ s_1 } \xi_1' \otimes \mathrm{Ind}^k_{ F_2 } \xi_2 \left( \eta^{-1} \circ \N^{F_2}_k \right), \psi \right).
    \]
    Let $\rho$ denote the representation $ \mathrm{Ind}^k_{ F_2 } \xi_2 \left( \eta^{-1} \circ \mathrm{N}^{ F_2 }_k \right) $. Another application of Mackey's theorem shows that
    \[
        \rho \otimes \mathrm{Ind}^k_{ F_1 } \xi_1 \xi_1' \cong \mathrm{Ind}^k_{ F_1 } \left( \rho|_{ F_1 } \otimes \xi_1 \xi_1' \right)
    \]
    and
    \[
        \rho \otimes \mathrm{Ind}^k_{ F_1 } \xi_1^{ s_1 } \xi_1' \cong \mathrm{Ind}^k_{ F_1 } \left( \rho|_{ F_1 } \otimes \xi_1^{ s_1 } \xi_1' \right),
    \]
    where $ \rho|_{ F_1 } $ stands for the restriction $ \Res^{ W_k }_{ W_{ F_1 } } \rho $. Therefore,
    \[
        \begin{aligned}
            \varepsilon_0 =& \varepsilon \left( \rho|_{ F_1 } \otimes \xi_1 \xi_1', \psi_{ F_1 } \right) \varepsilon \left( \rho|_{ F_1 } \otimes \xi_1^{ s_1 } \xi_1', \psi_{ F_1 } \right) \varsigma_{ F_1/k }^2 (-1)        \\
                          =& \varepsilon \left( \rho|_{ F_1 } \otimes \xi_1 \xi_1', \psi_{ F_1 } \right) \varepsilon \left( \rho|_{ F_1 } \otimes \xi_1^{ s_1 }\xi_1', \psi_{ F_1 } \right).
        \end{aligned}
    \]
    The subsequent proof consists of two parts: first, we  show that
    \[
        \varepsilon \left( \rho|_{ F_1 } \otimes \xi_1 \xi_1', \psi_{ F_1 } \right)^2 = \varepsilon \left( \rho|_{ F_1 } \otimes \xi_1^{ s_1 } \xi_1', \psi_{ F_1 } \right)^2 = 1,
    \]
    and then we explicitly compute the $ \varepsilon_0 $.

    The proof of the first part adapts the method used in the proof of \cite[Lemma 1.5]{Tunnell}. Recall that the product of the central characters of $\tau_i$ for $ i = 1, 2, 3 $ equals $ \eta^2 $. We have
    \begin{equation}
        \label{centralprod}
        \eta^2 = \omega_{ \tau_1 } \omega_{ \tau_2 } \omega_{ \tau_3 } = \left( \xi_1 \xi_1' \xi_2 \right)|_{ k^\times } \varsigma^2_{ F_1/k } \varsigma_{ F_2/k } = \left( \xi_1 \xi_1' \xi_2 \right)|_{ k^\times } \varsigma_{ F_2/k }.
    \end{equation}       
    The contragredient of $ \rho|_{ F_1 } \otimes \xi_1 \xi_1' $ is
    \[
        \begin{aligned}
            \left( \rho|_{ F_1 } \otimes \xi_1 \xi_1' \right)^\vee
            \cong& \det \left( \rho|_{ F_1 } \otimes \xi_1 \xi_1' \right)^{-1} \otimes \left( \rho|_{ F_1 } \otimes \xi_1 \xi_1' \right)\\
                =& \left( \left( \xi_2|_{ k^\times } \varsigma_{ F_2/k } \right)^{-1} \circ \N^{ F_2 }_k \right) \left( \eta^{-1} \circ \N^{L}_k \right) ( \xi_1 \xi_1' )^{-1} \otimes \rho|_{ F_1 }.
        \end{aligned}
    \]
    Let $ \phi_2 \in \Gal( L/k ) $ fix $ F_2 $. Consider the representation $ \left( \rho|_{ F_1 } \otimes \xi_1 \xi_1' \right)^{ \phi_2 } $ twisted by $\phi_2$. Since $ \rho \cong \rho^{ \phi_2 } $ is $\phi_2$-invariant, we have $ \left( \rho|_{ F_1 } \otimes \xi_1 \xi_1' \right)^{ \phi_2 } \cong \rho|_{ F_1 } \otimes ( \xi_1 \xi_1' )^{ \phi_2 } $. Let $ x \in F_1 $. Then $ x \cdot \phi_2(x) = x \cdot s_1(x) = \N^{ F_1 }_k (x) \in k $. Together with the equation $ (\ref{centralprod}) $, this implies
    \[
        \begin{aligned}
            \left( \eta \circ \N^L_k \right) (x) =& \eta^2 ( x \cdot \phi_2 x ) = \left( \left( \xi_1 \xi_1' \xi_2 \right)|_{ k^\times } \varsigma_{ F_2/k } \right) ( x \cdot \phi_2 x )\\
                                                 =& \left( \xi_2|_{ k^\times } \varsigma_{ F_2/k } \right) \left( \N^{ F_2 }_k (x) \right) \cdot \left( \xi_1 \xi_1' \xi_1^{ \phi_2 } \xi_1'^{ \phi_2 } \right) (x).
        \end{aligned}
    \]
    Hence, as characters of $F_1^\times$,
    \[
        \left( \left( \xi_2|_{ k^\times } \varsigma_{ F_2/k } \right) \circ \N^{ F_2 }_k \right) \left( \eta^{-1} \circ \N^{L}_k \right) ( \xi_1 \xi_1' )^{-1} = \xi_1^{ \phi_2 } \xi_1'^{ \phi_2 }.
    \]
    This implies
    \begin{equation}
        \label{rhocongrho}
        \left( \rho|_{ F_1 } \otimes \xi_1 \xi_1' \right)^\vee \cong \left( \rho|_{ F_1 } \otimes \xi_1 \xi_1' \right)^{ \phi_2 }.
    \end{equation}       
    We now compute:
    \[
        \begin{aligned}
             & \varepsilon \left( \rho|_{ F_1 } \otimes \xi_1 \xi_1', \psi_{ F_1 } \right)^2\\
            =& \varepsilon \left( \rho|_{ F_1 } \otimes \xi_1 \xi_1', \psi_{ F_1 } \right) \varepsilon \left( \left( \rho|_{ F_1 } \otimes \xi_1 \xi_1' \right)^{ \phi_2 }, \psi_{ F_1 } \right)                                      & ( \text{by } ( \ref{twistepsilon} ) )                                                          \\
            =& \varepsilon \left( \rho|_{ F_1 } \otimes \xi_1 \xi_1', \psi_{ F_1 } \right) \varepsilon \left( \left( \rho|_{ F_1 } \otimes \xi_1 \xi_1' \right)^\vee, \psi_{ F_1 } \right)                                             & ( \text{by } ( \ref{rhocongrho} ) )                                                            \\
            =& \det \left( \rho|_{ F_1 } \otimes \xi_1 \xi_1' \right) & ( \text{by } ( \ref{contraepsilon} ) )                                                         \\
            =& \left( \left( \xi_1 \xi_1' \xi_2 \right)|_{ k^\times } \eta^{-2} \varsigma_{ F_2/k } \right) (-1)                                                            \\
            =& 1.                                                     & ( \text{by }( \ref{centralprod} ) )
        \end{aligned}
    \]
    By the same reasoning, $ \varepsilon \left( \rho|_{ F_1 } \otimes \xi_1^{ s_1 } \xi_1', \psi_{ F_1 } \right)^2 = 1 $.
    
    Recall that $\xi$ and $\widetilde{\xi}$ denote the characters of $L^\times$
    \[
        \left( \left( \xi_1 \xi_1' \right) \circ \mathrm{N}^L_{ F_1 } \right) \left( \xi_2 \circ \mathrm{N}^L_{ F_2 } \right) \left( \eta^{-1} \circ \mathrm{N}^L_k \right) \text{ and } \left( \left( \xi_1^{ s_1 } \xi_1' \right) \circ \mathrm{N}^L_{ F_1 } \right) \left( \xi_2 \circ \mathrm{N}^L_{F_2} \right) \left( \eta^{-1} \circ \mathrm{N}^L_k \right),
    \]
    respectively. By Mackey's theorem, we obtain the following isomorphisms:
    \[
        \begin{aligned}
            \rho|_{ F_1 } \otimes \xi_1 \xi_1'
            \cong& \mathrm{Ind}^{ F_1 }_{L} \left( \xi_2 \circ \mathrm{N}^L_{ F_2 } \right) \left( \eta^{-1} \circ \mathrm{N}^L_{k} \right) \otimes \xi_1 \xi_1'\\
            \cong& \Ind^{ F_1 }_L \left( \left( \xi_1 \xi_1' \right) \circ \mathrm{N}^L_{ F_1 } \right) \left( \xi_2 \circ \mathrm{N}^L_{ F_2 } \right) \left( \eta^{-1} \circ \mathrm{N}^L_{k} \right) \cong \Ind^{ F_1 }_{L} \xi.
        \end{aligned}
    \]
    It follows that
    \[
        \varepsilon \left( \rho|_{ F_1 } \otimes \xi_1 \xi_1', \psi_{ F_1 } \right) = \varepsilon \left( \Ind^{ F_1 }_{L} \xi, \psi_{ F_1 } \right) = \varepsilon \left( \xi, \psi_L \right) \lambda_{ L/F_1 } \left( \psi_{ F_1 } \right).
    \]
    A parallel computation shows that an analogous identity holds for $ \rho|_{ F_1 } \otimes \xi_1^{ s_1 } \xi_1' $ and $ \widetilde{ \xi } $. Let $ \phi_i \in \Gal( L/F_i ) $ be the automorphism fixing $F_i$ for each $ i = 1, 2 $ and define $ \phi_3 := \phi_1 \phi_2 \in \Gal( L/k ) $. Since $F_3$ is the subfield of $L$ fixed by $\phi_3$, for each $ x \in F_3 $, we have $ x \cdot \phi_1 x = x \cdot \phi_2 x \in k $. Consequently,
    \[
        \begin{aligned}
            \xi (x) =& \left( \xi_1 \xi_1' \right) (x \cdot \phi_1 x ) \cdot \xi_2 ( x \cdot \phi_2 x ) \cdot \eta^{-1} ( x \cdot \phi_1 x \cdot \phi_2 x \cdot \phi_3 x )\\
                    =& ( \xi_1 \xi_1' \xi_2 \eta^{-2} ) ( x \cdot \phi_1 x ) = \varsigma_{ F_2/k }^{-1} ( x \cdot \phi_1 x ). & (\text{by } ( \ref{centralprod} ) )
        \end{aligned}
    \]
    Hence, the character $ \xi|_{ F_3^\times } \left( \varsigma_{ F_2/L } \circ \mathrm{N}^{L}_{ F_3 } \right) $ is trivial. Now, applying Mackey's theorem yields   
    \[
        \mathrm{Res}_{ F_3 } \mathrm{Ind}^k_{ F_2 } 1_{ F_2 } \cong \mathrm{Ind}_{L}^{ F_3 } 1_L,
    \]
    which implies
    \[
        \varsigma_{ F_2/k } \circ \mathrm{N}^{ F_3 }_k = \det \mathrm{Ind}_{L}^{ F_3 } 1_L = \varsigma_{ L/F_3 }.
    \]
    Choose $ \Sigma_{ L/F_3 } $ to be a character of $L^\times$ extending $ \varsigma_{ L/F_3 } $. Then the character $ \xi \Sigma_{ L/F_3 } $ of $L^\times$ is trivial on $F_3^\times$. By Proposition \ref{t 1.4},
    \[
        \varepsilon \left( \xi \Sigma_{ L/F_3 }, \psi_L \right) = c_{ \alpha } \xi( \alpha ) \Sigma_{ L/F_3 } ( \alpha ),
    \]
    where $ \alpha \in L^\times $ satisfies $ \tr^L_{ F_3 } ( \alpha ) = 0 $, and $c_{\alpha}$ is a constant independent of $\xi$ and $ \Sigma_{ L/F_3 } $. Note that $ \mathrm{cond} \left( \xi \Sigma_{ L/{F_3} } \right) \ge 2 \mathrm{cond} \left( \Sigma_{ L/F_3 } \right) $. By Proposition \ref{t 1.2}, we have
    \[
        \varepsilon \left( \xi, \psi_L \right) = \Sigma_{ L/F_3 }(y) \varepsilon \left( \xi \Sigma_{ L/F_3 }, \psi_L \right) = c_{\alpha} \xi ( \alpha ) \Sigma_{ L/F_3 } ( y \alpha ),
    \]
    where $ y\in L $ satisfies $ \xi \Sigma_{ L/F_3 }(1 + x) = \psi_L( yx )$ for all $ x \in L $ satisfying that $ v(x) \ge \mathrm{cond} \left( \xi \Sigma_{ L/F_3 } \right) / 2 $. Similarly,
    \[
        \varepsilon \left( \widetilde{ \xi }, \psi_L \right) = \Sigma_{ L/F_3 } \left( \widetilde{y} \right) \varepsilon \left( \widetilde{ \xi } \Sigma_{ L/F_3 }, \psi_L \right) = c_\alpha \widetilde{ \xi }( \alpha ) \Sigma_{ L/F_3 } \left( \widetilde{y} \alpha \right),
    \]
    where $ \widetilde{y} $ is defined analogously for $ \widetilde{ \xi } $. We thus obtain that
    \[
        \begin{aligned}
            \varepsilon_0 &= \frac{ \varepsilon_0 }{ \varepsilon \left( \rho|_{ F_1 } \otimes \xi_1^{ s_1 } \xi_1', \psi_{ F_1 } \right)^2 } = \frac{ \varepsilon \left( \rho|_{ F_1 } \otimes \xi_1 \xi_1', \psi_{ F_1 } \right) }{ \varepsilon \left( \rho|_{ F_1 } \otimes \xi_1^{ s_1 } \xi_1', \psi_{ F_1 } \right)} = \frac{ \varepsilon \left( \xi, \psi_L \right) }{ \varepsilon \left( \widetilde{ \xi }, \psi_L \right) }\\
                          &= \frac {c_\alpha \xi ( \alpha ) \Sigma_{ L/F_3 } ( \alpha y )}{ c_{\alpha} \widetilde{ \xi } ( \alpha ) \Sigma_{ L/F_3 } ( \alpha \widetilde{y} ) } = \xi_1 \left( \frac{ \alpha \cdot \phi_1 \alpha }{ \phi_2 \alpha \cdot \phi_3 \alpha } \right) \Sigma_{ L/F_3 }( y^{-1} \widetilde{y} )                                                \\
                          &= \Sigma_{ L/F_3 } ( y^{-1} \widetilde{y} ),
        \end{aligned}
    \]
    where the last equality holds because $\alpha+\phi_3\alpha=\phi_1\alpha+\phi_2\alpha=0$.
\end{proof}

Prasad gave a criterion in the case where the fields $F_1$ and $F_2$ are distinct and $ \mathrm{cond} ( \xi_1 ) \ne \mathrm{cond} ( \xi_1' ) $ (see \cite[Proposition 8.10]{Prasad}).

\begin{prpd}[{\cite[Proposition 8.10]{Prasad}}]
    \label{cr3}
    Let $\tau_1$, $\tau_2$, and $\tau_3$ be irreducible supercuspidal representations of $\GL_2(k)$ with the central characters satisfying $ \omega_{ \tau_1 } \omega_{ \tau_2 } \omega_{ \tau_3 } = \eta^{2} $. Suppose that the representations $\tau_1$ and $\tau_2$ correspond to $ \mathrm{Ind}^k_{ F_1 } \xi_1 $ and $ \mathrm{Ind}^k_{ F_1 } \xi_1' $, respectively while $\tau_3$ corresponds to $ \mathrm{Ind}^k_{ F_2 } \xi_2 $, where $F_1, F_2$ are distinct quadratic extensions of $k$, and $\xi_1$, $\xi_1'$, $\xi_2$ are characters of $F_1^\times$, $F_1^\times$, and $F_2^\times$ respectively. If $\mathrm{cond} ( \xi_1 ) \ne \mathrm{cond} ( \xi_2 ) $, then
    \[
        \dim \Hom_{ H_0 } \left( \tau_1 \otimes \tau_2 \otimes \tau_3, \eta \circ \det \right) = 1.
    \]
\end{prpd}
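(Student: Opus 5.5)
The plan is to reduce the twisted statement to Prasad's result \cite[Proposition 8.10]{Prasad}, just as Proposition \ref{cr1} was reduced to \cite[Proposition 8.7]{Prasad}. By Theorem \ref{thm1.4}, applied after twisting, the space $\Hom_{H_0}(\tau_1\otimes\tau_2\otimes\tau_3,\eta\circ\det)$ is one-dimensional exactly when
\[
\varepsilon_0=\varepsilon(\sigma_1\otimes\sigma_2\otimes\sigma_3')=1,
\]
where $\tau_3' = (\eta^{-1}\circ\det)\otimes\tau_3$. So it suffices to show that $\tau_1,\tau_2,\tau_3'$ meet the hypotheses of Prasad's untwisted statement.

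First, I will identify the parameter of $\tau_3'$. It corresponds to
\[
\mathrm{Ind}^k_{F_2}\bigl(\xi_2\,(\eta^{-1}\circ\N^{F_2}_k)\bigr).
\]
Its central character is $\xi_2|_{k^\times}\eta^{-2}\varsigma_{F_2/k}=\omega_{\tau_3}\eta^{-2}$, computed from the formula $\det(\mathrm{Ind}^F_E\xi)=(\xi|_{F^\times})\varsigma_{E/F}$ recalled in the preliminaries. Hence
\[
\omega_{\tau_1}\omega_{\tau_2}\omega_{\tau_3'}=1.
\]
The triple $(\tau_1,\tau_2,\tau_3')$ then consists of supercuspidals with trivial product of central characters, with $k(\tau_1)=k(\tau_2)=F_1\neq F_2=k(\tau_3')$. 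This is precisely the setting of \cite[Proposition 8.10]{Prasad}, with the inducing characters $\xi_1,\xi_1'$ on $F_1$ left unchanged.

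Second, I will check that the conductor hypothesis carries over. The condition as stated concerns the characters of $F_1^\times$ defining $\tau_1$ and $\tau_2$, namely $\mathrm{cond}(\xi_1)\ne\mathrm{cond}(\xi_1')$ as in the preceding remark. Since the twist only modifies the character on $F_2$, this condition is unaffected.

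If the condition is instead read literally as $\mathrm{cond}(\xi_1)\neq\mathrm{cond}(\xi_2)$, the twist does change the character on $F_2$ into $\xi_2(\eta^{-1}\circ\N^{F_2}_k)$. In that case I would move the twist onto $\tau_1$ or $\tau_2$ instead, which is possible because the three expressions for $\varepsilon_0$ coincide. I would then verify that Prasad's hypothesis is stated invariantly under such twists, or refer to how it enters his proof.

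Prasad's proposition then gives $\varepsilon_0=1$, and Theorem \ref{thm1.4} gives dimension one. The main obstacle is bookkeeping: matching exactly which conductor inequality Prasad assumes, and making sure that the chosen twist preserves it. The rest is the same formal reduction already carried out for Proposition \ref{cr1}.
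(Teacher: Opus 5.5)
Your proposal is correct and matches the paper. The paper simply quotes Prasad's Proposition 8.10 and relies on the same reduction as in the proof of Proposition \ref{cr1}: twist one factor by $\eta^{-1}\circ\det$ so that the central characters multiply to $1$, then apply Prasad's result together with Theorem \ref{thm1.4}. The conductor bookkeeping you flag is harmless: twisting $\tau_2$ changes only $\xi_1'$, so it preserves the literal hypothesis $\mathrm{cond}(\xi_1)\neq\mathrm{cond}(\xi_2)$ with nothing to check, and twisting $\tau_3$, as you do, preserves the reading $\mathrm{cond}(\xi_1)\neq\mathrm{cond}(\xi_1')$.
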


\subsection{Case $(F,F,F)$}

Suppose that the representation $\tau_i$ corresponds to $ \mathrm{Ind}^k_F \xi_i $ for each $ i = 1, 2, 3 $, where $\xi_i$, for $ i = 1, 2, 3 $ are characters of $F^\times$. Define $\mu_{ij}$ for $ i, j = 0, 1 $ by
\[
    \mu_{ij} = \xi_1^{ s_i } \xi_2^{ s_j } \xi_3 \left( \eta^{-1} \circ { \N^F_k } \right),
\]
where $ \Gal( F/k ) = \{ s_0 ( = 1 ), s_1 \} $. Let $\Sigma_{F/k}$ be a character of $F^\times$ which extends the character $\varsigma_{F/k}$ of $k^\times$.
        
For each $ i, j = 0, 1 $, define $ y_{ij} \in F $ by the relation:
\[
    \left( \mu_{ij} \Sigma_{ F/k } \right) (1 + x) = \psi_F ( y_{ij} x )
\]
for all $ x \in F $ with $ v(x) \ge \mathrm{cond} \left( \mu_{ij} \Sigma_{ F/k } \right) / 2 $. If $ \mathrm{cond} \left( \mu_{ij} \Sigma_{ F/k } \right) = 0 $ for some $ i, j = 0, 1 $, we take $ y_{ij} = \varpi^{ -\mathrm{cond}( \psi ) } $ for a uniformizing parameter $\varpi$ of $k$.

\begin{prpd}
    \label{cr4}
    Let $\tau_1$, $\tau_2$, and $\tau_3$ be irreducible supercuspidal representations of $\GL_2(k)$ with central characters satisfying $ \omega_{ \tau_1 } \omega_{ \tau_2 } \omega_{ \tau_3 } = \eta^2$. Assume the following conductor conditions hold:
    \[
        \mathrm{cond} \left( \mu_{ij} \Sigma_{ F/k } \right) \ge 2 \mathrm{cond} \left( \Sigma_{ F/k } \right), \quad i, j = 0, 1.
    \]
    Then
    \[
        \dim \Hom_{ H_0 } \left( \tau_1 \otimes \tau_2 \otimes \tau_3, \eta \circ \det \right) = 1
    \]
    if and only if
    \[
        \Sigma_{ F/k } \left( \frac{ y_{00} \cdot y_{11} }{ y_{10} \cdot y_{01} } \right) = 1.
    \]
\end{prpd}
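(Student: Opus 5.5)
We follow the template of the proof of Proposition \ref{cr2}. By Theorem \ref{thm1.4}, applied to $\tau_1\otimes\tau_2\otimes\tau_3'$, the trilinear form exists iff
\[
\varepsilon_0=\varepsilon\bigl(\mathrm{Ind}^k_F\xi_1\otimes\mathrm{Ind}^k_F\xi_2\otimes\mathrm{Ind}^k_F\xi_3(\eta^{-1}\circ\N^F_k)\bigr)=1 .
\]
First we decompose the triple tensor product with Mackey's theorem. We have $\mathrm{Ind}^k_F\xi_1\otimes\mathrm{Ind}^k_F\xi_2\cong\mathrm{Ind}^k_F\xi_1\xi_2\oplus\mathrm{Ind}^k_F\xi_1^{s_1}\xi_2$. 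Tensoring each summand with $\mathrm{Ind}^k_F\xi_3(\eta^{-1}\circ\N^F_k)$ and applying Mackey again gives, up to Galois conjugation of the characters (which does not change induced representations), the decomposition $\bigoplus_{i,j\in\{0,1\}}\mathrm{Ind}^k_F\mu_{ij}$. Hence
\[
\varepsilon_0=\prod_{i,j}\varepsilon(\mu_{ij},\psi_F)\,\lambda_{F/k}(\psi)^4=\prod_{i,j}\varepsilon(\mu_{ij},\psi_F),
\]
because $\lambda_{F/k}(\psi)^4=\varsigma_{F/k}(-1)^2=1$.

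Next we check that each $\mu_{ij}\Sigma_{F/k}$ is trivial on $k^\times$. For $x\in k^\times$, $\mu_{ij}(x)=(\xi_1\xi_2\xi_3)(x)\eta^{-2}(x)$. The central character relation gives $\eta^2=\omega_{\tau_1}\omega_{\tau_2}\omega_{\tau_3}=(\xi_1\xi_2\xi_3)|_{k^\times}\varsigma_{F/k}^3=(\xi_1\xi_2\xi_3)|_{k^\times}\varsigma_{F/k}$. So $\mu_{ij}|_{k^\times}=\varsigma_{F/k}^{-1}=\varsigma_{F/k}$, and $\mu_{ij}\Sigma_{F/k}$ is trivial on $k^\times$, using $\varsigma^2=1$.

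Proposition \ref{t 1.4}, with a fixed $\alpha\in F^\times$ of trace zero, then gives $\varepsilon(\mu_{ij}\Sigma_{F/k},\psi_F)=c\,\mu_{ij}(\alpha)\Sigma_{F/k}(\alpha)$. Using the conductor hypothesis, Proposition \ref{t 1.2} (with $\zeta=\Sigma_{F/k}^{-1}$ and the character $\mu_{ij}\Sigma_{F/k}$) gives $\varepsilon(\mu_{ij},\psi_F)=\Sigma_{F/k}(y_{ij})\,\varepsilon(\mu_{ij}\Sigma_{F/k},\psi_F)=c\,\mu_{ij}(\alpha)\Sigma_{F/k}(\alpha y_{ij})$.

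To avoid controlling $c^4$ and $\Sigma(\alpha)^4$ directly, we imitate the trick in Proposition \ref{cr2} and take a ratio. By the argument there (twisting by $s_1$ via (\ref{twistepsilon}), an identification of $\mu_{ij}^\vee$ with a Galois conjugate, and (\ref{contraepsilon})), we expect $\varepsilon(\mu_{ij},\psi_F)^2$ to be a sign independent of $(i,j)$. More precisely, we expect $\varepsilon(\mu_{ij},\psi_F)\varepsilon(\mu_{ij}^{s_1},\psi_F)=\mu_{ij}(-1)$ with $\mu_{ij}^{s_1}\cdot\mu_{ij}=\mu_{ij}|_{k^\times}\circ\N=\varsigma_{F/k}\circ\N$; establishing this cleanly is one check. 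Granting it, we write
\[
\varepsilon_0=\frac{\varepsilon(\mu_{00})\varepsilon(\mu_{11})}{\varepsilon(\mu_{10})\varepsilon(\mu_{01})}\cdot\varepsilon(\mu_{10})^2\varepsilon(\mu_{01})^2 ,
\]
and show that the last factor equals $1$, using $\mu_{ij}(-1)=\varsigma_{F/k}(-1)$ for all $i,j$, so that the squares multiply to $\varsigma_{F/k}(-1)^2=1$. The ratio equals
\[
\frac{\mu_{00}\mu_{11}}{\mu_{10}\mu_{01}}(\alpha)\cdot\Sigma_{F/k}\!\left(\frac{y_{00}y_{11}}{y_{10}y_{01}}\right),
\]
since the constants $c$ and the factors $\Sigma(\alpha)$ cancel.

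Finally, $\frac{\mu_{00}\mu_{11}}{\mu_{10}\mu_{01}}=\frac{\xi_1\xi_2\,\xi_1^{s_1}\xi_2^{s_1}}{\xi_1^{s_1}\xi_2\,\xi_1\xi_2^{s_1}}=1$, so $\varepsilon_0=\Sigma_{F/k}(y_{00}y_{11}/(y_{10}y_{01}))$, which gives the claim.

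The main obstacle is the sign bookkeeping for the squares $\varepsilon(\mu_{ij},\psi_F)^2$. For characters of $F^\times$ (as opposed to the two-dimensional $\rho|_{F_1}$ in Proposition \ref{cr2}) we must verify that $\mu_{ij}^\vee$ is the $s_1$-conjugate up to a twist by $\varsigma\circ\N$, and that this twist does not introduce a spurious sign. If that fails, we would instead compute $c^4\Sigma_{F/k}(\alpha)^4\prod\mu_{ij}(\alpha)$ directly, using $\alpha^{s_1}=-\alpha$ and the fact that $\varepsilon_0=\pm1$ from Theorem \ref{thm1.4}.
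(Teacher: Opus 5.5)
Your proposal is correct and is essentially the paper's proof. It uses the Mackey decomposition into $\bigoplus_{i,j}\mathrm{Ind}^k_F\mu_{ij}$, then a squaring argument to turn the product of the four factors $\varepsilon(\mu_{ij},\psi_F)$ into the cross-ratio, and then Propositions \ref{t 1.2} and \ref{t 1.4}. The check you left open does go through. Since $\varsigma_{F/k}$ is trivial on $\N^F_k(F^\times)$, you get $\mu_{ij}^{s_1}=\mu_{ij}^{-1}$, and then (\ref{twistepsilon}) and (\ref{contraepsilon}) give $\varepsilon(\mu_{ij},\psi_F)^2=\mu_{ij}(-1)=\varsigma_{F/k}(-1)$. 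This is a per-character version of the paper's step $\varepsilon(\rho|_F\otimes\xi_1^{s_1}\xi_2,\psi_F)^2=1$, where $\rho|_F\otimes\xi_1^{s_1}\xi_2\cong\mu_{10}\oplus\mu_{01}^{s_1}$.
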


\begin{proof}
    The proof is analogous to that of Proposition \ref{cr2}, so we omit the details and state the conclusion directly.
    \begin{itemize}
        \item The condition on central characters $ \omega_{ \tau_1 } \omega_{ \tau_2 } \omega_{ \tau_3 } = \eta^2 $ implies that
        \begin{equation}
            \label{centralprod3}
            ( \xi_1 \xi_2 \xi_3 )|_{ k^\times } \eta^{-2} \varsigma_{ F/k } = 1.
        \end{equation}
        \item Let $\rho$ denote the representation $ \mathrm{Ind}^k_F \xi_3 \eta^{-1} \left( \mathrm{N}^F_k \right) $. Then we have the following isomorphisms:
        \[
            \begin{aligned}
                \mathrm{Ind}^k_F \xi_1 \otimes \mathrm{Ind}^k_F \xi_2 \otimes \rho
                \cong& \left( \mathrm{Ind}^k_F \xi_1 \xi_2 \otimes \rho \right) \oplus \left( \mathrm{Ind}^k_F \xi_1^{ s_1 } \xi_2 \otimes \rho \right)          \\
                \cong& \mathrm{Ind}^k_F \left( \rho|_F \otimes \xi_1 \xi_2 \right) \oplus \mathrm{Ind}^k_F \left( \rho|_F \otimes \xi_1^{ s_1 } \xi_2 \right)\\
                \cong& \bigoplus_{ i, j = 0, 1 } \mathrm{Ind}^k_F \mu_{ij}.
            \end{aligned}
        \]
        \item By $ ( \ref{centralprod3} ) $, we have the isomorphism $ ( \rho|_F \otimes \xi_1 \xi_2 )^\vee \cong ( \rho|_F \otimes \xi_1 \xi_2 )^{ s_1 } $, which implies
        \[
            \varepsilon \left( \rho|_F \otimes \xi_1 \xi_2, \psi_F \right)^2 = \varepsilon \left( \rho|_F \otimes \xi_1^{ s_1 } \xi_2, \psi_F \right)^2 = 1.
        \]
        \item Set $ \varepsilon_0 = \varepsilon \left( \mathrm{Ind}^k_F \xi_1 \otimes \mathrm{Ind}^k_F \xi_2 \otimes \rho \right) $. We have the following equations for $\varepsilon$-factors:
        \[
            \begin{aligned}
                \varepsilon_0 =& \varepsilon \left( \rho|_F \otimes \xi_1 \xi_2, \psi_F \right) \varepsilon \left( \rho|_F \otimes \xi_1^{ s_1 } \xi_2, \psi_F \right)\\
                              =& \frac{ \varepsilon \left( \rho|_F \otimes \xi_1 \xi_2, \psi_F \right) }{ \varepsilon \left( \rho|_F \otimes \xi_1^{ s_1 } \xi_2, \psi_F \right) } = \frac{ \varepsilon \left( \mu_{00}, \psi_F \right) \varepsilon \left( \mu_{11}, \psi_F \right) }{ \varepsilon \left( \mu_{01}, \psi_F \right) \varepsilon \left( \mu_{10}, \psi_F \right) }.
            \end{aligned}
        \]
    \end{itemize}

    We now compute $ \varepsilon( \mu_{ij}, \psi_F ) $ for each $ i, j = 0, 1 $ using Proposition \ref{t 1.2} and Proposition \ref{t 1.4}. Choose $\Sigma_{F/k}$ to be a character of $F^\times$ extending the character $\varsigma_{F/k}$ of $k^\times$. Then by (\ref{centralprod3}), the restriction of the character $\mu_{ij}\Sigma_{F/k}$ to $k^\times$ is trivial for each $ i, j = 0, 1 $:
    \[
        ( \mu_{ij} \Sigma_{F/k} )|_{ k^\times } =( \xi_1 \xi_2 \xi_3 )|_{ k^\times } \varsigma_{ F/k } \eta^{-2} = 1.
    \]
    Applying Proposition \ref{t 1.4} gives that 
    \[
        \varepsilon \left( \mu_{ij} \Sigma_{ F/k }, \psi_F \right) = c_{ \alpha } \mu_{ij}( \alpha ) \Sigma_{ F/k }( \alpha ), \quad i, j = 0, 1,
    \]
    where $ \alpha \in F $ has trace zero over $k$ and $c_\alpha$ is a constant independent of $\mu_{ij}$ and $\Sigma_{F/k}$. By Proposition \ref{t 1.2} and the hypothesis on conductors, we obtain
    \[
        \varepsilon \left( \mu_{ij}, \psi_F \right) = \Sigma_{F/k}( y_{ij} ) \varepsilon \left( \mu_{ij} \Sigma_{ F/k }, \psi_F \right),
    \]
    where for each $ i, j = 0, 1 $, $ y_{ij} \in F $ satisfies
    \[
        \left( \mu_{ij} \Sigma_{F/k} \right) (1 + x) = \psi_F( y_{ij} x )
    \]
    for all $ x \in F $ with $ v(x) \ge \mathrm{cond} \left( \mu_{ij} \Sigma_{ F/k } \right) / 2 $. Therefore,
    \[  
        \varepsilon_0 = \frac{ \varepsilon \left( \mu_{00}, \psi_F \right) \varepsilon \left( \mu_{11}, \psi_F \right) }{ \varepsilon \left( \mu_{01}, \psi_F \right) \varepsilon \left( \mu_{10}, \psi_F \right) } = \frac{ \mu_{00} \mu_{11}( \alpha ) \Sigma_{ F/k }( y_{00} y_{11} ) }{ \mu_{10} \mu_{01} ( \alpha ) \Sigma_{ F/k }( y_{10} y_{01} ) } = \Sigma_{ F/k } \left( \frac{ y_{00} \cdot y_{11} }{ y_{10} \cdot y_{01} } \right).
    \]
\end{proof}

\section*{Acknowledgements}

X. Wang was supported in part by National Key R \& D Program of China (No. 2022YFA1005300). The author sincerely thanks his supervisor, Dongwen Liu, for proposing the problem that guided this study.
The author would like to thank the referee for his/her careful reading and helpful comments, which led to a substantial improvement of the paper.

\end{document}